\newtheorem{theorem}{Theorem}[section]
\newtheorem{lemma}[theorem]{Lemma}
\newtheorem{example}[theorem]{Example}
\newtheorem{corollary}[theorem]{Corollary}
\newtheorem{proposition}[theorem]{Proposition}
\newcommand{\lex}{\,\overrightarrow{\times}\,}
\newcommand{\Ker}{\mbox{\rm Ker}}
\newcommand{\Rad}{\mbox{\rm Rad}}
\newcommand{\Infinit}{\mbox{\rm Infinit}}
\newcommand{\RDP}{\mbox{\rm RDP}}
\begin{document}
\title[Lexicographic Effect Algebras]{ Lexicographic Effect Algebras}
\author[Anatolij Dvure\v{c}enskij]{Anatolij Dvure\v{c}enskij$^{1,2}$}
\date{}%
\maketitle
\begin{center}  \footnote{Keywords: Effect-algebra, the Riesz Decomposition Property, po-group, strong unit, lexicographic product, ideal, retractive ideal, $(H,u)$-perfect  effect algebra, lexicographic effect algebra, strong $(H,u)$-perfect effect algebra

AMS classification: 06D35, 03G12

This work was supported by  the Slovak Research and Development Agency under contract APVV-0178-11,  grant VEGA No. 2/0059/12 SAV, and
CZ.1.07/2.3.00/20.0051.
 }
Mathematical Institute,  Slovak Academy of Sciences,\\
\v Stef\'anikova 49, SK-814 73 Bratislava, Slovakia\\
$^2$ Depart. Algebra  Geom.,  Palack\'{y} University\\
17. listopadu 12, CZ-771 46 Olomouc, Czech Republic\\

E-mail: {\tt dvurecen@mat.savba.sk}
\end{center}

\begin{abstract}
In the paper we investigate a class of effect algebras which can be represented in the form of the lexicographic product $\Gamma(H\lex G,(u,0))$, where $(H,u)$ is an Abelian unital po-group and $G$ is an Abelian directed po-group. We study algebraic conditions when an effect algebra is of this form. Fixing a unital po-group $(H,u)$, the category of strong $(H,u)$-perfect effect algebra is introduced and it is shown that it is categorically equivalent to the category of directed po-group with interpolation. We show some representation theorems including a subdirect product representation by antilattice lexicographic effect algebras.
\end{abstract}

\section{Introduction}

A fundamental paper of \cite{FoBe} has introduced effect algebras as partial algebras with a primary operation $+$, where $a+b$ denotes the disjunction of two mutually excluding events $a$ and $b.$ The main idea was to describe algebraically an appropriate model for the so-called POV-measures (positive operator valued measures) in the effect algebra $\mathcal E(H)$ of Hermitian operators between the zero and the identity operators of a real, complex or quaternionic Hilbert space $H.$ In the last two decades, they describe an important class of so-called quantum structures which generalize Boolean algebras, orthomodular posets and orthomodular lattices, and orthoalgebras. For more information on effect algebras we recommend the book \cite{DvPu}. Effect algebras are studying algebras with two-valued reasoning as well as with many-valued features, where also some ideas of fuzzy approaches are used as well. Today effect algebras are an important tool for modeling processes of quantum mechanical measurement.

An important property of effect algebras is the Riesz Decomposition Property (RDP for short) which roughly speaking means an ability to perform a refinement of any two finite decompositions. It allows to represent an effect algebra as an interval in an Abelian unital po-group with interpolation (equivalently with RDP).
Nevertheless RDP fails for $\mathcal E(H)$, $\mathcal E(H)$ can be covered by blocks such that every block is an effect algebra with RDP, cf. \cite{Pul}.
An important class of effect algebras is constructed from MV-algebras. They are distributive lattices with RDP, and every two elements $a,b$ are compatible, i.e., there are three elements $a_1,b_1,c$ such that $a=a_1+c$, $b=b_1+c$ and $a_1+b_1+c$ is defined.

Effect algebras are not necessarily lattice ordered, but in important cases, they are antilattices, i.e. only comparable elements have $a\vee b$ and $a\wedge b$. The antilattices are generalizations of linearly ordered structures. In view of the famous Kadison's Antilattice Theorem \cite[Thm 58.4]{LuZa} we have: if $\mathcal B(H)$ is the set of all Hermitian operators of a Hilbert space $H$, then $\mathcal B(H)$ is an antilattice po-group whenever $H$ is a complex Hilbert space of dimension at least 2.

The aim of the paper is to study effect algebras of the form $\Gamma(H\lex G,(u,0))$, where $(H,u)$ is an Abelian unital po-group with RDP and $G$ is a directed po-group with RDP, we call then also lexicographic effect algebras. Roughly speaking, every effect algebra with RDP is of this form while if $E=\Gamma(H,u)$, then $E \cong \Gamma(H\lex G,(u,0))$, where $G=O$ is the null group. For us it will be interesting to exhibit situations when $G$ is not necessarily the zero group.

This problem was firstly investigated for MV-algebra in \cite{DiLe1} introducing so-called perfect MV-algebras; they are represented in the form $\Gamma(\mathbb Z\lex G,(1,0))$, where $\mathbb Z$ is the group of integers, and $G$ is an $\ell$-group. Perfect effect algebras are of the form $\Gamma(\mathbb Z\lex G,(1,0))$, $G$ is a directed po-group with RDP, and they were studied in \cite{177}. $n$-perfect pseudo MV-algebras were studied in \cite{Dv08} in the form $\Gamma(\frac{1}{n}\mathbb Z \lex G,(1,0))$. Recently, there appeared a series of papers where similar structures connected with the lexicographic product were studied, see e.g. \cite{DFL, 264, DvKr, DvKo, 264, DXY}.

In the paper we concentrate to exhibit algebraic conditions when effect algebras are intervals in lexicographic product of two po-groups. For this aim, we fix a unital po-group $(H,u)$ with RDP and introduce $(H,u)$-perfect effect algebras and strongly $(H,u)$-perfect ones. The latter ones are exactly of the form of a lexicographic product. We show that the category of strong $(H,u)$-perfect effect algebras with RDP are categorically equivalent with the category of directed po-groups with RDP. We show an important role of lexicographic ideals for our task. We describe cases when our $(H,u)$-perfect algebras are antilattices and when they are a subdirect product of antilattice lexicographic effect algebras.

The paper is organized as follows. Section 2 gathers the basic notions and results on effect algebras and po-groups. Section 3 describes lexicographic product, lexicographic effect algebras, and lexicographic ideals. In Section 4, we define $(H,u)$-perfect effect algebras and in Section 5, a stronger form of them, strong $(H,u)$-perfect effect algebras. A representation of strong $(H,u)$-perfect effect algebras in the form of lexicographic product is proved here. We show also the categorical equivalence of the category of strong $(H,u)$-perfect effect algebras with RDP to the category directed po-groups with RDP. In Section 6, we present some representation theorems starting with local effect algebras with strict and retractive ideal and finishing with a representation of lexicographic effect algebra with RDP as a subdirect product of antilattice lexicographic ideals with RDP.

\section{The Elements of Effect Algebras}

Following to \cite{FoBe}, we say that an {\it effect algebra} is  a partial algebra $(E;+,0,1)$ with a partially defined operation $+$ and with two constant
elements $0$ and $1$  such that, for all $a,b,c \in E$,
\begin{enumerate}

\item[(i)] $a+b$ is defined in $E$ if and only if $b+a$ is defined, and in
such a case $a+b = b+a;$

 \item[(ii)] $a+b$ and $(a+b)+c$ are defined if and
only if $b+c$ and $a+(b+c)$ are defined, and in such a case $(a+b)+c
= a+(b+c);$

 \item[(iii)] for any $a \in E$, there exists a unique
element $a^- \in E$ such that $a+a^-=1;$

 \item[(iv)] if $a+1$ is defined in $E$, then $a=0.$
\end{enumerate}

If we define $a \le b$ if and only if there exists an element $c \in
E$ such that $a+c = b$, then $\le$ is a partial ordering on $E$, and
we write $c:=b-a;$ then $a^- = 1 - a$ for any $a \in E.$ As a basic source of information about effect algebras we can recommend the monograph \cite{DvPu}. An effect algebra is not necessarily a lattice ordered set.

Let $E$ and $F$ be two effect algebras. A mapping $h:E\to F$ such that (i) $h(1)=1$, and (ii) if $a+b$ is defined in $E$, then $h(a)+h(b)$ is defined in $F$ and $h(a+b)=h(a)+h(b)$ is said to be a {\it homomorphism}. A bijective mapping $h:E\to F$ such that $h$ and $h^{-1}$ are homomorphisms is said to be an {\it isomorphism}; $E$ and $F$ are {\it isomorphic} and we write $E \cong F$.

A subset $F$ of $E$ is an {\it effect subalgebra} of $E$ if (i) $1\in F$, (ii) if $a \in F$, then $a^-\in F$, and (iii) if $a,b \in F$ and $a+b$ is defined in $E$, then $a+b \in F$.

A large class of effect algebras are intervals in Abelian po-groups. We remind that a {\it po-group} is a group $(G;+,0)$ written additively and endowed with a partial order $\le$ such that if $g\le h$, then $k+g+f\le k+h+f$ for all $k,f \in G$. We denote by $G^+=\{g\in G: g \ge 0\}$  and $G^-=\{g \in G: g\le 0\}$ the positive cone and negative cone, respectively, of $G$. If $G$ is a lattice under the partial order $\le$, we call it an $\ell$-{\it group} (or a {\it lattice ordered group}).
An element $u \in G^+$ is a {\it strong unit} of a po-group $G$ if, for any $g \in G$, there is an integer $n \ge 0$ such that $g\le nu.$ The pair $(G,u),$ where $u$ is a fixed strong unit of $G,$ is said to be a {\it unital po-group}. We will deal mainly with Abelian po-groups. For more information on po-groups and $\ell$-groups we recommend to consult the books \cite{Fuc, Gla, Goo}.

Now let $u$ be an element of $G^+$ and set $\Gamma(G,u)=[0,u]:=\{g\in G: 0 \le g \le u\}$. Then $(\Gamma(G,u);+,0,u)$ is an effect algebra, where $a+b$ is defined for $a,b \in \Gamma(G,u)$ iff $a\le u-b$, and in such a case, $a+b$ is equal to the original group addition.  An effect algebra $E$ is said to be an {\it interval effect algebra} if there is an Abelian unital po-group $(G,u)$ such that $E\cong \Gamma(G,u)$. We remind that a po-group $H$ is {\it Archimedean} provided that whenever $x,y \in H$ such that $nx \le y$ for all integers $n \ge 1$, then $x\le 0$.

We note that an Abelian po-group $(G;\le)$ is an {\it interpolation group} (or $G$ has the {\it Riesz Interpolation Property} (RIP for short), or simply $G$ has interpolation), whenever for $g_1,g_2, h_1,h_2 \in G$ such that $g_1,g_2 \le h_1, h_2$, there is an element $f \in G$ such that $g_1,g_2\le f \le h_1,h_2$. By \cite[Prop 2.1]{Goo}, an equivalent definition is, for $a_1,a_2,b_1,b_2 \in G^+$ such that $a_1+a_2=b_1+b_2$, there are four elements $c_{11},c_{12},c_{21},c_{22}\in G^+$ such that $a_1 = c_{11}+c_{12},$ $a_2= c_{21}+c_{22},$ $b_1= c_{11} + c_{21}$ and $b_2= c_{12}+c_{22}$; we call this property the {\it Riesz Decomposition Property} and we write RDP for short. If in the definition of RDP for po-groups, we change $G^+$ to an effect algebra $E$, we say that $E$ satisfies the RDP.

For non-commutative po-groups, in \cite{DvVe1, DvVe2}, there are introduced other types of the Riesz Decomposition Property. We say that a po-group $G$ satisfies (i) RDP$_1$ if it satisfies RDP and, for $c_{12}$ and $c_{21}$, we have $0\le x\le c_{12}$ and $0\le y \le c_{21}$ imply  $x+y=y+x$; (ii) RDP$_2$, if it satisfies RDP and, for $c_{12}$ and $c_{21}$, we have $c_{12}\wedge c_{21}=0.$

Important linearly ordered groups $\mathbb R$, the group of reals, and $\mathbb Z$, the group of integers, have RDP.

The basic representation theorem of effect algebras is the following result by \cite{Rav}:

\begin{theorem}\label{th:2.1}
If $ E$ is an effect algebra with \RDP, there exists a unique Abelian unital po-group with interpolation $(G,u)$ (up to isomorphism of unital po-groups)  such that $ E\cong  {\Gamma}(G,u).$
\end{theorem}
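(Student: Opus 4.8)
The plan is to construct $G$ as the Grothendieck group (group of differences) of a commutative monoid built from the combinatorics of the partial addition of $E$, with $u$ the class of the top element $1$. Concretely, I would let $\mathcal W$ be the set of all finite sequences $\vec a=(a_1,\dots,a_m)$ of elements of $E$ (the empty sequence included), and say that $\vec c$ \emph{refines} $\vec a$ if $\vec c$ can be cut into consecutive blocks $\vec c=\vec c_1\cdots\vec c_m$ such that for each $i$ the sum of the entries of $\vec c_i$ is defined in $E$ and equals $a_i$. Two sequences are declared equivalent, $\vec a\sim\vec b$, when they possess a common refinement. Concatenation then descends to an associative, commutative operation on $M:=\mathcal W/\!\sim$ with the empty class as neutral element.

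First I would check that $\sim$ is a congruence. Reflexivity and symmetry are immediate; transitivity is the first place RDP is indispensable: given common refinements $\vec c$ of $(\vec a,\vec b)$ and $\vec e$ of $(\vec b,\vec d)$, both refine $\vec b$, and RDP (applied repeatedly to split each entry of $\vec b$ simultaneously according to the two block structures) produces a sequence refining both $\vec c$ and $\vec e$, hence a common refinement of $\vec a$ and $\vec d$; compatibility of $\sim$ with concatenation is routine. Next I would establish, again from RDP of $E$, that $M$ is cancellative, conical (if $\vec a\,\vec b\sim(\,)$ then both are empty, since the empty word has only itself as a refinement), and satisfies the Riesz decomposition property for the monoid $M$. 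Cancellativity is the technical crux: from a common refinement witnessing $\vec a\,\vec c\sim\vec b\,\vec c$ one must use RDP to ``match up'' the two copies of the $\vec c$-part and peel them off, leaving a common refinement of $\vec a$ and $\vec b$.

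With $M$ cancellative and conical I would pass to its group of differences $G=\{x-y:x,y\in M\}$, into which $M$ embeds; setting $G^+$ to be the image of $M$ makes $G$ an Abelian po-group, and the monoid RDP transfers verbatim to RDP of $G^+$, so by \cite[Prop 2.1]{Goo} $(G,\le)$ is an interpolation group. Put $u:=[(1)]$. The map $\iota:E\to G$, $a\mapsto[(a)]$, is injective (if $(a)\sim(b)$ their common refinement sums to both $a$ and $b$, so $a=b$) and \emph{full} for the addition: $\iota(a)+\iota(b)=\iota(c)$ holds in $G$ iff $(a,b)\sim(c)$ iff $a+b$ is defined in $E$ and equals $c$. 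To see $u$ is a strong unit and $E\cong\Gamma(G,u)$, note $a\le1$ in $E$ gives $\iota(a)\le u$, and conversely for $0\le g\le u$ both $g$ and $u-g$ lie in $M$ with $g+(u-g)=u=[(1)]$; representing $g$ by a word $\vec g$, the relation $\vec g\,\vec h\sim(1)$ forces the entries of $\vec g$ to have a sum $a\in E$ with $a\le1$, and then $\vec g\sim(a)$, i.e. $g=\iota(a)$. Hence $[0,u]=\iota(E)$, so $\iota$ is an effect-algebra isomorphism onto $\Gamma(G,u)$; finiteness of the words shows every element of $G^+$ is a finite sum of elements of $[0,u]$, so $u$ is a strong unit.

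Finally, for uniqueness suppose $\phi:E\xrightarrow{\ \cong\ }\Gamma(G',u')$ with $(G',u')$ an Abelian unital interpolation po-group. Since $\phi$ carries the partial addition of $E$ to the restriction of the addition of $G'$, the assignment $[\vec a]\mapsto\sum_i\phi(a_i)$ is a well-defined monoid homomorphism $M\to G'^+$ (equivalent words have equal images because refinements preserve sums in $G'$), and by the universal property of the group of differences it extends to a group homomorphism $\Psi:G\to G'$ with $\Psi(u)=u'$. Here RDP of $G'$ is used once more: because $u'$ is a strong unit, every positive element $x$ of $G'$ satisfies $x\le nu'$ and hence, by Riesz decomposition, is a sum of $n$ elements of $[0,u']=\phi(E)$; thus $G'$ is generated as a group by $\phi(E)$, making $\Psi$ surjective, while injectivity follows by running the same construction for $\Gamma(G',u')\cong E$ to obtain the inverse homomorphism. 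Therefore $(G,u)\cong(G',u')$ as unital po-groups. I expect the cancellativity of $M$ and the recovery $[0,u]=\iota(E)$---both powered entirely by RDP---to be the main obstacles; everything else is bookkeeping.
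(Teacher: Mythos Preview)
The paper does not prove Theorem~\ref{th:2.1} at all: it is quoted as an external result due to Ravindran \cite{Rav}, stated without proof and used as a black box throughout. So there is no ``paper's own proof'' to compare against.

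That said, your outline is a faithful reconstruction of the standard argument (essentially the one in \cite{Rav} and, in a more general setting, in \cite{DvVe2}): form the free commutative monoid on $E$ modulo the congruence generated by the partial addition, use RDP to secure transitivity of the refinement relation and cancellativity, pass to the Grothendieck group, and recover $E$ as the unit interval. Your identification of the two genuine pressure points---cancellativity of $M$ and the equality $[0,u]=\iota(E)$---is accurate; both hinge on iterated applications of RDP. The uniqueness argument via the universal property (every $G'^+$ element decomposes into $[0,u']$-summands by RDP, so $\phi(E)$ generates $G'$) is the right one and matches what is implicit in the categorical-equivalence statement immediately following the theorem in the paper. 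One small point: in the surjectivity step you should also verify that $\Psi$ is \emph{order}-preserving in both directions (i.e.\ $\Psi(G^+)=G'^+$), not merely a group isomorphism; this follows because both positive cones are generated as monoids by the respective unit intervals, which $\Psi$ matches bijectively via $\phi\circ\iota^{-1}$.
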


If we denote by $\mathcal{ULG}$ the category of Abelian unital po-groups whose objects are unital po-groups with RDP and  morphisms are homomorphisms of unital po-groups (i.e. order preserving homomorphisms that preserve fixed strong units). Similarly, let $\mathcal{EA}_{RDP}$ be the category of effect algebras with RDP whose objects are effect algebras with RDP and morphisms are homomorphisms of effect algebras. Then the functor $\Gamma: \mathcal{ULG} \to \mathcal{EA}_{RDP}$ defined by $(G,u)\mapsto \Gamma(G,u)$ and $\Gamma(h)=h|_{\Gamma(G,u)}$, where $h$ is a morphism of unital po-groups, defines a categorical equivalence of $\mathcal{ULG}$ and $\mathcal{EA}_{RDP}$, \cite[Thm 5.8]{177}.

Not only effect algebras with RDP are interval effect algebras. For example, let $\mathcal B(H)$ denote the po-group of Hermitian operators of a Hilbert space $H$ ordered by the property $A\le B$ iff $(Ax,x)\le (Bx,x)$ for every unit vector $x \in H$. Then the effect algebra $\mathcal E(H)=\Gamma(\mathcal B(H),I)$, where $I$ is the identity operator, is an interval effect algebra, but $\mathcal E(H)$ does not satisfy RDP.

Another important subclass of effect algebras with RDP is a class of MV-algebras. We recall that an {\it MV-algebra} is an
algebra $(M;\oplus,^*,0,1)$ of signature $\langle 2,1,0,0\rangle,$ where
$(M;\oplus,0)$ is a commutative monoid with neutral element $0$, and
for all $x,y \in M$
\begin{enumerate}
\item[(i)]  $(x^*)^*=x$;
\item[(ii)] $x\oplus 1 = 1$;
\item[(iii)] $1=0^*$;
\item[(iv)] $x\oplus (x\oplus y^*)^* = y\oplus (y\oplus x^*)^*.$
\end{enumerate}

Then $M$ is a distributive lattice.

On every MV-algebra $M$ we can define a partial operation, $+$, by $a+b$
is defined in $M$ iff $a\le b^*,$ and we set then $a+ b:= a\oplus
b.$ Then $(M;+,0,1)$ is an interval effect algebra with RDP,
moreover, thanks to \cite{Mun},  every MV-algebra is in fact an
interval $\Gamma(G,u),$ where $G$ is a unital $\ell$-group with $a^*=u-a$ and $a\oplus b:= (a+b)\wedge u,$ $a,b \in \Gamma(G,u).$ Conversely, every lattice ordered effect algebra $E$ with RDP can be converted into an MV-algebra, see e.g. \cite[Thm 8.8]{DvVe2}.

We note that in \cite{DvVe1, DvVe2}, there is introduced a non-commutative generalization of effect algebras called {\it pseudo effect algebras}, and in \cite{GeIo}, there is a non-commutative generalization of MV-algebras called {\it pseudo MV-algebras}. In both structures $+$ and $\oplus$, respectively, are not necessarily commutative. Pseudo effect algebras are sometimes also intervals in unital po-groups with RDP$_1$ not necessarily Abelian, and pseudo MV-algebras are always intervals in unital $\ell$-groups not necessarily Abelian, \cite{151}.

For every integer $n\ge 0$ and for every element $a \in E$, we set $0a=0$, $1a=a$, and $na=(n-1)a+a$ if $(n-1)a$ exists in $E$ and $(n-1)a\le a^-$. An element $a \in E$ is {\it infinitesimal} if $na$ is defined in $E$ for every integer $n \ge 1$. We denote by $\Infinit(E)$ the set of infinitesimals of $E$. Then (i) $0 \in \Infinit(E)$, $a\le b\in \Infinit(E)$ implies $a\in \Infinit(E)$, and (iii) $1 \notin \Infinit(E).$ An effect algebra is said to be {\it Archimedean} if $\Infinit(E)=\{0\}$. If a po-group $G$ is Archimedean, then $\Gamma(G,u)$ is an Archimedean effect algebra.

An {\it ideal} of an effect algebra $E$ is any nonempty subset $I$ of $E$ such that (i) $a\le b \in I$ implies $a \in I,$ and (ii) if $a,b \in I$ and $a+b$ is defined in $E$,  then $a + b \in I.$ Let $\mathcal I(E)$ be the set of ideals of an effect algebra $E$. An ideal $I$ of $E$ is a {\it Riesz ideal}, provides if $i\in I,a,b\in E$ and  $a+ b$ exists, $i\le
a+ b,$ then there exist $i_a,i_b\in I$ such that $ i_a \le a,$
$i_b\le b$ and $i\le i_a+ i_b$. It is clear that if $E$ satisfies RDP, then every ideal of $E$ is a Riesz ideal.

Given a subset $A$ of $E$, there is an ideal, $I_0(A)$, of $E$ generated by $A$. If $A=\{a\}$ is a singleton, then $I_0(a)$ simply defines the ideal generated by $a$. If $E$ satisfies RDP, then by \cite[Prop 3.2]{177},
$$
I_0(a)=\{x \in E: \exists \ a^0_i \in E, a_i^0\le a,\ i=1,\ldots,k,\ k \in \mathbb N,
\ x =a_1^0+\cdots +a_k^0\}.\eqno(2.1)
$$

An ideal is said to be (i) {\it maximal} if $I\ne E$ and it is not a proper subset of another ideal $J \ne E$, and  (ii) {\it prime} if $I_0(x)\cap I_0(y) \subseteq I$ implies $x \in I$ or $y \in I$. We denote by $\mathcal M(E)$ and $\mathcal P(E)$ the set of maximal  ideals and prime ideals  of $E$, respectively.  An  effect algebra $E$ is {\it local} if there is a unique maximal ideal. We define $\Rad(E)=\bigcap\{I: I \in \mathcal M(I)\}$; then $\Rad(E)$ is an ideal of $E$. If $E$ satisfies the RDP, then by \cite[Prop 4.1]{177}
$$
\Infinit(E) \subseteq \Rad(E).\eqno(2.2)
$$
Let $E$ be an effect algebra. If $I$ is a Riesz ideal, then $I$ defines a congruence $\sim_I$ on $E$ as follows: $a\sim_I b$ iff $a-e=b-f$ for some $e,f \in I$ such that $e\le a$ and $f \le b$. Then by \cite[Cor 3.1.17]{DvPu}, $E/I:=\{x/I: x/I=\{y \in E: y\sim x, x \in E\}\}$ is an effect algebra, and if $E$ satisfies RDP, then $E/I$ is with RDP, too, cf. \cite[Prop 4.1]{185}.

An analogue of a probability measure on an effect algebra is a state. We remind that a state on an effect algebra $E$ is a mapping $s:E \to [0,1]$ is a {\it state} if (i) $s(1)=1$, and (ii) $s(a+b)=s(a)+s(b)$ whenever $a+b$ is defined in $E$. We note that not every effect algebra has a state, but due to \cite[Cor 4.4]{Goo}, every interval effect algebra admits at least one state. It is clear that the set $\Ker(s):=\{a\in E: s(a)\}$ is an ideal of $E$. Similarly, if $f:E\to F$ is a homomorphism, then $\Ker(f)=\{a\in E: f(a)=0\}$ is an ideal.

We note that a {\it state} on a unital po-group $(G,u)$ is any homomorphism $s$ of unital po-groups from $(G,u)$ into $(\mathbb R,1)$ i.e. (i) $s(u)=1$, (ii) $s(g_1+g_2)=s(g_1)+s(g_2)$, and (iii) $s(g)\ge 0$ whenever $g\ge 0$.

A poset $E$ is an {\it antilattice} if only comparable elements of $E$ have
an infimum or a supremum. We note that every linearly ordered poset is an antilattice, and a lattice  is an antilattice iff it is linearly ordered.

In Example \ref{ex:6.1} below, there is an example of an antilattice effect algebra with RDP that is not a lattice.

Antilattices characterize prime ideals of $E$ because a proper ideal $I$ of an effect algebra $E$ with RDP is prime iff $E/I$ is an antilattice, cf. \cite[Prop 6.5]{177}. In addition, in view of \cite[Thm 6.17]{177}, an effect algebra $E=\Gamma(G,u)$, where $G$ is a po-group with the RDP, is an antilattice iff $G$ is an antilattice.

If $A,B$ are two nonempty subsets of $E,$ we set $A+B:=\{a+b: a \in A, b \in B, a+b \mbox{ is defined in }  E\}.$ We say that $A+B$ is {\it defined} in $E$ if $a+b$ is defined in $E$ for each $a \in A$ and each $b \in B.$  Similarly, we write $A \leqslant B$ if $a\le b$ for each $a \in A$ and each $b \in B.$ We denote by $A^-:=\{x^-\in E: x\in A\}$ and $x+A=\{x+a: a\in A$ if $x+a$ is defined in $A\}$.

\begin{lemma}\label{le:2.2}
Let $I$ be an ideal of an effect algebra $E$. Then the effect subalgebra of $E$ generated by $I$ is the set $\langle I\rangle =I\cup I^-$.
\end{lemma}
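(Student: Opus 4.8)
The plan is to prove the two inclusions of the asserted equality $\langle I\rangle = I\cup I^-$. The inclusion $\langle I\rangle \supseteq I\cup I^-$ is immediate: any effect subalgebra $F$ with $F\supseteq I$ must, by closure under the unary operation $a\mapsto a^-$, also contain $I^-$, hence $F\supseteq I\cup I^-$. Consequently it suffices to prove that $I\cup I^-$ is \emph{itself} an effect subalgebra of $E$; since it visibly contains $I$, it is then the smallest such subalgebra, and equality follows.

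First I would dispatch the easy closure axioms. Because $I$ is nonempty and downward closed, $0\in I$, whence $1=0^-\in I^-$ and so $1\in I\cup I^-$. Closure under $^-$ is clear as well: if $a\in I$ then $a^-\in I^-$, and if $a=x^-\in I^-$ with $x\in I$ then $a^-=(x^-)^-=x\in I$.

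The real content is closure under the (defined) partial sum $+$, which I would treat by cases according to where the two summands lie. If $a,b\in I$ and $a+b$ is defined, then $a+b\in I$ directly from the second ideal axiom. The remaining case is that at least one summand, say $b=y^-$ with $y\in I$, lies in $I^-$ (the other summand $a$ being allowed in either $I$ or $I^-$). Here the key observation is a complement identity: since $a+y^-$ is defined we have $a\le (y^-)^-=y$, so $y-a$ exists, and
$$(y-a)+(a+y^-)=\big((y-a)+a\big)+y^-=y+y^-=1,$$
whence $(a+y^-)^-=y-a$ by uniqueness of the complement. Now $y-a\le y\in I$, so downward closure of the ideal gives $y-a\in I$, and therefore $a+b=a+y^-=(y-a)^-\in I^-$. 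This settles both mixed subcases at once.

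The step I expect to require the most care is precisely this last one. The temptation is to try to place $a+b$ directly into $I\cup I^-$; the right move instead is to compute its \emph{complement} $(a+b)^-$, the point being that the downward closure of an ideal is the only structural property available and it applies to elements bounded above by a member of $I$, whereas no analogous upward-closure property is assumed. Once the identity $(a+y^-)^-=y-a$ is secured, the argument closes uniformly across the cases, so $I\cup I^-$ is an effect subalgebra and the lemma follows.
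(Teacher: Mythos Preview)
Your proof is correct and follows essentially the same route as the paper: both arguments reduce to the identity $(a+y^-)^-=y-a$ and use downward closure of $I$ to conclude $a+y^-\in I^-$. The only minor difference is that the paper, assuming $I\ne E$, explicitly rules out the case $a^-+b^-$ with $a,b\in I$ by observing it forces $a^-\le b\in I$ and hence $a^-\in I$, a contradiction; your treatment instead absorbs this case into the mixed one, which is arguably cleaner since the computation $(a+y^-)^-=y-a\in I$ goes through regardless of whether $a$ lies in $I$ or $I^-$.
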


\begin{proof}
The statement is evident if $I=E$. Now let $I\ne E$.
Then $I\cap I^-=\emptyset$. Let $a,b \in I$. If $a+b \in E$, then $a+b \in I$. If $a+b^-\in E$, then $a+b^-+x=1=b+b^-$ for some $x \in E$,  so that $a+x=b$ which entails $x=(a+b^-)^-\in I$ and finally, $a+b^-\in I^-$. We note that $a^-+b^-$ is not defined in $E$. Indeed, otherwise, $a^-\le b$ giving $a^-\in I$ which is absurd. Hence, $\langle I\rangle$ is an effect subalgebra of $E$ generated by $I$.
\end{proof}

A poset $(A,\le)$ is (i) {\it upwards directed} provides $x,y \in A$, there is $z \in A$ such that $x\le z$ and $y \le z$, (ii) {\it downwards directed} provides $x,y \in A$, there is $z \in A$ such that $x\ge z$ and $y \ge z$, and (iii) {\it directed} if $A$ is both upwards and downwards directed. For example, every po-group is upwards directed iff it is downwards directed.

\section{Lexicographic Product and Lexicographic Pseudo Effect Algebras}

Let $G_1$ and $G_2$ be two po-groups and we define the  direct product group $G_1 \times G_2$ with the group addition defined by coordinates. We define the {\it lexicographic order} $\le$ on  $G_1 \times G_2$ by $(g_1,g_2) \le (h_1,h_2)$ iff either $g_1 < g_2$ or $g_1=g_2$ and $h_1 \le h_2,$ for $(g_1,g_2), (h_1,h_2) \in G_1 \times G_2,$ and $G_1 \lex G_2$ will denote the {\it lexicographic product} of $G_1$ and $G_2$ endowed with this defined lexicographic order.

We note that according to \cite[Thm 3.1]{DvKo} if $G_1$ is linearly ordered and $G_2$ satisfy RDP, then $G=G_1\lex G_2$ satisfies also RDP, by \cite[Thm 3.2]{DvKo}, $G$ satisfies RDP$_1$ whenever $G_1$ is linear and $G_2$ an Abelian directed po-group with RDP. Finally, due to \cite[Thm 3.8]{DvKo}, $G=\mathbb H \lex G_2$, where $\mathbb H$ is a subgroup of $\mathbb R$, $1\in \mathbb H$,  satisfies RDP$_1$ iff the directed po-group $G_2$ satisfies RDP$_1$. We note that $G$ satisfies RDP$_2$ iff $G$ is an $\ell$-group, \cite[Prop 4.2(ii)]{DvVe1}. Therefore,  $G_1\lex G_2$ satisfies RDP$_2$ iff $G_1$ is linearly ordered and $G_2$ is an $\ell$-group, cf. \cite[(d) p. 26]{Fuc}.

For Abelian po-groups $G_1$ and $G_2$, we have the following criterion \cite[Cor 2.12]{Goo}: $G_1\lex G_2$ satisfies RDP iff

\begin{enumerate}
\item[(i)] Both $G_1$ and $G_2$ satisfy RDP.
\item[(ii)] Either $G_1$ satisfies SRIP or $G_2$ is directed.
\end{enumerate}

We note that RDP and RDP$_1$ for Abelian po-groups coincide, but we do not know a complete answer to a question when $G_1\lex G_2$ does satisfy RDP$_1$ for non Abelian po-groups. The above conditions (i)--(ii) hold for po-groups that are not necessarily Abelian whenever $G_1\lex G_2$ has RDP$_1$, see \cite[Prop 3.2]{DvKr}.

Now let $(H,u)$ be a unital po-group and $G$ a po-group. Then $(H\lex G,(u,0))$ is again a unital po-group and
$$
\Gamma(H\lex G,(u,0))\eqno(2.2)
$$
is a pseudo effect algebra or even an effect algebra when both $H$ and $G$ are Abelian.

The aims of this paper is to study when an effect algebra is isomorphic to an effect algebra of the form (2.2). The first look to this problem shows that every interval effect algebra is of this kind. Indeed, let $E=\Gamma(H,u)$, where $(H,u)$ is an Abelian po-group. Then $E \cong \Gamma(H\lex O,(u,0))$, where $O$ is the zero group. On the other hand, $(O,0)$ is trivially a unital po-group. Then $\Gamma(O\lex G,(0,0))$ is a degenerate effect algebra, i.e. $0=1$.  We will not concentrate to such evident situations, and rather  we will deal mainly with the case when both $H$ and $G$ are nontrivial and with RDP.

We say that an effect algebra $E$ is {\it lexicographic} if there are an Abelian unital po-group $(H,u)$ and an Abelian directed po-group $G$ such that $E$ is isomorphic to $\Gamma(H\lex G,(u,0))$. This notion for MV-algebras was introduced in \cite{DFL} and for pseudo MV-algebras in \cite{275}; for them we have to assume that $H$ is linearly ordered and $G$ is an $\ell$-group.
We note that if $(H,u)=(\mathbb Z,1)$ and $G$ is Abelian, we have by (2.2) perfect effect algebras, \cite{177}, (for more details on perfect effect algebras see a note just before Theorem \ref{th:5.1} below) which generalize perfect MV-algebras studied in \cite{DiLe1}, where $G$ was an $\ell$-group. The case of $n$-perfect pseudo MV-algebras, i.e. the case $(H,u)=(\frac{1}{n}\mathbb Z,1)$ and $G$ an $\ell$-group, was studied in \cite{Dv08}, and the case $(H,u)=(\mathbb H,1)$, where $\mathbb H$ is a subgroup of the group $\mathbb R$, was investigated in \cite{264} for so-called strong $\mathbb H$-perfect pseudo effect algebras and in \cite{275} for strong $(H,u)$-perfect pseudo MV-algebras, where $(H,u)$ is a linearly ordered unital Abelian po-group.

We start with the following notion which was introduced for MV-algebras in \cite{CiTo} and for pseudo MV-algebras in \cite{275}. An ideal $I$ of an effect algebra $E$ with the RDP is said to be {\it retractive} if the canonical projection $\pi_I: E  \to E/I$ is retractive, i.e. there is a homomorphism $\delta_I: E/I \to E$ such that $\pi_I\circ \delta_I=id_{E/I}$. If an ideal $I$ is retractive, then $\delta_I$ is injective and $E/I$ is isomorphic to an effect subalgebra of $E$.

For example, if $E=\Gamma(H\lex G,(u,0))$ and $I=\{(0,g): g \in G^+\}$, where $(H,u)$ is an Abelian unital po-group and $G$ is a directed Abelian po-group both with RDP, then $I$ is an ideal, and due to  $E/I \cong \Gamma(H,u) \cong \Gamma(H\lex \{0\},(u,0)) \subseteq \Gamma(H\lex G,(u,0))$, $I$ is retractive.

The notion of a lexicographic ideal introduced in \cite{DFL} only for MV-algebras and in \cite{275} for pseudo MV-algebras will be now extended also for effect algebras. We say that an ideal $I$ of an effect algebra $E$ with RDP, $\{0\}\ne I \ne E$, is {\it lexicographic} if
\begin{enumerate}
\item[{\rm (i)}] $I$ is strict, i.e. $x/I<y/I$ implies $x<y$;
\item[{\rm (ii)}] $I$ is retractive;
\item[{\rm (iii)}] $I$ is prime.
\end{enumerate}
We denote by $\mathrm{LexId}(E)$ and by $\mathrm{StrId}(E)$ the set of lexicographic ideals of $E$ and strict nontrivial ideals, respectively, of $E$.

\begin{example}\label{ex:3.1}
We define three  effect algebras: $E_1=\Gamma(\mathbb Z \lex (\mathbb Z\lex \mathbb Z),(1,(0,0)))$, $E_2 = \Gamma((\mathbb Z \lex \mathbb Z)\lex \mathbb Z,((1,0),0))$, and $E=\Gamma(\mathbb Z \lex \mathbb Z\lex \mathbb Z,(1,0,0))$ which are  mutually isomorphic.  We see that $I_1=\{(0,0,n): n \ge 0\}$ and $I_2=\{(0,m,n): m > 0, n \in \mathbb Z \mbox{ or } m=0, n\ge 0\}$ are only two lexicographic ideals of $E$ and $I_1 \subset I_2$.
\end{example}

It is worthy of recalling that if $\Gamma(H_1\lex G_1,(u_1,0))\cong \Gamma(H_2\lex G_2,(u_2,0))$, it does not mean that $(H_1,u_1)\cong (H_2,u_2)$ and $G_1\cong G_2$. Indeed, in Example \ref{ex:3.1}, we have $E_1\cong E_2$ and the linear unital po-groups $(\mathbb Z,1)$ and $(\mathbb Z \lex \mathbb Z,(1,0))$ are not isomorphic while the first one is Archimedean and the second one is not Archimedean. Analogously, $G_1$ and $G_2$ are not isomorphic.

But when we look for a representation such that $(H,u)$ is fixed, then we have uniqueness of $G$ in (2.2).

We note that it can happen that $\mathrm{LexId}(E)$ is an empty set. Indeed, let $E =\Gamma(\mathbb Z \lex \mathbb Z,(2,1))$. Then $E$ is an effect algebra that is also an MV-algebra. It has a unique nontrivial ideal $I=\{(0,n): n \ge 0\}$, but $E/I\cong \Gamma(\frac{1}{2}\mathbb Z,1)$ and it has no isomorphic copy in $E$.

Now we show that if $\mathrm{LexId}(E)$ is nonempty, then $\mathrm{LexId}(E)$ is a linearly ordered set with respect to the set theoretical inclusion.

\begin{proposition}\label{pr:3.2}
Let $E$ be an effect algebra with \RDP. If $I,J\in \mbox{\rm LexId}(E)$, then $I\subseteq J$ or $J\subseteq I$. In addition, every lexicographic ideal is contained in $\Infinit(E)$. If one of the lexicographic ideals is a maximal ideal, then $E$ is local.
\end{proposition}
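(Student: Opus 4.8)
The plan is to extract a single structural consequence of strictness and then read off all three assertions from it. The key observation I would establish first is the following: if $K$ is a strict ideal of $E$, then every element of $K$ lies \emph{strictly} below every element outside $K$, i.e.\ $b\in K$ and $c\notin K$ imply $b<c$. To see this, note that the quotient map $\pi_K\colon E\to E/K$ is an order-preserving homomorphism, so $0/K$ is the least element of $E/K$; moreover a direct inspection of the congruence $\sim_K$ shows that $c/K=0/K$ holds iff $c\in K$. Hence $b\in K$ gives $b/K=0/K$, while $c\notin K$ gives $c/K>0/K$, so $b/K<c/K$, and strictness (condition (i) in the definition of a lexicographic ideal) yields $b<c$.

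Granting this observation, linear ordering is immediate. Suppose $I,J\in\mathrm{LexId}(E)$ are incomparable, say $a\in I\setminus J$ and $b\in J\setminus I$. Applying the observation to $K=J$ with $b\in J$ and $a\notin J$ gives $b<a$; since $a\in I$ and ideals are downward closed, $b\in I$, contradicting $b\notin I$. Thus $I\subseteq J$ or $J\subseteq I$. For the inclusion $I\subseteq\Infinit(E)$, fix $a\in I$. Because $I\ne E$ we have $a^-\notin I$ (otherwise $1=a+a^-\in I$). I would then show by induction that for every $n\ge1$ the multiple $na$ is defined and lies in $I$. The base case $n=1$ is clear; assuming $na\in I$, the observation applied with $b=na\in I$ and $c=a^-\notin I$ gives $na<a^-$, hence $na\le a^-=1-a$, so $(n+1)a=na+a$ is defined, and it lies in $I$ by closure of $I$ under defined sums. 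Therefore $na$ is defined for all $n$, i.e.\ $a\in\Infinit(E)$.

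Finally, locality follows by combining the previous part with the inclusion $\Infinit(E)\subseteq\Rad(E)$ recalled above. If a lexicographic ideal $I$ is maximal, then $I\subseteq\Infinit(E)\subseteq\Rad(E)$, and since $\Rad(E)$ is the intersection of all maximal ideals, $I\subseteq M$ for every $M\in\mathcal M(E)$; maximality of $I$ together with properness of each $M$ forces $I=M$, so $E$ has a unique maximal ideal and is local. The only genuinely delicate point is the key observation of the first paragraph — in particular verifying that $c/K=0/K$ is equivalent to $c\in K$, which pins down exactly when strictness can be invoked. Once this ``initial segment with a gap'' property is in hand, all three conclusions are routine, and it is worth noting that the comparability and infinitesimality claims use only strictness, not primeness or retractiveness.
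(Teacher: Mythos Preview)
Your proof is correct and follows essentially the same approach as the paper: both hinge on the observation that strictness forces every element of a strict ideal strictly below every element outside it, and the comparability and infinitesimality arguments are virtually identical to the paper's. The only minor difference is in the locality argument, where you route through the inclusion $\Infinit(E)\subseteq\Rad(E)$, while the paper argues directly that a second maximal ideal $J\ne I$ would yield $x\in I\setminus J$, $y\in J\setminus I$ with $x<y$, forcing $x\in J$ by downward closure.
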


\begin{proof}
Suppose the converse, that is, there are $x \in I\setminus J$ and $y \in J\setminus I$. Then $x/I<y/I$ and $y/J<x/J$ which yields $x<y$ and $y<x$ which is absurd.

Assume that $I$ is a lexicographic ideal of the effect algebra $E$ and choose two elements $x,y \in I$. Then $x/I=0/I <1/I=y^-/I$ which entails $x<y^-$ since $I$ is strict. Hence, $x+y$ is defined in $E$ and $x+y\in I$. Consequently, every element of $I$ is infinitesimal, and $I \subseteq \Infinit(E)$.

Let $I$ be a lexicographic ideal that is also a maximal ideal of $E$. Assume that $J$ is another maximal ideal of $E$, and let $J\ne I$. There are $x \in I\setminus J $ and $y \in J\setminus I$ which implies $x <y$ so that $x \in J$ which is a contradiction. Hence, $I$ is a unique maximal ideal of $E$.
\end{proof}

It can happen that $\mathrm{StrId}(E)=\emptyset$. Let $E=[0,1]\times [0,1]=\Gamma(\mathbb R^2,(1,1))$. It satisfies the RDP, and there are only two nontrivial ideals $I_1=[0,1]\times \{0\}$ and $I_2=\{0\}\times [0,1]$. Then $E/I_1\cong \Gamma(\mathbb R,1)\cong E/I_2$ and it is straightforward to see that there is no strict nontrivial ideal of $E$.

\begin{proposition}\label{pr:3.3}
Let $E$ be an effect algebra with \RDP. If $\mathrm{StrId}(E)\ne \emptyset$, the set of strict ideals is linearly ordered with respect to set theoretical inclusion and there is the largest strict ideal of $\mathrm{StrId}(E)$. Every strict ideal of $E$ is contained in $\Infinit(E)$.
\end{proposition}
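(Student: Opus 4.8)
The plan is to prove the three assertions of Proposition~\ref{pr:3.3} in order, reusing the machinery already developed for lexicographic ideals in Proposition~\ref{pr:3.2}, since a strict ideal shares the key feature (strictness) that drove those arguments.

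First I would establish that $\mathrm{StrId}(E)$ is linearly ordered under inclusion. The argument is the same one used in the first paragraph of the proof of Proposition~\ref{pr:3.2}: suppose $I,J\in\mathrm{StrId}(E)$ with neither contained in the other, so there exist $x\in I\setminus J$ and $y\in J\setminus I$. From $x\in I$ and $y\notin I$ I want to extract the strict comparison $x/I<y/I$, hence $x<y$ by strictness of $I$; symmetrically $y/J<x/J$ gives $y<x$, a contradiction. The delicate point here is that strictness as defined, $x/I<y/I \Rightarrow x<y$, only yields a conclusion once we actually know $x/I<y/I$, and for a merely strict (not necessarily prime) ideal we must check that $x\in I$, $y\notin I$ forces $x/I<y/I$ rather than incomparability in $E/I$. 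I expect this to be the main obstacle: in Proposition~\ref{pr:3.2} primeness guaranteed $E/I$ is an antilattice, but strict ideals need not be prime, so I must argue the comparison directly from the ideal structure, using that $x/I=0/I$ while $y/I\neq 0/I$ together with $y-$something lying in $I$, or else adapt the quotient order so that an element of $I$ sits below any element outside $I$.

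Second, granting linear ordering, the existence of a largest element of $\mathrm{StrId}(E)$ should follow by taking the union $K=\bigcup\{I:I\in\mathrm{StrId}(E)\}$ and showing $K$ is again a strict ideal. That $K$ is an ideal is routine from the chain property: downward closure and closure under defined sums both transfer to a directed union of ideals. Strictness of $K$ requires checking that $x/K<y/K$ implies $x<y$; here I would use that any witnessing relation in $E/K$ is already witnessed at some member $I$ of the chain (since the relevant finitely many elements of $K$ lie in a single $I$), reducing to strictness of that $I$. I would need to confirm $K\neq E$ and $K\neq\{0\}$ so that $K$ is genuinely nontrivial and belongs to $\mathrm{StrId}(E)$.

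Third, the containment $I\subseteq\Infinit(E)$ for every strict ideal is exactly the second paragraph of the proof of Proposition~\ref{pr:3.2}, and it used \emph{only} strictness, not retractiveness or primeness. So I would simply repeat it: for $x,y\in I$ we have $x/I=0/I<1/I=y^-/I$, whence $x<y^-$ by strictness, so $x+y$ is defined and lies in $I$; iterating shows $nx$ is defined for all $n$, i.e.\ every element of $I$ is infinitesimal, giving $I\subseteq\Infinit(E)$. This final part is the cleanest, and it confirms that the substantive content of the proposition rests on the linear-ordering argument and on verifying that the union of the chain remains strict.
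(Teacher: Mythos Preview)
Your outline matches the paper's proof in all three parts. The obstacle you anticipate in the first step is not real: for any proper ideal $I$, the class $0/I$ is the least element of the quotient effect algebra $E/I$, so $x\in I$ forces $x/I=0/I\le y/I$, while $y\notin I$ forces $y/I\ne 0/I$; hence $x/I<y/I$ is automatic and no appeal to primeness or antilattice structure is needed. The paper uses precisely this observation, and for strictness of the union $K=\bigcup_\alpha I_\alpha$ it is even terser than you propose: given $x\in K$ and $y\notin K$, pick $\alpha$ with $x\in I_\alpha$, note that then $y\notin I_\alpha$, and apply strictness of $I_\alpha$ to $x/I_\alpha=0/I_\alpha<y/I_\alpha$ to obtain $x<y$. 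Your reduction of a general comparison $a/K<b/K$ to some $a/I_\alpha<b/I_\alpha$ via finitely many congruence witnesses is a sound and somewhat more explicit alternative that reaches the same conclusion.
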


\begin{proof}
The fact that $\mathrm{StrId}(E)$ is linearly ordered with respect to the set theoretical inclusion can be proved in the same way as that in Proposition \ref{pr:3.2}

Let $\{I_\alpha\}_\alpha$ be the system of all strict ideals of $E$, it is a chain with respect to the set theoretical inclusion. Set $I=\bigcup_\alpha I_\alpha$. Clearly that $I$ is an ideal of $E$ and $1 \notin I$.  Now we show that $I$ is strict. Let $x \in I$ and $y\notin I$. There is $\alpha$ such that $x \in I_\alpha$ and hence, $y \notin I_\alpha$ which gives $x< y$.

Let $I$ be a strict ideal of $E$ and let $x,y \in I$ be given. Then from $x/I<y^-/I$ we get $x<y^-$ so that $x+y$ exists in $E$ and $x+y$ belongs to $I$. Hence, every element of $I$ is an infinitesimal, so that $I \subseteq \Infinit(E)$.
\end{proof}

\begin{corollary}\label{co:3.4}
Let $E$ be an effect algebra with \RDP. Then every state $s$ on $E$ vanishes on each lexicographic ideal and on each strict ideal of $E$.
\end{corollary}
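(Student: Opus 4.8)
The plan is to reduce the entire statement to a single fact: every state vanishes on the set $\Infinit(E)$ of infinitesimals. Once that is in hand, both assertions follow immediately, because Proposition \ref{pr:3.2} asserts that every lexicographic ideal is contained in $\Infinit(E)$ and Proposition \ref{pr:3.3} asserts the same for every strict ideal. So the substantive content is to prove that $s(a)=0$ whenever $a\in \Infinit(E)$.

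First I would record the two elementary properties of a state $s:E\to[0,1]$ that I will use. Additivity gives, by induction on $n$, that $s(na)=n\,s(a)$ whenever $na$ is defined in $E$: indeed $s(na)=s((n-1)a+a)=s((n-1)a)+s(a)$, and the inductive hypothesis finishes the step. Monotonicity follows from $x\le y$ meaning $y=x+c$ for some $c\in E$, whence $s(y)=s(x)+s(c)\ge s(x)$ since $s(c)\ge 0$; in particular $s(x)\le s(1)=1$ for every $x\in E$, as every element is $\le 1$.

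Now let $a\in \Infinit(E)$. By definition $na$ is defined in $E$ for every integer $n\ge 1$, so $na\in E$ and hence $s(na)\le 1$. Combining with the multiplicativity computed above, I get $n\,s(a)=s(na)\le 1$, that is $s(a)\le \tfrac{1}{n}$, for every $n\ge 1$. Since $s(a)\ge 0$, letting $n\to\infty$ forces $s(a)=0$. This establishes that $s$ vanishes identically on $\Infinit(E)$.

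Finally I would invoke the two propositions: given a lexicographic ideal $I$, Proposition \ref{pr:3.2} yields $I\subseteq \Infinit(E)$, so $s(x)=0$ for every $x\in I$; given a strict ideal $J$, Proposition \ref{pr:3.3} yields $J\subseteq \Infinit(E)$, so again $s$ vanishes on $J$. There is no real obstacle in this argument — the only point requiring care is the bound $s(na)\le 1$, which rests precisely on the defining property of an infinitesimal that $na$ remains an element of $E$ (hence $\le 1$) for all $n$, together with the monotonicity of states.
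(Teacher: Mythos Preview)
Your proof is correct and follows essentially the same approach as the paper: invoke Propositions~\ref{pr:3.2} and~\ref{pr:3.3} to place every lexicographic or strict ideal inside $\Infinit(E)$, and then observe that any state vanishes on infinitesimals. The paper's proof leaves the latter step implicit, while you have spelled out the standard argument $n\,s(a)=s(na)\le 1$ for all $n$.
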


\begin{proof}
Since $E$ satisfies the RDP, $E$ admits a state. Let $s$ be an arbitrary state on $E$. By Propositions \ref{pr:3.2}--\ref{pr:3.3}, every element of a lexicographic ideal and every element of a strict ideal is infinitesimal. Hence, $s$ vanishes on such ideals.
\end{proof}

Other results on states on the lexicographic effect algebras will be presented in Theorems \ref{th:4.4}--\ref{th:4.5} below.

\begin{proposition}\label{pr:3.5}
Let $(H,u)$ be an antilattice unital Abelian po-group  with the \RDP and $G$ an Abelian directed po-group with the \RDP. If we set $I=\{(0,g): g \in G^+\}$, then $I$ is a lexicographic ideal of $E=\Gamma(H\lex G,(u,0))$, whenever $H$ and $G$ are nontrivial po-groups.
\end{proposition}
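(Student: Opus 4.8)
The plan is to verify the three defining conditions of a lexicographic ideal, noting first that nondegeneracy and retractivity are essentially already in hand from the discussion preceding the definition. Indeed, under exactly the present hypotheses on $(H,u)$ and $G$, the set $I=\{(0,g):g\in G^+\}$ was observed there to be a retractive ideal of $E=\Gamma(H\lex G,(u,0))$ with $E/I\cong\Gamma(H,u)$; the only new assumption in the statement is that $H$ is an antilattice. For $\{0\}\ne I\ne E$ I would argue from nontriviality: a nontrivial directed po-group $G$ has some $g_0>0$, whence $(0,g_0)\in I$ with $(0,g_0)\ne(0,0)$, while a nontrivial $H$ forces $u>0$ (a strong unit equal to $0$ would give $g\le 0$ for all $g\in H$, making $H$ trivial), so $1=(u,0)\notin I$ and $I\ne E$. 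This settles condition (ii) and properness, and it remains to prove (i) strictness and (iii) primeness.

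For strictness I would make the identification $E/I\cong\Gamma(H,u)$ explicit by computing the congruence $\sim_I$. Two elements $(h_1,g_1),(h_2,g_2)\in E$ are related exactly when there are $a,b\in G^+$ with $(0,a)\le(h_1,g_1)$, $(0,b)\le(h_2,g_2)$ and $(h_1,g_1-a)=(h_2,g_2-b)$; since subtraction of an element of $I$ only alters the second coordinate, this forces $h_1=h_2$. Hence the classes are indexed by the first coordinate and the quotient order reads $(h_1,g_1)/I\le(h_2,g_2)/I$ iff $h_1\le h_2$ in $\Gamma(H,u)$. Consequently $x/I<y/I$ means $h_x<h_y$ strictly, and then the very definition of the lexicographic order gives $x=(h_x,g_x)<(h_y,g_y)=y$ irrespective of the second coordinates. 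Thus $I$ is strict; note this step uses only the lexicographic structure, not the antilattice hypothesis.

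The remaining, and conceptually central, step is primeness, and here the antilattice assumption on $H$ enters. First I would record that $E$ has RDP: by the criterion of \cite[Cor 2.12]{Goo}, $H\lex G$ has RDP because both $H$ and $G$ have RDP and $G$ is directed, so $E=\Gamma(H\lex G,(u,0))$ has RDP as well. This licenses the two characterizations recalled in Section 2. Since $H$ is an antilattice po-group with RDP, the interval effect algebra $\Gamma(H,u)$ is an antilattice by \cite[Thm 6.17]{177}; hence $E/I\cong\Gamma(H,u)$ is an antilattice, and by the criterion that a proper ideal of an effect algebra with RDP is prime iff its quotient is an antilattice (\cite[Prop 6.5]{177}), $I$ is prime. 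This establishes (iii) and completes the verification that $I\in\mathrm{LexId}(E)$.

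The main obstacle is really only the bookkeeping for primeness, specifically confirming that $E$ enjoys RDP so that the antilattice/prime dictionary of Section 2 applies, since once $E/I\cong\Gamma(H,u)$ is in place, primeness is forced precisely by the hypothesis that $H$ is an antilattice. Everything else reduces to the lexicographic order and to facts already established for $I$ in the example preceding the definition of lexicographic ideals.
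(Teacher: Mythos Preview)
Your proposal is correct and follows essentially the same route as the paper: retractivity is taken from the example preceding the definition, primeness comes from $E/I\cong\Gamma(H,u)$ being an antilattice (via \cite[Prop 6.5, Thm 6.17]{177}), and strictness reduces to the observation that $x/I<y/I$ forces a strict inequality of first coordinates, hence $x<y$ by the lexicographic order. The paper phrases the strictness step in the language of the ordered $(H,u)$-decomposition $E_t=s^{-1}(\{t\})$ rather than computing the congruence directly, and it omits your explicit checks of $\{0\}\ne I\ne E$ and of RDP for $E$, but the underlying argument is the same.
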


\begin{proof}
The set $I:=E_0 =\{0\}\times G^+$ is an ideal that is evidently retractive. It is prime while $E/E_0\cong \Gamma(H,u)$ and $(H,u)$ is an antilattice. Let $s:E\to E/E_0\cong \Gamma(H,u)$. Then $s$ is an $(H,u)$-valued state and $E_t=s^{-1}(\{t\})$ defines an ordered and directed $(H,u)$-decomposition of $E$.  Now let $x/E_0 < y/E_0$. Then $t:=s(x)< v:=s(y)$ proving that $x\in E_t$ and $y \in E_v$ so that $x<y$. Hence, $E_0$ is strict. Consequently, $I=E_0$ is lexicographic.
\end{proof}

\section{$(H,u)$-perfect Effect Algebras}

The following notion was already studied for pseudo MV-algebras in \cite{275} and as a special kind of pseudo effect algebras in \cite{DvKr, DvKo}.

In what follows, we will assume that $(H,u)$ is a fixed nontrivial Abelian unital po-group.

For a unital Abelian po-group $(H,u)$, we set $[0,u]_H:=\{h\in H: 0\le h\le u\}.$

We say that a decomposition $(E_t: t \in [0,u]_H)$ of an effect algebra $E,$ i.e. a system $(E_t: t \in [0,u]_H)$ of nonempty subsets of $E$ such that  $E_s \cap E_t = \emptyset$ for $s\ne t,$ $s,t \in [0,u]_H$ and $\bigcup_{t \in [0,u]_H} E_t = E$, is
an  $(H,u)$-{\it decomposition} if

\begin{enumerate}
\item[{\rm (a)}] $E_{t}^{-}=E_{u-t}$ for any $t\in
[0,u]_H$;

\item[{\rm (b)}] if $x\in E_{s},$ $y\in E_{t}$ and $x+ y$ exists in
$E$, then $s+t\le u$ and $x+ y\in E_{s+t}$ for
$s,t \in [0,u]_H$.
\end{enumerate}

For example, if $E=\Gamma(H\lex G,(u,0))$, where $(H,u)$ is a unital Abelian po-group and $G$ is a directed po-group with RDP, then $(E_t: t \in [0,u]_H)$, where $E_t=\{(t,g): (t,g)\in E\}$, is an $(H,u)$-decomposition of $E$. In addition, $E_0$ is an ideal of $E$, and $E_0\subseteq \Infinit(E)$.

We say that a mapping $s:E \to [0,u]_H$ such that (i) $s(1)=u$, and (ii) $s(x+y) = s(x)+s(y)$ whenever $x+y$ is defined in $E$ is an $(H,u)$-{\it state}. If in addition (iii) $s(E)=[0,u]_H$, $s$ is said to be an $(H,u)$-{\it valued state}.

If $(H,u)$ is a unital Abelian po-group with RDP, and $E$ is an effect algebra with the RDP, then every $(H,u)$-state $s:E\to [0,u]_H$ can be uniquely extended to a po-group homomorphism $\hat s:K\to H$ such that $\hat s(k)=u$, where $(K,k)$  is an Abelian unital po-group with RDP such that $E=\Gamma(K,k)$. If, in addition,  $s$ is an $(H,u)$-valued state, then the po-group homomorphism is surjective.

The properties of an $(H,u)$-state: (i) $s(0)=0$, (ii) $s(x)\le s(y)$ whenever $x\le y$, (iii) $s(x^-)=s(x)^-$, $x \in E$.

We note that it can happen, that for some effect algebras, there is no $(\mathbb R,1)$-state.

\begin{theorem}\label{th:4.1}
Let  $E$ be an effect algebra and $(H,u)$ be an Abelian unital po-group. The following two statements
are equivalent:
\begin{enumerate}
\item[{\rm (i)}] There exists an $(H,u)$-valued state on $E$.

\item[{\rm (ii)}] There exists an $(H,u)$-decomposition $(E_t: t \in [0,u]_H)$ of nonempty  subsets of $E.$
\end{enumerate}

In addition, there is a one-to-one correspondence between the set of all $(H,u)$-decompositions and the set of all $(H,u)$-valued states on $E$.
\end{theorem}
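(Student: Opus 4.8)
The plan is to establish the equivalence by constructing each object from the other, and then to argue that these two constructions are mutually inverse, which yields the asserted one-to-one correspondence. The natural bijection is: given an $(H,u)$-valued state $s$, set $E_t := s^{-1}(\{t\})$ for each $t \in [0,u]_H$; conversely, given an $(H,u)$-decomposition $(E_t : t \in [0,u]_H)$, define $s : E \to [0,u]_H$ by $s(x) = t$ whenever $x \in E_t$. The whole argument reduces to checking that each recipe produces an object of the required type and that the two recipes undo each other.

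First I would prove the implication (i) $\Rightarrow$ (ii). Given an $(H,u)$-valued state $s$, define $E_t = s^{-1}(\{t\})$. Since $s(E) = [0,u]_H$ by condition (iii) of an $(H,u)$-valued state, each $E_t$ is nonempty; the fibers of a function are pairwise disjoint and cover $E$, so $(E_t)$ is a decomposition. Condition (a), $E_t^- = E_{u-t}$, follows from the property $s(x^-) = s(x)^- = u - s(x)$ listed just before the theorem: $x \in E_t$ iff $s(x) = t$ iff $s(x^-) = u - t$ iff $x^- \in E_{u-t}$. For condition (b), if $x \in E_s$, $y \in E_t$ and $x+y$ is defined, then $s(x+y) = s(x)+s(y) = s+t$, and since $x+y \in E$ we have $s+t = s(x+y) \in [0,u]_H$, so $s+t \le u$ and $x+y \in E_{s+t}$. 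This handles the forward direction.

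Next I would prove (ii) $\Rightarrow$ (i). Given a decomposition, the disjointness of the $E_t$ makes the assignment $s(x) = t$ (for the unique $t$ with $x \in E_t$) a well-defined map $E \to [0,u]_H$, and surjectivity (iii) is immediate from nonemptiness of every $E_t$. The main content is to verify $s(1) = u$ and additivity. For $s(1) = u$: one can locate $0$ and $1$ in the decomposition using (a) and (b). Taking any $x \in E_s$, the sum $x + x^-$ exists and equals $1$, so by (b) $1 \in E_{s + (u-s)} = E_u$, giving $s(1) = u$; the same kind of argument pins down $s(0) = 0$. Additivity $s(x+y) = s(x)+s(y)$ is exactly condition (b): if $x \in E_s$, $y \in E_t$ and $x+y$ is defined, then $x+y \in E_{s+t}$, so $s(x+y) = s+t = s(x)+s(y)$.

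Finally, for the one-to-one correspondence I would check that the two passages are inverse to each other. Starting from $s$, forming $E_t = s^{-1}(\{t\})$, and reading off the associated state returns the original $s$, since $s(x) = t$ precisely when $x \in s^{-1}(\{t\})$. Starting from a decomposition, forming $s$ and then taking fibers $s^{-1}(\{t\})$ recovers $E_t$, again by definition of $s$ and disjointness. The one genuine subtlety — the step I expect to be the main (if modest) obstacle — is the well-definedness implicit in assigning a single value $s(x)$ in direction (ii): this rests entirely on the $E_t$ being pairwise disjoint, so I would make sure that disjointness is invoked explicitly there, and likewise confirm that $s+t$ genuinely lands in $[0,u]_H$ in direction (i) rather than merely in $H$. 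Everything else is a routine transcription of conditions (a) and (b) into the additive and complementation properties of an $(H,u)$-valued state.
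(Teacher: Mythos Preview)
Your proposal is correct and follows essentially the same approach as the paper: both directions are obtained by passing between a state $s$ and its fibres $E_t = s^{-1}(\{t\})$, with conditions (a) and (b) corresponding exactly to the complement and additivity properties of $s$. The only cosmetic difference is that you locate $1$ in $E_u$ via $x + x^- = 1$ together with (a), whereas the paper first locates $0$ in $E_0$ via $x + 0 = x$ for $x \in E_u$; both are equally valid.
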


\begin{proof}
Let $s$ be an $(H,u)$-valued state. For any $t \in [0,u]_H$, we define $E_t:= s^{-1}(\{t\}).$ We assert that  the system $(E_t: t \in [0,u]_H)$ is a decomposition of $E.$ Indeed, for (a), let $x \in E_t.$ Then $s(x)=t$ and $s(x^-)=u-s(x)$, which yields $x^- \in E_{u-t}.$ Conversely, if $y \in E_{u-t},$ then $y^- \in E_t.$ For (b), assume $x \in E_s$ and $y \in E_t$ and let $x+y$ be defined in $E.$ Then $s(x+y) = s(x)+s(y)=s+t\le u,$ which implies $x+y \in E_{s+t}.$ Then $(E_t: t \in [0,u]_H)$ is an $(H,u)$-decomposition of $E.$

Conversely, let (ii) hold. Define a mapping $s: E \to [0,u]_H$ by $s(x)=t$ iff $x \in E_t.$ Choose two elements $x,y \in E$ with $x+y\in E$ such that $s(x)= t_1$ and $s(y)=t_2.$ Since $x \in E_{t_1}$ and $y \in E_{t_2},$ due to (b), we have $t_1+t_2 \le u.$ Hence, $s(x+y)=s(x)+s(y).$ There exists a unique $t \in [0,u]_H$ such that $0\in E_t.$ For every $x \in E_u,$ $x+0=x$, thus by (b), $t+u \le u$ which yields $t = 0$ and therefore, $s(0)=0$ and $s(1)=u.$ In other words, $s$ is an $(H,u)$-valued state.

Finally, let $\mathcal D=(E_t: t \in [0,u]_H)$ be an $(H,u)$-decomposition on $E$. The mapping $f(\mathcal D) =s$, where $s(x)=t$ iff $x \in E_t$ for $t \in [0,u]_H$ defines by (i) and (ii) a one-to-one correspondence in question.
\end{proof}

We say that an $(H,u)$-decomposition $(E_t: t \in [0,u]_H)$ of $E$ is {\it ordered} if, for $s<t,$ $s,t \in [0,u]_H,$ we have $E_s \leqslant E_t.$

\begin{theorem}\label{th:4.2}
Let $(E_t: t \in [0,u]_ H)$ be an $(H,u)$-decomposition of an effect algebra $E$, where $(H,u)$ is an Abelian unital po-group. Then $(E_t: t \in [0,u]_H)$ is ordered if and only if $E_{w}+ E_{v}$ exists in $E$ whenever $w+v<u$ for  $w,v \in [0,u]_H.$

In any such case,
\begin{itemize}
\item[{\rm (i)}] $E_{0}+E_0=E_0 \subseteq \Infinit(E)$ and $E_0$ is a Riesz ideal of $E$. $E_0$ and $E_u$ are directed. In addition, if $\Gamma(H,u)$ is Archimedean, then $E_0=\Infinit(E)$.

\item[{\rm (ii)}] $E_{s}+ E_{v}=E_{s+v}$ whenever $s+v<u$.

\item[{\rm (iii)}] If $t+v>u$, for any $x\in E_{t}$ and $y\in E_{v},$ then $x+ y$  does not exist in $E$.
\end{itemize}

\end{theorem}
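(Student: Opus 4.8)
The plan is to treat the equivalence first and then deduce (i)--(iii) under the assumption that the decomposition is ordered. The single tool I would use throughout is the translation of ``$x+y$ is defined'' into ``$x\le y^-$'', combined with the block rule $E_t^-=E_{u-t}$: this converts every assertion about definedness of a sum into an assertion about comparability of two blocks, and conversely.

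For the equivalence: assuming orderedness, if $w+v<u$, $x\in E_w$, $y\in E_v$, then $w<u-v$ and $y^-\in E_{u-v}$, so orderedness gives $x\le y^-$ and hence $x+y$ exists. Conversely, assuming the existence condition, for $s<t$ with $x\in E_s$, $y\in E_t$ I would note $y^-\in E_{u-t}$ and $s+(u-t)=u-(t-s)<u$; the hypothesis then makes $x+y^-$ defined, i.e. $x\le y$, so $E_s\leqslant E_t$.

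For (ii), the inclusion $E_s+E_v\subseteq E_{s+v}$ is exactly rule (b). For the reverse inclusion I would fix $z\in E_{s+v}$ and any $x\in E_s$; when $v>0$ orderedness gives $x\le z$, so $y:=z-x$ is defined, and since $x+y=z\in E_{s+v}$, rule (b) together with cancellation in $H$ forces $y\in E_v$, whence $z=x+y\in E_s+E_v$; the case $v=0$ is handled by $z=z+0$. For (i), $E_0+E_0=E_0$ follows from rule (b) applied to $0+0=0<u$ (and $z=z+0$); iterating shows $nx\in E_0$ for all $n$, so $E_0\subseteq\Infinit(E)$. Downward closure of $E_0$ comes from the fact that $a\le b\in E_0$ with $a\in E_s$ forces $s+s'=0$ for the block $E_{s'}$ containing $b-a$, and $s,s'\ge 0$ then give $s=0$; together with closure under $+$ this makes $E_0$ an ideal. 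Directedness of $E_0$ uses $a+b$ as an upper bound and $0$ as a lower bound, and $E_u=E_0^-$ is directed by complementation. The Archimedean addendum follows since $x\in\Infinit(E)\cap E_t$ yields $nt\le u$ for all $n$, making $t$ infinitesimal in $\Gamma(H,u)$ and hence $t=0$.

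Part (iii) is just the contrapositive of rule (b). The step I expect to be the real obstacle is showing that $E_0$ is a Riesz ideal without any RDP hypothesis on $E$. Here I would argue by cases on the blocks of $a$ and $b$ for a given $i\in E_0$ with $i\le a+b$: if $a\in E_s$ with $s>0$ then orderedness already gives $i\le a$, so $(i_a,i_b)=(i,0)$ works (and symmetrically if the block of $b$ is nonzero). The remaining case $a,b\in E_0$ is resolved by the observation that the Riesz condition requires only $i\le i_a+i_b$, not equality, so the choice $(i_a,i_b)=(a,b)\in E_0\times E_0$ suffices.
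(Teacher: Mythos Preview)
Your proof is correct and takes essentially the same route as the paper's: both directions of the equivalence are handled by translating ``$x+y$ is defined'' into a block comparison via $E_t^-=E_{u-t}$, and the Riesz-ideal verification is the same three-case split on whether $a,b$ lie in $E_0$ (your choice $(i_a,i_b)=(i,0)$ when $a\notin E_0$ is a harmless variant of the paper's $(a,i)$ when $b\notin E_0$). The only cosmetic difference is that the paper routes the block-index bookkeeping through the $(H,u)$-valued state $s$ of Theorem~\ref{th:4.1}, whereas you invoke rule (b) and cancellation in $H$ directly; you also make explicit the $v=0$ case in (ii), which the paper's ``$x<z$'' glosses over.
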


\begin{proof}
If $(E_t:t \in [0,u]_H)$ is an $(H,u)$-decomposition, by Theorem \ref{th:4.1}, there is a unique  $(H,u)$-valued state $s$ such that $s(E_t)=t$ for each $t \in [0,u]_H.$

Assume $(E_t: t \in [0,u]_H)$ is ordered. Choose $t,v\in [0,u]_H$ with $t+v<u.$ We have that $t<u-v$, and  $E_{t}\leqslant E_{u-v}=E_{v}^{-},$ which implies that $E_{t}+
E_v$ exists and we show $E_{t}+ E_{v}=E_{t+v}.$ Indeed, for any $a\in E_{t}$ and any $b\in E_{v},$  we have $s(a+ b)=t+v$,
which implies that $a+ b\in E_{t+v}.$ Conversely, let $c\in E_{t+v}.$ For any $a\in E_{t}$, we have that $a\le c.$ Then there exists an element  $b\in E$ such that $a+ b=c.$ Hence, $s(a+ b)=s(a)+s(b)=t+v,$ then $s(b)=v$, which implies that $b\in E_{v}.$

Now let $E_t + E_w$ exist in $E$ for $t+w<u,$ $t,w \in [0,u]_H.$ Choose $y <v,$ $y,v \in [0,u]_H.$ Then $y +(u-v)<u$ so that $E_y + E_{u-v}=E_y+E_v^-$ exists in $E$ which yields $E_y \leqslant E_v.$

(i) For any $x, y\in E_{0},$ we have that $x+ y$ exists in $E$ because $x\le y^-\in E_0^-$. In addition, we have $s(x+y)=s(x)+s(y)=0$ and $x+y \in E_0$ which implies
$E_{0}\subseteq \Infinit(E).$ Now if $x\in E_0$ and, for $z\in E$, we have $z\le x$, then $0\le s(z)\le s(x)=0$ proving $E_0$ is an ideal of $E$.

Now let $i\in E_0$ and $i \le a+b$ for some $a,b \in E$. There are three cases (1) $a,b \in E_0$, then the statement is evident. (2) Only one from $a,b$ is in $E_0$. Without loss of generality, we can assume $a\in E_0$ and $b\notin E_0$. Then $i\le b$ and $i \le a+i \le a+b$. Finally, the third case (3) $a,b \notin E_0$. Then $i\le a,b$ and $i \le i+i \le a+b$.

Let $x,y \in E_0$, then $0\le x,y$ and $x,y \le x+y$ which yields $E_0$ is directed. Since $E_u=E_0^-$, we have that also $E_u$ is directed.

Now let $\Gamma(H,u)$ be Archimedean. For any  $x\in \Infinit(E),$ we have that $mx$  is defined in $E$ for each integer $m\ge 1.$ Then $s(mx)=ms(x) \leqslant u$ which implies $s(x)=0$ and $x\in E_{0}.$

(ii) Assume that $a\in E_{s}$ and $b\in E_{v}$ for $s+v<u.$ Then
$a+ b$ exists and $s(a+ b)= s+v,$ and so
$a+ b\in E_{s+v}.$ Conversely, let $z\in E_{s+v},$ then for any
$x\in E_{s}$, $x< z$, so that $z=x+(z-x),$ by
$s(z)=s(x)+s(z-x),$ which implies that $z-x\in E_{v}.$

(iii) Assume that $t+v>u$, $x\in E_{t},$ $y\in E_{v},$ and
$x+ y$ exists in $E$. Then we have  $s(x+ y)= s(x)+s(y)=t+v>u$  which is absurd.
\end{proof}

We note that nevertheless $E_0$ in Theorem \ref{th:4.2} is an ideal, it is not necessary maximal. Indeed, take the effect algebra from Example \ref{ex:3.1}, then the ideal $I_1$ is such a case of an ordered $(H,u)$-decomposition. A characterization when $E_0$ is a maximal ideal will be shown in Proposition \ref{pr:6.2} below.

By (i) of Theorem \ref{th:4.2}, we see that $E_0$ and $E_u$ are always directed when the $(H,u)$-decomposition is ordered. Motivating by this, we say that an $(H,u)$-decomposition $(H_t: t \in [0,u]_H)$ of an effect algebra $E$ is {\it directed} if any $E_t$, $t \in [0,u]_H$, is directed. For example, (1) if $G$ is a directed Abelian po-group and $(H,u)$, unital Abelian po-group, and $E=\Gamma(H \lex G,(u,0))$, then every $E_t=\{(t,g): (t,g)\in E\}$ is directed. (2) If $(H,u)=(\frac{1}{n}\mathbb R,1)$, $E$ satisfies RDP, then any ordered $(\frac{1}{n}\mathbb R,1)$-decomposition of $E$ is directed, \cite[Prop 5.12]{DXY}.

\begin{theorem}\label{th:4.3}
Let $(E_t: t \in [0,u]_ H)$ be an ordered and directed $(H,u)$-decomposition of an effect algebra $E$, where $(H,u)$ is an Abelian unital po-group. Then
\begin{itemize}
\item[{\rm (i)}] $E/E_0\cong \Gamma(H,u)$.

\item[{\rm (ii)}]
If $(E_t: t \in [0,u])$ and $(E_t': t \in [0,u])$ are two ordered and directed $(H,u)$-decompositions of $E$, then $E_t=E_t'$ for each $t \in [0,u]_H.$

\end{itemize}
\end{theorem}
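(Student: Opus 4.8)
The plan is to route everything through the $(H,u)$-valued state attached to the decomposition by Theorem \ref{th:4.1}, so that both parts become statements about a single state and the quotient it induces.

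For (i), let $s\colon E\to[0,u]_H$ be the $(H,u)$-valued state with $E_t=s^{-1}(\{t\})$ produced by Theorem \ref{th:4.1}; it is surjective and $E_0=\Ker(s)$. By Theorem \ref{th:4.2}(i), $E_0$ is a Riesz ideal, so $E/E_0$ is an effect algebra and $\bar s(a/E_0):=s(a)$ is the candidate isomorphism onto $\Gamma(H,u)=[0,u]_H$. The heart of (i) is to identify the congruence $\sim_{E_0}$ with the kernel congruence of $s$, i.e.\ $a\sim_{E_0}b$ if and only if $s(a)=s(b)$. The forward implication is immediate after applying $s$ and using $s(E_0)=\{0\}$. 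For the converse I would use directedness: if $s(a)=s(b)=t$, then $a,b\in E_t$, and since $E_t$ is directed there is a common lower bound $c\in E_t$ with $c\le a$ and $c\le b$; then $a-c,b-c\in E_0$ and the identity $a-(a-c)=c=b-(b-c)$ witnesses $a\sim_{E_0}b$. This makes $\bar s$ a well-defined bijective homomorphism. To upgrade it to an isomorphism I must check that $\bar s$ reflects the partial sum: given $s(a)+s(b)\le u$, if $s(a)+s(b)<u$ the sum $a+b$ already exists by Theorem \ref{th:4.2}(ii); if $s(a)+s(b)=u$, then $a$ and $b^{-}$ lie in the same fibre $E_{s(a)}$, and directedness supplies $c\le a$, $c\le b^{-}$ in that fibre, so $c+b$ is defined while $c/E_0=a/E_0$, exhibiting the required sum in $E/E_0$.

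For (ii), by Theorem \ref{th:4.1} the two decompositions correspond to ordered directed $(H,u)$-valued states $s,s'$, and it suffices to prove $s=s'$. First I would observe that both $E_0=\Ker(s)$ and $E_0'=\Ker(s')$ are strict ideals: by part (i), $x/E_0<y/E_0$ is equivalent to $s(x)<s(y)$, and the ordered hypothesis then forces $x<y$. By Proposition \ref{pr:3.3} the strict ideals form a chain, so without loss of generality $E_0\subseteq E_0'$. Since the $E_0$-congruence then refines the $E_0'$-congruence, $s'$ factors through $s$, yielding a surjective endomorphism $\phi\colon\Gamma(H,u)\to\Gamma(H,u)$ with $\phi(u)=u$ and $s'=\phi\circ s$; its kernel $\Ker(\phi)=s(E_0')$ is an ideal contained in $\Infinit(\Gamma(H,u))$, because $E_0'\subseteq\Infinit(E)$ by Theorem \ref{th:4.2}(i) and $s$ preserves infinitesimality. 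The ordered hypothesis upgrades this to $\phi\equiv 0$ on the whole ideal generated by any $t\in\Ker(\phi)$.

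I expect the genuine obstacle to be the final rigidity step, namely forcing $\phi=\mathrm{id}$. It splits into two points. Point (a): $\Ker(\phi)=\{0\}$, i.e.\ a surjection $\Gamma(H,u)\to\Gamma(H,u)$ fixing the strong unit $u$ cannot collapse a nontrivial ideal of infinitesimals; this is precisely where the strong unit and surjectivity onto the \emph{entire} interval must be played against each other, and it is the hard kernel of the argument. Point (b): even once $E_0=E_0'$, one has $s'=\phi\circ s$ with $\phi$ a $u$-fixing order-automorphism of $\Gamma(H,u)$, and \emph{ordered} and \emph{directed} are invariant under any such automorphism, so these hypotheses alone do not pin down $\phi$; concluding $\phi=\mathrm{id}$ (hence $E_t=E_t'$ for every $t$) therefore requires that $\Gamma(H,u)$ admit no nontrivial $u$-fixing order-automorphism. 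Thus the proof of (ii) succeeds cleanly exactly to the extent that $(H,u)$ is order-rigid, and it is at steps (a)–(b) that additional structure of $(H,u)$ beyond being a bare unital po-group has to be invoked.
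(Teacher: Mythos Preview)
Your treatment of part (i) is essentially identical to the paper's: both identify $\sim_{E_0}$ with the kernel congruence of the associated $(H,u)$-valued state, using downward directedness of the fibres for the nontrivial implication, and then read off the isomorphism $E/E_0\cong\Gamma(H,u)$. Your extra care in checking that $\bar s$ reflects summability is a detail the paper omits.

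For part (ii) you have been more scrupulous than the paper, and your concern at point (b) is not merely a difficulty to be overcome but an actual obstruction. The paper's argument runs as follows: from $E/E_0\cong\Gamma(H,u)\cong E/E_0'$ it asserts the existence of an isomorphism $\phi\colon E/E_0\to E/E_0'$ with $\phi\circ\pi_{E_0}=\pi_{E_0'}$, deduces $E_0=E_0'$, and then claims that $x\in E_h\cap E'_{h'}$ forces $h=h'$. The commutation $\phi\circ\pi_{E_0}=\pi_{E_0'}$ is precisely the statement $s=s'$ one is trying to prove, so this step is unjustified; and the final claim $h=h'$ is likewise unsupported. In fact the statement fails without rigidity of $\Gamma(H,u)$: take $(H,u)=(\mathbb Z^2,(1,1))$ with the product order and $E=\Gamma(\mathbb Z^2,(1,1))$, the four-element Boolean algebra. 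The identity $s(a,b)=(a,b)$ and the swap $s'(a,b)=(b,a)$ are both surjective $(H,u)$-valued states, and the corresponding decompositions (each fibre a singleton) are ordered and directed, yet $E_{(1,0)}=\{(1,0)\}\ne\{(0,1)\}=E'_{(1,0)}$. So your conclusion that (ii) goes through exactly when $\Gamma(H,u)$ admits no nontrivial $u$-fixing order-automorphism is the correct diagnosis; the paper's proof has a genuine gap at this point.
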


\begin{proof}
(i) By Theorem \ref{th:4.2}(i), $E_0$ is a Riesz ideal of $E$ and by \cite[Cor 3.1.17]{DvPu}, $E/E_0$ is an effect algebra. We have $x\sim_{E_0} y$ iff $x-a=y-b$ for some $a,b \in E_0$ and $a\le x$, $y\le b$. This is entails $x,y \in E_h$ for some $h \in [0,u]_H$. Conversely, let $x,y \in E_h$ for some $h\in [0,u]_H$. Since $E_h$ is directed, there is an element $z\in E_h$ such that $z\le z,y$. Then both $a=x-z$, $b=y-z$ belong to $E_0$ and $x-a=x-(x-z)=z=y-b$, so that $x\sim_{E_0} y$. We define a mapping $\phi: E/E_0\to \Gamma(H,u)$ by $\phi(x/E_0)=h$ iff $x \in E_h$. The mapping $\phi$ is an isomorphism in question. In addition, $\{x\in E: \phi(x/E_0)=h\}=E_h$ for every $h\in [0,u]_H$.

(ii) We assert  $E_0=E_0'$. Indeed, let $\pi_{E_0}: E\to E/E_0$ and $\pi_{E_0'}:E\to E/E_0'$ denote the canonical projections. By (i), $E/E_0\cong \Gamma(H,u)\cong E/E_0'$, so that there is an isomorphism $\phi: E/E_0\to E/E_0'$ with $\phi\circ \pi_{E_0} = \pi_{E_0'}.$ Hence, $E_0=\pi_{E_0}^{-1}(\{0\})=\pi_{E_0}^{-1}(\phi^{-1}(\{0\}))= \pi_{E_0'}^{-1}(\{0\})=E_0'$.

Now if $x\sim_{E_0} y$, then $x,y \in E_h$ for some $h \in [0,u]_H$, as well as $x\sim_{E_0'} y$ implies $x,y \in E'_{h'}$ and $h=h'$ which implies $E_h = E_h'$ for any $h \in [0,u]_H$.
\end{proof}

Since $\Gamma(H\lex G,(u,0))$ is an interval effect algebra, it has at least one state. In the next theorem, we show a criterion when it has only one state.

\begin{theorem}\label{th:4.4}
Let $E =\Gamma(H\lex G,(u,0))$, where $(H,u)$ is an Abelian unital po-group with \RDP\, and $G$ is a directed po-group with \RDP. Then $E$ has a unique state if and only if
$\Gamma(H,u)$ has a unique state $s_H$. In such a case, if $s_H$ is a unique state on $\Gamma(H,u)$,
then $E$ has also a unique state $s_0$, namely $s_0(h,g)=s_H(h,g)$, $(h,g)\in E$.
\end{theorem}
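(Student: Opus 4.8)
The plan is to reduce the whole question to the quotient $E/E_0$, where $E_0=\{(0,g):g\in G^+\}$, by exploiting the fact that every state must annihilate $E_0$. First I would record the ambient structure: since $G$ is directed, the Goodearl criterion (its condition (ii)) guarantees that $H\lex G$, and hence $E$, satisfies \RDP; in particular $E$ is an interval effect algebra and admits at least one state, and the same holds for $\Gamma(H,u)$. The decomposition $E_t=\{(t,g):(t,g)\in E\}$, $t\in[0,u]_H$, is an ordered and directed $(H,u)$-decomposition, so Theorem~\ref{th:4.3}(i) supplies an isomorphism $\phi\colon E/E_0\to\Gamma(H,u)$ characterized by $\phi(x/E_0)=h$ exactly when $x\in E_h$; moreover $E_0$ is a Riesz ideal contained in $\Infinit(E)$.

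The heart of the argument is that every state on $E$ vanishes on $E_0$. Indeed, if $a\in\Infinit(E)$ then $na$ is defined for every $n\ge 1$, so $n\,s(a)=s(na)\le 1$, which forces $s(a)=0$; since $E_0\subseteq\Infinit(E)$, any state $s$ kills $E_0$. (This is exactly the content of Corollary~\ref{co:3.4} applied to the strict ideal $E_0$.) Using the description of the congruence $\sim_{E_0}$, namely $x\sim_{E_0}y$ iff $x-e=y-f$ with $e,f\in E_0$, $e\le x$, $f\le y$, I would then check that $s(x)=s(y)$ whenever $x\sim_{E_0}y$, so that $s$ factors as $s=\bar s\circ\pi_{E_0}$ for a unique state $\bar s$ on $E/E_0$. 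Conversely, for any state $t$ on $E/E_0$ the composite $t\circ\pi_{E_0}$ is a state on $E$, since $\pi_{E_0}$ is an effect-algebra homomorphism. Thus $t\mapsto t\circ\pi_{E_0}$ is a bijection between the states of $E/E_0$ and the states of $E$.

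Transporting this bijection across $\phi$ yields a bijection between the states of $E$ and the states of $\Gamma(H,u)$, which immediately gives the equivalence: $E$ has a unique state if and only if $\Gamma(H,u)$ has a unique state. For the explicit formula, if $s_H$ is the unique state on $\Gamma(H,u)$, then the unique state on $E$ is $s_0=s_H\circ\phi\circ\pi_{E_0}$; and since $\phi(\pi_{E_0}(h,g))=h$ for $(h,g)\in E_h$, that is, $h$ is the first coordinate, I obtain $s_0(h,g)=s_H(h)$, which is the claimed expression.

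I expect the main obstacle to be the factorization step: verifying that annihilation on $E_0$ forces a state to be constant on the $\sim_{E_0}$-classes and hence to descend to a genuine state on $E/E_0$. This is precisely where the Riesz-ideal structure of $E_0$ and the explicit form of the congruence are needed; once it is in place, the uniqueness transfer is purely formal via the isomorphism $E/E_0\cong\Gamma(H,u)$ of Theorem~\ref{th:4.3}.
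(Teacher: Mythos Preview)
Your argument is correct, but it follows a different route from the paper's own proof of Theorem~\ref{th:4.4}. The paper proves the equivalence by appealing to Goodearl's numerical criterion for uniqueness of a state \cite[Thm~4.19]{Goo}: there exist integers $a\ge b>0$ such that for each element $x$ one has $nbx\le na(u-x)$ or $nb(u-x)\le nax$ for some $n$. The proof then checks directly that these inequalities transfer from $[0,u]_H$ to $E$ (and back) by manipulating the lexicographic order on pairs $(h,g)$. Your approach instead shows that every state on $E$ annihilates $E_0\subseteq\Infinit(E)$ and therefore factors through $E/E_0\cong\Gamma(H,u)$, producing a bijection between the two state spaces; uniqueness on one side is then equivalent to uniqueness on the other.

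What your approach buys is a stronger conclusion with less machinery: you obtain the full one-to-one correspondence between states on $E$ and on $\Gamma(H,u)$, not merely the equivalence of uniqueness, and you avoid invoking \cite[Thm~4.19]{Goo}. In fact this is precisely the strategy the paper adopts in the very next result, Theorem~\ref{th:4.5}, which it introduces with the remark that ``Theorem~\ref{th:4.4} can be proved also using different arguments.'' So your proposal is essentially the paper's alternative proof via Theorem~\ref{th:4.5}, rather than its primary proof of Theorem~\ref{th:4.4}. The paper's direct route has the minor advantage of not needing Theorem~\ref{th:4.3}(i) or the congruence description, but at the cost of the somewhat opaque Goodearl criterion.
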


\begin{proof}
Let $\Gamma(H,u)$ have a unique state.
According to Theorem \ref{th:2.1} and \cite[Thm 4.19]{Goo}, $\Gamma(H,u)$ has a unique state iff there exist positive integers $a\ge b$ such that, given $h \in [0,u]_H$, there is some positive integer $n$ for which either $nbh\le na(u-h)$ or $nb(u-h)\le nah$ (calculated in the po-group $H$). If $h=0$, we have $nb(0,g)<n(a+1)(u,-g)$ $(g \in G^+)$ and if $h=u$, then $nb(u-h,-g)< n(a+1)(h,g)$ $(g \in G^+)$. Now let $0<h<u$.
Having these $a,b,n$ we have $nbh<n(a+1)(u-h)$ or $nb(u-h)<n(a+1)h$. Therefore, $nb(h,g)<n(a+1)(u-h,-g)$ or $nb(u-h,-g)<n(a+1)(h,g)$ $(g \in G)$. Therefore, if $\Gamma(H,u)$ has a unique state $s_H$, then $E$ has also a unique state, $s_0$ say. Then $s_0(h,g):=s_F(h)$, $(h,g)\in E$. Indeed, if $(E_h: h \in [0,u]_H)$ is an $(H,u)$-decomposition of $E$, by Theorem \ref{th:4.1}, there is an $(H,u)$-valued state on $E$. Then $s_H\circ s$ is a unique state on $E$.

Conversely, let $E$ have a unique state. Using the criterion \cite[Thm 4.19]{Goo}, there exist positive integers $a\ge b$ such that, given $(h,g)\in E$, there is an integer $n$ for which either $nb(h,g)\le na(u-h,-g)$ or $nb(u-h,-g)\le na(h,g)$. Then for these $a,b,n$ we have $nbh\le na(u-h)$ or $nb(u-h)\le nah$, proving $\Gamma(H,u)$ has a unique state.
\end{proof}

For example, if $(H,u)$ is a linearly ordered po-group with RDP, then $\Gamma(H,u)$ has a unique state. Consequently, $E=\Gamma(H\lex G,(u,0))$ has also a unique state however, $E$ is not necessarily linearly, e.g. if $G$ is not linearly ordered.

Theorem \ref{th:4.4} can be proved also using different arguments:

\begin{theorem}\label{th:4.5}
Let $E =\Gamma(H\lex G,(u,0))$, where $(H,u)$ is an Abelian unital po-group with \RDP\, and $G$ is a directed po-group with \RDP. Then there is a one-to-one correspondence between states on $E$ and on $\Gamma(H,u)$, respectively.
\end{theorem}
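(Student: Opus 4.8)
The plan is to establish the bijection between states on $E=\Gamma(H\lex G,(u,0))$ and states on $\Gamma(H,u)$ by constructing explicit maps in both directions and showing they are mutually inverse. Recall from Corollary \ref{co:3.4} and the discussion of the ideal $E_0=\{(0,g):g\in G^+\}$ that every state must vanish on the infinitesimals, and that $E_0$ is exactly the set of infinitesimals (by Theorem \ref{th:4.2}(i), since we can build an ordered directed $(H,u)$-decomposition $E_t=\{(t,g):(t,g)\in E\}$ and the second coordinate carries no archimedean information). This is the structural fact that forces every state on $E$ to factor through the quotient $E/E_0\cong\Gamma(H,u)$ given by Theorem \ref{th:4.3}(i).

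First I would define the map from states on $\Gamma(H,u)$ to states on $E$. Given a state $s_H$ on $\Gamma(H,u)$, set $s_0(h,g):=s_H(h)$ for every $(h,g)\in E$. I must check that $s_0$ is a well-defined state: normalization $s_0(u,0)=s_H(u)=1$ is immediate, and additivity follows because whenever $(h_1,g_1)+(h_2,g_2)$ is defined in $E$ the first coordinates add to $h_1+h_2\le u$, so $(h_1,h_2)$ is a legitimate sum in $\Gamma(H,u)$ and $s_H$ additivity transfers. The only subtlety is that $(h,g)\in E$ need not force $h\in[0,u]_H$ at the endpoints: when $h=0$ we need $g\ge 0$ and when $h=u$ we need $g\le 0$, but in both boundary cases $h$ itself lies in $[0,u]_H$, so $s_H(h)$ is defined and the formula is legitimate.

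Next I would define the reverse map. Given a state $s$ on $E$, I would use that $s$ vanishes on $E_0$ (by Corollary \ref{co:3.4}, since $E_0\subseteq\Infinit(E)$) to conclude that $s$ is constant on each congruence class of $\sim_{E_0}$; hence $s$ descends to a state $\bar s$ on $E/E_0$, and composing with the isomorphism $\phi:E/E_0\to\Gamma(H,u)$ of Theorem \ref{th:4.3}(i) produces a state $s_H:=\bar s\circ\phi^{-1}$ on $\Gamma(H,u)$. Concretely $s_H(h)=s(h,g)$ for any $g$ with $(h,g)\in E$, independence of $g$ being exactly the vanishing of $s$ on $E_0$. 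Finally I would verify the two compositions are identities: starting from $s_H$, forming $s_0$ and then restricting to $\Gamma(H,u)$ returns $s_H$ by construction; starting from $s$, passing to $s_H$ and back to $s_0$ returns $s$ precisely because $s(h,g)=s_H(h)=s(h,g')$ is forced by $E_0$-invariance.

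The main obstacle I anticipate is not any single step but rather justifying carefully that $s$ really is $E_0$-invariant, i.e. that a state on $E$ cannot distinguish $(h,g)$ from $(h,g')$. This rests on showing $E_0\subseteq\Infinit(E)$, which in turn uses that for $(0,g)$ with $g\ge 0$ every multiple $n(0,g)=(0,ng)$ stays below $(u,0)$ in the lexicographic order. Once infinitesimality is in hand, Corollary \ref{co:3.4} finishes it, so the technical heart is confirming that the lexicographic ordering makes the whole of $E_0$ infinitesimal; everything else is routine additivity bookkeeping.
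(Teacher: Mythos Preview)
Your argument is correct and follows essentially the same route as the paper: both directions rest on the fact that every state kills the ideal $E_0=\{0\}\times G^+$ because $E_0\subseteq\Infinit(E)$, so a state on $E$ factors through $E/E_0\cong\Gamma(H,u)$, while in the other direction one pulls back along the projection $(h,g)\mapsto h$. The paper phrases the first direction via the unique extension $\hat s$ of $s$ to the ambient group $H\lex G$ (showing $\hat s(0,g)=0$ directly from $ns(0,g)\le 1$), and the second via the $(H,u)$-valued state $\sigma$ of Theorem~\ref{th:4.1}; your quotient formulation is the same content in different packaging.

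One small overstatement: you assert that $E_0$ is \emph{exactly} $\Infinit(E)$, but Theorem~\ref{th:4.2}(i) gives equality only when $\Gamma(H,u)$ is Archimedean, which is not assumed here. Fortunately you only use the inclusion $E_0\subseteq\Infinit(E)$, so the proof is unaffected; just drop the word ``exactly.''
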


\begin{proof}
Let $s$ be a state on $E$. Then $s(h,g)=s(h,0)$ for all $(h,g)\in E$. Indeed, let $\hat s$ be the unique extension of $s$ to $(H\lex G,(u,0))$. Since $(0,g) \in E$ for any $g\ge 0$, we have $0\le s(n(0,g))=ns(0,g)\le 1$, so that $s(0,g)=0$. Consequently, $\hat s(0,-g)=0$ for any $g \ge 0$ and finally, $\hat s(0,g)=0$ for any $g \in G$. In addition, two states $s_1$ and $s_2$ on $E$ coincide iff $s_1(h,0)=s_2(h,0)$ for any $h \in \Gamma(H,u)$. Therefore, the state $s$ on $E$ induces a unique state $s_H$ on $\Gamma(H,u)$ such that $s_H(h)=s(h,0)$.

Conversely, let $s_H$ be a state on $\Gamma(H,u)$. Let $(E_h: h \in [0,u]_H)$ be the $(H,u)$-decomposition corresponding to $(H\lex G,(u,0))$. By Theorem \ref{th:4.1}, there is a unique $(H,u)$-valued measure $\sigma$ such that $\sigma^{-1}(\{h\})= E_h$. Then $s:=s_H \circ \sigma$ is a state on $E$. If $s_H'$ is any state on $\Gamma(H,u)$, then $s_H$ and $s'_H$ are different iff $s_H(h)\ne s'_H(h)$ for some $h \in [0,u]_H$. Then $s_H\circ \sigma$ and $s'_H\circ \sigma$ are different states on $E$. Hence, there is a one-to-one correspondence between the set of states on $E$ and $\Gamma(H,u)$.
\end{proof}

\section{Representation of Strong $(H,u)$-perfect Effect Algebras}

In this section, we give algebraic conditions that guarantee that an effect algebra is lexicographic with a nontrivial Abelian unital po-group $(H,u)$ with RDP.

We say that an effect algebra $E$ is $(H,u)$-{\it perfect} if there is an ordered and directed $(H,u)$-decomposition $(H_t: t \in [0,u]_H)$, where $(H,u)$ is a nontrivial Abelian unital po-group.

For example,  (i) if $(H,u)=(\mathbb Z,1)$ and $E$ is an MV-algebra, we are speaking on  a {\it perfect} MV-algebra, \cite{DiLe1}, (ii)  if $(\mathbb H,u)=(\frac{1}{n}\mathbb Z,1),$ a $(\frac{1}{n}\mathbb Z,1)$-perfect pseudo MV-algebra is said to be $n$-{\it perfect}, see \cite{Dv08}, (iii) if $\mathbb H$ is a subgroup of the group of real numbers $\mathbb R$, such that $1 \in \mathbb H$, $(\mathbb H,1)$-perfect pseudo MV-algebras are in \cite{264} called $\mathbb H$-{\it perfect} pseudo MV-algebras.

In what follows, we define a stronger version of $(H,u)$-perfect effect algebras, called strong $(H,u)$-perfect effect algebras, see \cite{Dv08, 264, DXY}. We say that a directed and ordered $(H,u)$-decomposition $(E_t: t \in [0,u]_H)$ is a {\it strong} $(H,u)$-{\it decomposition}, if there is a system $(c_t\in M: t \in [0,u]_ H)$ of elements of $E$ such that

\begin{enumerate}
\item[{\rm (i)}] $c_t \in M_t$ for each $t \in [0,u]_ H$;
\item[{\rm (ii)}] if $v+t \le u,$ $v,t \in [0,u]_ H,$ then $c_v+c_t=c_{v+t}$;
\item[{\rm (iii)}] $c_u=1.$
\end{enumerate}
Then $c_0+c_0=c_0$, so that $c_0=0.$

We say that an effect algebra $E$ with RDP is {\it strong} $(H,u)$-{\it perfect} if there is a strong $(H,u)$-decomposition $(E_t: t \in [0,u]_H)$ of $E$.

For example, if $G$ is a directed Abelian po-group with RDP and $(H,u)$ is an Abelian unital po-group with RDP, then $E=\Gamma(G\lex H,(u,0))$ is a strong $(H,u)$-perfect effect algebra, indeed, the system $(c_t: t \in [0,u]_H)$, where $c_t=(t,0)$, satisfy above conditions (i)--(iii).

Let $E$ be an effect algebra with RDP such that $(E_0,E_1)$ is a $(\mathbb Z,1)$-perfect decomposition of $E$. By Theorem \ref{th:4.2}(i), $E_0$ and $E_1$ are directed. If in addition, $E_0\leqslant E_1$, then $E$ is a strong $(\mathbb Z,1)$-perfect effect algebra because $c_0=0\in E_0$, $c_1=1\in E_1$ are necessary elements establishing that $(E_0,E_1)$ is a strong $(\mathbb Z,1)$-perfect decomposition. We call those effect algebras simply {\it perfect}, they are equivalent to those studied in \cite{177}.

\begin{theorem}\label{th:5.1}
Let $E$ be an effect algebra with \RDP and let $(H,u)$ be an Abelian unital po-group with \RDP. Then $E$ is strong $(H,u)$-perfect if and only if there exists an Abelian directed po-group $G$ such that $E \cong \Gamma(H\lex G,(u,0))$.

If this is a case, $G$ is unique and it satisfies \RDP.
\end{theorem}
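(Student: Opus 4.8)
The plan is to prove both implications, the reverse one (from the lexicographic form to strong $(H,u)$-perfectness) being routine and the forward one carrying the real content. For the reverse direction, assume $E\cong \Gamma(H\lex G,(u,0))$ with $G$ Abelian and directed. I would exhibit the canonical decomposition $E_t=\{(t,g):(t,g)\in E\}$, which is an ordered and directed $(H,u)$-decomposition (ordered because $s<t$ forces $(s,g)<(t,g')$ in the lexicographic order, directed because $G$ is directed), and take $c_t:=(t,0)$. Conditions (i)--(iii) of a strong decomposition are then immediate: $c_t\in E_t$, $c_v+c_t=(v+t,0)=c_{v+t}$ whenever $v+t\le u$, and $c_u=(u,0)=1$. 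Since $E$ has \RDP, this shows $E$ is strong $(H,u)$-perfect.

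For the forward direction, let $(E_t:t\in[0,u]_H)$ be a strong $(H,u)$-decomposition of $E$ with witnessing family $(c_t)$. Using Theorem \ref{th:2.1} I would fix the unigroup, i.e.\ the Abelian unital po-group with interpolation $(K,k)$ with $E=\Gamma(K,k)$, and let $\hat s:K\to H$ be the unique po-group homomorphism extending the $(H,u)$-valued state $s$ associated with the decomposition, so that $E_t=\{x\in E:\hat s(x)=t\}$. I then define $G:=E_0-E_0\subseteq K$ with positive cone $E_0$. The preliminary facts are: $E_0$ is closed under the everywhere-defined addition of infinitesimals and $E_0\cap(-E_0)=\{0\}$, so $E_0$ is a strict cone; $G\cap K^+=E_0$ (if $g=a-b\in G\cap K^+$ with $a,b\in E_0$, then $0\le g\le a\le k$ and $\hat s(g)=0$, whence $g\in E_0$); and $G$ is directed because $E_0$ is directed by Theorem \ref{th:4.2}(i) together with the directedness of all blocks. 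Thus $G$ is a directed Abelian po-group whose order is the restriction of that of $K$.

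Next I would define $\psi:E\to\Gamma(H\lex G,(u,0))$ by $\psi(x)=(t,x-c_t)$ for $x\in E_t$, and check it is an isomorphism. Well-definedness requires $x-c_t\in G$: since $E_t$ is directed, pick $w\in E_t$ with $w\le x,c_t$; then $x-w,c_t-w\in E_0$ and $x-c_t=(x-w)-(c_t-w)\in E_0-E_0=G$. That $\psi$ lands in $\Gamma(H\lex G,(u,0))$, that $\psi(1)=(u,0)$, and injectivity are straightforward. For surjectivity onto a point $(t,g)$ with $0<t<u$ I would use the ordered property: $E_0\leqslant E_t$ gives $b\le c_t$ for the negative part $b$ of $g=a-b$, so $c_t-b\in E_t$, and Theorem \ref{th:4.2}(ii) with $t+0<u$ yields $(c_t-b)+a\in E_t$, an element mapping to $(t,g)$; the boundary values $t=0,u$ are handled directly. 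Preservation of $+$ uses exactly the strong condition $c_s+c_v=c_{s+v}$, since for $x\in E_s$, $y\in E_v$ with $x+y$ defined one computes $\psi(x)+\psi(y)=(s+v,(x+y)-c_{s+v})=\psi(x+y)$. The step I expect to be the main obstacle is showing that $\psi$ \emph{reflects} the partial addition, in particular the boundary case $s+v=u$: here I must pass from the lexicographic inequality $g_x+g_y\le 0$ in $G$ to $x\le y^-$ in $K$, which is exactly where the identity $G\cap K^+=E_0$ (so that the order of $G$ is inherited from $K$) is needed, together with $y^-=c_s-g_y$ obtained from $c_s+c_v=c_u=1$.

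Finally, for the additional assertions: $G$ satisfies \RDP\ because its cone $E_0$ does. Indeed, given $a_1,a_2,b_1,b_2\in E_0$ with $a_1+a_2=b_1+b_2$, \RDP\ of $E$ furnishes a refinement $(c_{ij})$ in $E$, and since $E_0$ is a downward closed ideal with $c_{ij}\le a_i,b_j\in E_0$, the refinement lies in $E_0$; as $G$ is directed this yields \RDP\ for $G$. For uniqueness I would invoke Theorem \ref{th:4.3}(ii): the ordered directed $(H,u)$-decomposition of $E$ is unique, so $E_0$ is an invariant of the pair $(E,(H,u))$; since a directed po-group is determined by its positive cone, $G\cong E_0-E_0$ is unique up to isomorphism of po-groups. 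Equivalently, once $G$ has \RDP\ one checks that $(u,0)$ is a strong unit and that $H\lex G$ has \RDP, so $H\lex G$ is the unigroup of $E$ and is thus determined up to isomorphism by the categorical equivalence $\Gamma$.
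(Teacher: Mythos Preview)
Your proposal is correct and follows essentially the same approach as the paper: construct $G$ from the cone $E_0$ inside the unigroup $(K,k)$, define the map $x\mapsto (t,x-c_t)$, verify well-definedness via the directedness of each $E_t$, and deduce uniqueness from Theorem~\ref{th:4.3}(ii). The only differences are cosmetic: the paper invokes Birkhoff's theorem to produce $G$ abstractly before identifying it as a po-subgroup of $K$, whereas you build $G=E_0-E_0$ directly inside $K$ and explicitly check $G\cap K^+=E_0$; and you are more careful than the paper in verifying that the bijective homomorphism \emph{reflects} the partial addition (the boundary case $s+v=u$), a step the paper leaves implicit when it passes from ``bijective homomorphism'' to ``isomorphism''.
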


\begin{proof}
If there is a directed Abelian po-group $G$ such that $E$ is isomorphic to $\Gamma(H\lex G,(u,0))$, then $E$  has RDP and is a strong $(H,u)$-perfect effect algebra with RDP.

Conversely, assume that $E$ is a strong $(H,u)$-perfect effect algebra. There is a directed and ordered strong $(H,u)$-perfect decomposition $(H_t: t \in [0,u]_H)$ of $E$ and there is a directed unital Abelian po-group $(K,v)$ with RDP such that $E=\Gamma(K,v)$.

By (i) of Theorem \ref{th:4.2}, $E_0$ is an associative cancellative semigroup satisfying conditions of Birkhoff's Theorem  \cite[Thm XIV.2.1]{Bir}, \cite[Thm II.4]{Fuc},
which guarantee that $E_0$ is a positive cone of
a unique (up to isomorphism) Abelian directed po-group $G$. In addition, $G$ satisfies RDP while $E_0$ does.

Take the $(H,u)$-strong perfect effect algebra $\mathcal E_{H,u}(G)$ defined by
$$\mathcal E_{H,u}(G):=\Gamma(H\lex G,(u,0)),\eqno(5.1)
$$
and define a mapping $\phi: E \to \mathcal E_{H,u}(G)$ by
$$
\phi(x):= (t, x - c_t)\eqno (5.2)
$$
whenever $x \in E_t$ for some $t \in [0,u]_H,$ where $ x-c_t$ denotes the difference taken in the group $K$.

\vspace{2mm}
{\it Claim 1:} {\it  $\phi$ is a well-defined mapping.}
 \vspace{2mm}

Indeed, $E_0$  is in fact the positive cone of a directed po-group $G$ which is a po-subgroup of $K.$  Let $x \in E_t$ and $c_t \in E_t$. If $t=0$, then $\phi(x)= (0,x)$ and since $x \in E_0=G^+$, $\phi(x)$ is correctly defined. If $t=u$, then $\phi(x)=(u,x-1)$ and $x-1\in G^-$, so again $\phi(x)$ correct. Now let $0<t<u$.
Since every $E_t$ is directed, there is an element $a\in E_t$ such that $a\le x,c_t$. Then $x-c_t = x- a+a-c_t= (x-a)-(c_t-a)\in G$ because $x-a,c_t-a\in E_0=G^+$.

\vspace{2mm}
{\it Claim 2:} {\it The mapping $\phi$ is an injective and surjective homomorphism of effect algebras.}

\vspace{2mm}

We have $\phi(0)=(0,0)$ and $\phi(1)=(u,0).$ Let $x \in E_t.$ Then $x^- \in M_{u-t},$ and $\phi(x^-) =(u-t, x -  c_{u-t}) = (u,0)-(t,x - c_t)=\phi(x)^-.$
Now let $x,y \in E$ and let $x+y$ be defined in $E$. Then $x\in E_{t_1}$ and $y \in E_{t_2}$ and $t_1+t_2\le u$. Since $x\le y^-,$ we have $t_1 \le u-t_2$, so that $\phi(x) \le \phi(y^-)=\phi(y)^-$ which means  $\phi(x)+\phi(y)$ is defined in $\mathcal E_{H,u} (G).$ Then $\phi(x+y) = (t_1+t_2, x+y - c_{t_1+t_2}) =
(t_1+t_2, x+y -(c_{t_1} + c_{t_2}))= (t_1,x-c_{t_1}) + (t_2,y- c_{t_2})=\phi(x)+\phi(y).$ Hence, $\phi$ is a homomorphism of effect algebras.

Assume $\phi(x)=\phi(y)$ for some $x\in E_{t}$ and $y \in E_v.$ Then $(t,x-c_t)= (v, y - c_v)$ which yields $t=v$, $c_t=c_v$ and consequently, $x=y$, so that $\phi$ is injective.

To prove that $\phi$ is surjective, assume two cases: (i) Take $g \in G^+=E_0.$  Then $\phi(g)=(0,g).$ In addition $g^- \in M_u$ so that $\phi(g^-) = \phi(g)^-= (0,g)^- = (u,0)-(0,g)=(u,-g).$ (ii) Let $g \in G$ and $t$ with $0<t<u$ be given. Then $g = g_1-g_2,$ where $g_1,g_2 \in G^+=E_0.$ Since $c_t \in E_t,$ $g_1 + c_t$ exists in $E$ and it belongs to $E_t,$ and $g_2 \le g_1+c_t$ which yields $(g_1+c_t)- g_2 \in E_t.$  Hence, $g+c_t = (g_1 + c_t)- g_2 \in E_t$ which entails $\phi(g+c_t)=(t,g).$

Consequently, $E$ is isomorphic to the effect algebra $\mathcal E_{H,u}(G)$.

If $E \cong \Gamma(H\lex G',(u,0))$ for some  directed unital Abelian po-group $G'$ with RDP, then $(H\lex G,(u,0))$ and $(H\lex G',(u,0))$ are isomorphic unital po-groups in view of the categorical equivalence, see \cite[Thm 5.8]{177} or Theorem \ref{th:2.1}; let $\psi: \Gamma(H\lex G,(u,0)) \to \Gamma(H\lex G',(u,0))$ be an isomorphism of the lexicographic products. Hence, by Theorem \ref{th:4.3}(ii), we see that $\psi(\{(0,g): g \in G^+\})= \{(0,g'): g' \in G'^+\}$ which proves that $G$ and $G'$ are isomorphic po-groups.
\end{proof}

We note that in Theorem \ref{th:5.1} the case $G=O$, the zero po-group, is not excluded. It can happen if $E_0$ is the zero ideal of $E$.

In the rest of the section we will assume that $(H,u)$ is a fixed Abelian unital po-group with RDP. Theorem \ref{th:5.1} enables us to prove the following categorical equivalence of the category of strong $(H,u)$-perfect effect algebras with the category of directed Abelian po-groups with RDP.

Let $\mathcal {SPEA}_{H,u}$ be the category of strong $(H,u)$-perfect  effect algebras whose objects are strong $(H,u)$-perfect effect algebras and morphisms are homomorphisms of effect algebras. Now let $\mathcal{AG}$ be the category whose objects are directed Abelian po-groups with RDP  and morphisms are homomorphisms of po-groups.

Define a mapping $\mathcal E_{H,u}: \mathcal G \to  \mathcal {SPEA}_{H,u}$ as follows: for $G\in \mathcal{AG},$ let
$$
\mathcal E_{H,u}(G):= \Gamma(H\lex G,(u,0))
$$
and if $h: G \to G_1$ is a po-group homomorphism, then

$$
\mathcal E_{H,u}(h)(t,g)= (t, h(g)), \quad (t,g) \in \Gamma(H\lex G,(u,0)).
$$
It is evident that $\mathcal E_{H,u}$ is a functor.

\begin{proposition}\label{pr:5.2}
$\mathcal E_{H,u}$ is a faithful and full
functor from the category $\mathcal{AG}$ of directed po-groups with \RDP\,  into the
category $\mathcal{SPEA}_{H,u}$ of strong $(H,u)$-perfect effect algebras.
\end{proposition}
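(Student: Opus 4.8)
The plan is to verify the two categorical properties directly from the explicit description of the functor $\mathcal E_{H,u}$. Recall that \emph{faithful} means the assignment $h \mapsto \mathcal E_{H,u}(h)$ is injective on each hom-set $\mathrm{Hom}_{\mathcal{AG}}(G,G_1)$, and \emph{full} means it is surjective onto $\mathrm{Hom}_{\mathcal{SPEA}_{H,u}}(\mathcal E_{H,u}(G),\mathcal E_{H,u}(G_1))$. Faithfulness should be the easy part: if $h,h': G \to G_1$ induce the same morphism, then $(t,h(g))=(t,h'(g))$ for all $(t,g)\in\Gamma(H\lex G,(u,0))$, and in particular at $t=0$ we get $h(g)=h'(g)$ for every $g\in G^+$. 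Since $G$ is directed, $G = G^+ - G^+$, so $h$ and $h'$ agree on all of $G$ as group homomorphisms; hence $h=h'$.

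For fullness, I would start with an arbitrary effect-algebra homomorphism $\varphi:\mathcal E_{H,u}(G)\to\mathcal E_{H,u}(G_1)$ and reconstruct a po-group homomorphism $h:G\to G_1$ whose image under the functor is $\varphi$. The key observation is that the ideal $E_0=\{(0,g): g\in G^+\}$ is intrinsic: by Proposition~\ref{pr:3.5} it is the lexicographic ideal, and it coincides with the first nontrivial piece of the $(H,u)$-decomposition. The homomorphism $\varphi$ must respect the decomposition structure because it is required to be a morphism in $\mathcal{SPEA}_{H,u}$; in particular I expect $\varphi$ to carry $E_0$ into the corresponding ideal $\{(0,g'): g'\in G_1^+\}$ of the target, since these are exactly the sets of infinitesimals (by Theorem~\ref{th:4.2}(i)) and homomorphisms preserve infinitesimality. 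This lets me define $h$ on $G^+$ by $\varphi(0,g)=(0,h(g))$, check additivity of $h$ on $G^+$ from additivity of $\varphi$, and then extend $h$ uniquely to a group homomorphism on $G=G^+-G^+$ using directedness.

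Once $h:G\to G_1$ is defined, I would verify two things: that $h$ is order-preserving (immediate, since $g\ge 0$ forces $(0,g)\in E_0$ and $\varphi$ preserves order, giving $h(g)\in G_1^+$), and that $\mathcal E_{H,u}(h)=\varphi$ on all of $\Gamma(H\lex G,(u,0))$, not merely on $E_0$. For the latter, the strong decomposition elements $c_t=(t,0)$ are the lever: any element $(t,g)$ with $0<t<u$ can be written using the directedness of $E_t$ as $(t,g)=(0,g_1)+(t,0)-(0,g_2)$ in the ambient group, and since $\varphi$ must send $(u,0)\mapsto(u,0)$ and respect the $(H,u)$-valued state (equivalently, the first coordinate), one deduces $\varphi(t,0)=(t,0)$, whence $\varphi(t,g)=(t,h(g))$ by additivity. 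The main obstacle I anticipate is precisely this last step: showing that $\varphi$ fixes the section $t\mapsto(t,0)$ and acts only through the second coordinate. This requires knowing that $\varphi$ preserves the first-coordinate value $t$, which follows because a homomorphism of effect algebras induces a commuting map of the quotients $E/E_0\cong\Gamma(H,u)$ (Theorem~\ref{th:4.3}(i)) and the identity is the only such map fixing the structure; making this rigorous—rather than just plausible—is where the real work lies.
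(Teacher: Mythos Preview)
Your faithfulness argument and your strategy for fullness are exactly the paper's: the paper too reads off $h$ on $G^+$ from the action of the morphism on $\{0\}\times G^+$, extends by directedness, checks well-definedness and order-preservation, and then simply concludes with the line ``and $\mathcal E_{H,u}(h)=f$ as claimed.'' So the step you flag as the real obstacle---showing that the given morphism $\varphi$ preserves the first coordinate, i.e.\ $\varphi(t,g)=(t,h(g))$ for \emph{all} $t\in[0,u]_H$---is precisely the step the paper does not argue at all.

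Your worry is justified, and in fact the step fails for general $(H,u)$ with morphisms taken to be bare effect-algebra homomorphisms. Take $H=\mathbb Z^2$ with the coordinatewise order, $u=(1,1)$, and $G=G'$ any directed Abelian po-group with RDP (even $G=O$). The coordinate swap $((a,b),g)\mapsto((b,a),g)$ is an effect-algebra automorphism of $\Gamma(H\lex G,(u,0))$, but it is not of the form $\mathcal E_{H,u}(h)$ for any $h\colon G\to G$, since every such $\mathcal E_{H,u}(h)$ fixes the first coordinate. Hence $\mathcal E_{H,u}$ is not full on this hom-set. The source of the trouble is exactly what you suspected: $\Gamma(H,u)$ may have nontrivial automorphisms, so ``the identity is the only such map'' is false in general, and likewise the uniqueness claim in Theorem~\ref{th:4.3}(ii) breaks down here (there are two ordered directed $(H,u)$-decompositions of the four-element Boolean algebra). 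The infinitesimal argument you propose for $\varphi(E_0)\subseteq E'_0$ also needs $\Gamma(H,u)$ Archimedean to give $E_0=\Infinit(E)$ rather than just $E_0\subseteq\Infinit(E)$.

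Two natural repairs make the proposition go through along the lines you and the paper both intend: either restrict to $(H,u)$ for which $\Gamma(H,u)$ is rigid (for instance $H$ linearly ordered, which is the setting of the earlier papers this generalizes), or define morphisms in $\mathcal{SPEA}_{H,u}$ to be effect-algebra homomorphisms that respect the $(H,u)$-valued state (equivalently, send each slice $E_t$ into $E'_t$). Under either hypothesis your outline becomes a complete proof: once $\varphi$ is known to preserve the first coordinate, additivity on $(t,g)=(t,0)+(0,g_1)-(0,g_2)$ gives $\varphi(t,g)=(t,h(g))$ and fullness follows.
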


\begin{proof}
Let $h_1$ and $h_2$ be two morphisms from $G$
into $G'$, $G,G'\in \mathcal{AG}$, such that $\mathcal E_{H,u}(h_1) = \mathcal E_{H,u}(h_2)$. Then
$(0,h_1(g)) = (0,h_2(g))$ for each $g \in G^+$, consequently $h_1 =
h_2.$

To prove that $\mathcal E_{H,u}$ is a full  functor, suppose that

Now let $f: \Gamma(H\lex G, (u,0)) \to \Gamma(H\lex G', (u,0))$ be a morphism.  Then $f(0,g)
= (0,g')$ for a unique $g' \in G'^+$. Define a mapping $h:\ G^+ \to
G'^+$ by $h(g) = g'$ iff $f(0,g) =(0,g').$ Then $h(g_1+g_2) = h(g_1)
+ h(g_2)$ if $g_1,g_2 \in G^+.$
Assume now that $g \in G$ is arbitrary.  If $g=g_1-g_2=g_1'-g_2'$ where $g_1,g_2,g_1',g_2 \in G^+$, then $g_1 +g_2'=g_1'+g_2$ and $h(g_1)+ h(g_2')=h(g_1')+h(g_2)$ which shows that $h(g) = h(g_1) - h(g_2)$ is a
well-defined extension of $h$ from $G^+$ onto $G$.

Let $0\le g_1 \le g_2.$ Then $(0,g_1)\le (0,g_2),$
which means  $h$ is a mapping preserving the partial order.

Finally, we have proved that $h$ is a homomorphism of po-groups, and $\mathcal E_{H,u}(h) = f$ as claimed.
\end{proof}

We note that by a {\it universal group}  for an
effect algebra $E$ with RDP we mean a pair $(G,\gamma)$ consisting of a directed Abelian po-group $G$ with RDP and of a $G$-valued measure $\gamma :\, E\to G^+$
(i.e., $\gamma (a+b) = \gamma(a) + \gamma(b)$ whenever $a+b$ is
defined in $E$) such that the following conditions hold:  (i)
$\gamma(E)$ generates ${ G}$. (ii) If $K$ is a group and
$\phi:\, E\to K$ is a $K$-valued measure, then there is a group
homomorphism ${\phi}^*:{ G}\to K$ such that $\phi ={\phi}^*\circ
\gamma$.

Due to \cite[Thm 7.2]{DvVe2}, every  effect algebra with RDP admits a universal group,
which is unique up to isomorphism, and $\phi^*$ is unique. The
universal group for $E = \Gamma(G,u)$ is $(G,id)$, where $id$ is the
embedding of $E$ into $G$.

Let $\mathcal A$ and $\mathcal B$ be two categories and let $f:\mathcal A \to \mathcal B$ be a functor. Suppose that $g,h$ be two functors from $\mathcal B$ to $\mathcal A$ such that $g\circ f = id_\mathcal A$ and $f\circ h = id_\mathcal B,$ then $g$ is a {\it left-adjoint} of $f$ and $h$ is a {\it right-adjoint} of $f.$

\begin{proposition}\label{pr:5.3}
The functor $\mathcal  E_{H,u}$ from the
category $\mathcal{AG}$ into  $\mathcal{SPEA}_{H,u}$ has  a left-adjoint.
\end{proposition}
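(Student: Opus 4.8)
The plan is to construct the left-adjoint explicitly using the universal group functor, which is the natural candidate. Given a strong $(H,u)$-perfect effect algebra $E$, Theorem \ref{th:5.1} tells us $E\cong \Gamma(H\lex G,(u,0))$ for a unique directed Abelian po-group $G$ with RDP; so the object map of the left-adjoint should send $E$ to this group $G$. The key is to recognize $G$ intrinsically: by the proof of Theorem \ref{th:5.1}, $G$ is the unique directed po-group whose positive cone is the ideal $E_0$ of the ordered and directed strong $(H,u)$-decomposition, where $E_0=\pi_{E_0}^{-1}(\{0\})$ is canonically determined by $E$ via Theorem \ref{th:4.3}(ii). Thus I would define a functor $\mathcal P_{H,u}:\mathcal{SPEA}_{H,u}\to \mathcal{AG}$ on objects by $\mathcal P_{H,u}(E):=G$, where $G^+=E_0$.

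First I would pin down the object assignment and verify it lands in $\mathcal{AG}$: for $E$ strong $(H,u)$-perfect, $E_0$ is a cancellative directed positive cone with RDP by Theorem \ref{th:4.2}(i) and the construction in Theorem \ref{th:5.1}, so $G$ is a directed Abelian po-group with RDP. Next I would define $\mathcal P_{H,u}$ on morphisms. Given a homomorphism $f:E\to E'$ of strong $(H,u)$-perfect effect algebras, I must produce a po-group homomorphism $\mathcal P_{H,u}(f):G\to G'$. The natural choice is to restrict $f$ to $E_0$: one checks that $f$ carries infinitesimals to infinitesimals, hence $f(E_0)\subseteq E_0'$ by Theorem \ref{th:4.2}(i), and since $E_0=G^+$, $E_0'=G'^+$, the restriction $f|_{E_0}:G^+\to G'^+$ is additive and order-preserving, so it extends uniquely to a po-group homomorphism $G\to G'$ exactly as in the extension argument of Proposition \ref{pr:5.2}. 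Functoriality (preservation of identities and composition) is then routine.

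Finally I would establish the adjunction identity $\mathcal P_{H,u}\circ \mathcal E_{H,u}=id_{\mathcal{AG}}$. On objects this is immediate: for $G\in\mathcal{AG}$, the effect algebra $\mathcal E_{H,u}(G)=\Gamma(H\lex G,(u,0))$ has its canonical $E_0=\{(0,g):g\in G^+\}$, whose associated po-group is $G$ itself, so $\mathcal P_{H,u}(\mathcal E_{H,u}(G))=G$. On morphisms, for $h:G\to G_1$ the functor $\mathcal E_{H,u}(h)$ acts by $(t,g)\mapsto(t,h(g))$; restricting to $E_0$ and identifying $E_0$ with $G^+$ recovers $h$ on the positive cone, hence $\mathcal P_{H,u}(\mathcal E_{H,u}(h))=h$. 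This verifies the defining equation $g\circ f=id_{\mathcal A}$ with $f=\mathcal E_{H,u}$ and $g=\mathcal P_{H,u}$ in the paper's terminology, so $\mathcal P_{H,u}$ is a left-adjoint of $\mathcal E_{H,u}$.

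The main obstacle I anticipate is the morphism part: one must confirm that an arbitrary effect-algebra homomorphism $f:E\to E'$ really does respect the intrinsic decompositions, i.e. that it maps $E_0$ into $E_0'$ and that the induced map on cones extends to all of $G$. The containment $f(E_0)\subseteq E_0'$ relies on $E_0=\Infinit(E)$ together with the preservation of infinitesimals under homomorphisms (since $f(nx)=nf(x)$ whenever $nx$ is defined), and the extension from the cone to the full group uses the difference argument already deployed in Proposition \ref{pr:5.2}; both steps are manageable but must be checked carefully, since the uniqueness of the decomposition from Theorem \ref{th:4.3}(ii) is exactly what guarantees the construction is well-defined and canonical.
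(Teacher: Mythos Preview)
Your construction of $\mathcal P_{H,u}$ and the verification that $\mathcal P_{H,u}\circ\mathcal E_{H,u}=id_{\mathcal{AG}}$ are fine on objects, but the morphism part has a genuine gap. You justify $f(E_0)\subseteq E_0'$ by invoking $E_0=\Infinit(E)$ and the preservation of infinitesimals. However, Theorem~\ref{th:4.2}(i) only gives the inclusion $E_0\subseteq\Infinit(E)$ in general; the equality $E_0=\Infinit(E)$ is asserted there \emph{only} under the additional hypothesis that $\Gamma(H,u)$ is Archimedean. The proposition is stated for an arbitrary Abelian unital po-group $(H,u)$ with RDP, and such $(H,u)$ need not be Archimedean (e.g.\ $(H,u)=(\mathbb Z\lex\mathbb Z,(1,0))$, cf.\ Example~\ref{ex:3.1}). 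In that situation $\Infinit(E')$ can strictly contain $E_0'$, so the chain $f(E_0)\subseteq f(\Infinit(E))\subseteq\Infinit(E')$ does not land in $E_0'$ as you claim. Without $f(E_0)\subseteq E_0'$ your functor $\mathcal P_{H,u}$ is not defined on morphisms, and the argument collapses.

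The paper sidesteps this difficulty entirely by taking a different route: rather than building a retraction functor by hand, it shows that for each $E\in\mathcal{SPEA}_{H,u}$ there is a \emph{universal arrow} $(G,\gamma)$ from $E$ to $\mathcal E_{H,u}$, obtained directly from the universal group $(H\lex G,\gamma)$ of $E$ guaranteed by \cite[Thm~7.2]{DvVe2}. The universal property of the universal group produces, for any homomorphism $f':E\to\mathcal E_{H,u}(G')$, the required unique $f^*:G\to G'$ with $\mathcal E_{H,u}(f^*)\circ\gamma=f'$, and this is precisely the data of a left adjoint in the standard sense. This approach never needs to compare $E_0$ with $\Infinit(E)$ or to track where a general morphism sends $E_0$.
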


\begin{proof}
We show, for a strong $(H,u)$-perfect effect algebra $E$ with a strong $(H,u)$-decomposition $(E_t: t \in [0,u]_H)$ endowed with a family $(c_t: t \in [0,u]_H)$ satisfying conditions (i)--(iii), there is a universal arrow $(G,f)$, i.e., $G$ is an object in $\mathcal{AG}$ and $f$ is a homomorphism from the effect algebra
$E$ into ${\mathcal  E}_{H,u}(G)$ such that if $G'$ is an object from $\mathcal{AG}$ and $f'$ is a homomorphism from $E$ into ${\mathcal  E}_{H,u}(G')$, then
there exists a unique morphism $f^*:\, G \to G'$ such that ${\mathcal
E}_{H,u}(f^*)\circ f = f'$.

By Theorem \ref{th:5.1}, there is a unique (up to isomorphism of po-groups) Abelian directed  po-group $G$  such that $E \cong \Gamma(H \lex G,(u,0)).$ By \cite[Thm 7.2]{DvVe2}, $(H \lex G, \gamma)$ is a universal group for $E,$ where $\gamma: E \to  \Gamma(H \lex G, (u,0))$ is defined by $\gamma(a) = (t,a -c_t),$ if $a \in E_t.$
\end{proof}

Define a mapping ${\mathcal  P}_{H,u}: \mathcal  {SPEA}_{H,u}\to \mathcal{AG}$
via ${\mathcal  P}_{H,u}(E) := G$ whenever $(H\lex  G, f)$ is a
universal group for $E$. It is clear that if $f_0$ is a morphism
from the effect algebra $E$ into another one $F$ with RDP, then $f_0$ can be uniquely extended to a po-group homomorphism ${\mathcal  P}_{H,u} (f_0)$ from $G$ into $G_1$, where $(H
\lex G_1, f_1)$ is a universal group for the strong $(H,u)$-perfect
effect algebra $F$.

\begin{proposition}\label{pr:5.4}
The mapping ${\mathcal  P}_{H,u}$ is a functor from the
category $\mathcal {SPEA}_{H,u}$ into the category $\mathcal{AG}$ which is a left-adjoint of the functor ${\mathcal  E}_{H,u}.$
\end{proposition}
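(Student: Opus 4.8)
The plan is to confirm that $\mathcal P_{H,u}$ is a well-defined functor and then to recognize it as the left-adjoint already produced in Proposition~\ref{pr:5.3}. On objects there is nothing to do beyond Theorem~\ref{th:5.1}: a strong $(H,u)$-perfect effect algebra $E$ determines, uniquely up to isomorphism in $\mathcal{AG}$, the directed Abelian po-group $G$ with \RDP\ for which $E\cong\Gamma(H\lex G,(u,0))$, and $(H\lex G,\gamma_E)$ is a universal group for $E$ by \cite[Thm 7.2]{DvVe2}, where $\gamma_E(a)=(t,a-c_t)$ for $a\in E_t$. Thus $\mathcal P_{H,u}(E)=G$ is determined. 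For morphisms I would invoke functoriality of the universal group: given $f_0\colon E\to F$ in $\mathcal{SPEA}_{H,u}$, with universal groups $(H\lex G,\gamma_E)$ and $(H\lex G_1,\gamma_F)$, the composite $\gamma_F\circ f_0$ is a measure on $E$ with values in $H\lex G_1$, so the universal property yields a unique po-group homomorphism $\Phi\colon H\lex G\to H\lex G_1$ with $\Phi\circ\gamma_E=\gamma_F\circ f_0$. I then define $\mathcal P_{H,u}(f_0)$ to be the restriction of $\Phi$ to the second coordinate.

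The delicate point, and the one I expect to be the main obstacle, is to show that $\Phi$ really does restrict to a homomorphism $G\to G_1$, i.e. that it carries $\{0\}\lex G$ into $\{0\}\lex G_1$. I would argue this through the first-coordinate projections $\pi\colon H\lex G\to H$ and $\pi_1\colon H\lex G_1\to H$, whose kernels are exactly $\{0\}\lex G$ and $\{0\}\lex G_1$. Since $\pi\circ\gamma_E$ and $\pi_1\circ\gamma_F$ are the $H$-valued extensions of the $(H,u)$-valued states attached to the two ordered directed decompositions (Theorems~\ref{th:4.2}--\ref{th:4.3}), the relation $\Phi\circ\gamma_E=\gamma_F\circ f_0$ should force $\pi_1\circ\Phi$ to factor through $\pi$, and then $\Phi$ maps the kernel of $\pi$ into the kernel of $\pi_1$. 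This is precisely where the full strength of $E,F$ being strong $(H,u)$-perfect is used, so that $E_0,F_0$ are genuine po-group cones and the decompositions are unique by Theorem~\ref{th:4.3}(ii); one must check that the first coordinate of $\Phi$ is governed only by the first coordinate of its argument, which is not automatic once $\Gamma(H,u)$ fails to be Archimedean and $\Infinit(E)$ is larger than $E_0$.

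Granting the restriction, functoriality of $\mathcal P_{H,u}$ is formal: the uniqueness clause in the universal property gives $\mathcal P_{H,u}(\mathrm{id}_E)=\mathrm{id}_G$ and $\mathcal P_{H,u}(f_0\circ g_0)=\mathcal P_{H,u}(f_0)\circ\mathcal P_{H,u}(g_0)$, since both sides induce the same homomorphism out of the universal group. To obtain the left-adjoint property in the sense defined above I would verify $\mathcal P_{H,u}\circ\mathcal E_{H,u}=\mathrm{id}_{\mathcal{AG}}$: for $G\in\mathcal{AG}$ the effect algebra $\mathcal E_{H,u}(G)=\Gamma(H\lex G,(u,0))$ has universal group $(H\lex G,\mathrm{id})$, whence $\mathcal P_{H,u}(\mathcal E_{H,u}(G))=G$, and the same uniqueness argument gives $\mathcal P_{H,u}(\mathcal E_{H,u}(h))=h$ on morphisms. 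Finally, the universal arrows $(G,\gamma_E)$ constructed in Proposition~\ref{pr:5.3} are precisely the unit data exhibiting $\mathcal P_{H,u}$ as a left-adjoint of $\mathcal E_{H,u}$, which closes the argument; the only genuine work is the restriction step flagged above, everything else being bookkeeping driven by uniqueness in \cite[Thm 7.2]{DvVe2} and by Theorem~\ref{th:5.1}.
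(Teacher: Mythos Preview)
Your approach is the same as the paper's: the proof there is a single sentence, ``It follows from the properties of the universal group,'' and what you have written is precisely the natural unpacking of that sentence into object-level and morphism-level verifications together with the check that $\mathcal P_{H,u}\circ\mathcal E_{H,u}=\mathrm{id}_{\mathcal{AG}}$. The restriction step you single out as the main obstacle---that the induced $\Phi\colon H\lex G\to H\lex G_1$ carries $\{0\}\times G$ into $\{0\}\times G_1$, equivalently that $f_0(E_0)\subseteq F_0$---is exactly the assertion the paper makes without argument in the paragraph preceding the Proposition (``$f_0$ can be uniquely extended to a po-group homomorphism $\mathcal P_{H,u}(f_0)$ from $G$ into $G_1$''), so your concern is well placed but not something the paper itself resolves; in the Archimedean case it is immediate from $E_0=\Infinit(E)$ via Theorem~\ref{th:4.2}(i), while in general one has to argue a bit more, and your sketch via the first-coordinate projections and the uniqueness of the ordered directed decomposition (Theorem~\ref{th:4.3}(ii)) is the right shape for that.
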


\begin{proof}
 It follows from the properties of the
universal group.
\end{proof}

Now we present  the basic result on a categorical equivalence of the
category of strong $(H,u)$-perfect effect
algebras and the category of $\mathcal{AG}$.

\begin{theorem}\label{th:5.5}
The functor ${\mathcal  E}_{H,u}$ defines a categorical
equivalence of the category $\mathcal{AG}$ and the
category $\mathcal {SPEA}_{H,u}$ of strong $(H,u)$-perfect effect algebras.

In addition, if $h:\ {\mathcal  E}_{H,u}\mathbb (G) \to {\mathcal  E}_{H,u}(G')$ is a
homomorphism of effect algebras, then there is a unique homomorphism
$f:\ G \to G'$ of po-groups such that $h = {\mathcal  E}_{H,u}(f)$, and
\begin{enumerate}
\item[{\rm (i)}] if $h$ is surjective, so is $f$;
 \item[{\rm (ii)}] if $h$ is  injective, so is $f$.
\end{enumerate}
\end{theorem}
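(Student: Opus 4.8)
The plan is to assemble the categorical equivalence from the structural pieces already established, namely Proposition \ref{pr:5.2} (faithfulness and fullness) together with the adjunction machinery in Propositions \ref{pr:5.3}--\ref{pr:5.4}. By the standard criterion for categorical equivalence, it suffices to show that $\mathcal E_{H,u}$ is full, faithful, and essentially surjective; fullness and faithfulness are exactly Proposition \ref{pr:5.2}, and essential surjectivity is precisely the content of Theorem \ref{th:5.1}: every object $E$ of $\mathcal{SPEA}_{H,u}$ is isomorphic to $\Gamma(H\lex G,(u,0))=\mathcal E_{H,u}(G)$ for some $G\in\mathcal{AG}$. So the first paragraph of the proof would simply cite these three facts and invoke the abstract characterization of equivalences. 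Alternatively, since Proposition \ref{pr:5.4} already exhibits $\mathcal P_{H,u}$ as a left-adjoint of $\mathcal E_{H,u}$, I would verify that the unit and counit of this adjunction are natural isomorphisms, which again reduces to Theorem \ref{th:5.1} (counit: $\mathcal E_{H,u}\circ\mathcal P_{H,u}\cong id$ on $\mathcal{SPEA}_{H,u}$) and the uniqueness of the universal group (unit: $\mathcal P_{H,u}\circ\mathcal E_{H,u}\cong id$ on $\mathcal{AG}$, since the universal group of $\Gamma(H\lex G,(u,0))$ is $H\lex G$, recovering $G$).

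For the ``In addition'' clause I would argue directly. Given a homomorphism $h:\mathcal E_{H,u}(G)\to\mathcal E_{H,u}(G')$, the existence and uniqueness of $f:G\to G'$ with $h=\mathcal E_{H,u}(f)$ follow from fullness and faithfulness in Proposition \ref{pr:5.2}. Concretely, $f$ is recovered on the positive cone by $f(g)=g'$ iff $h(0,g)=(0,g')$, using that $\{(0,g):g\in G^+\}$ is exactly $E_0$ for the decomposition of $\mathcal E_{H,u}(G)$ and that $h$ maps $E_0$ into $E_0'$ (which holds because $E_0$ and $E_0'$ are the respective radicals / infinitesimal parts preserved by effect-algebra homomorphisms, cf. Proposition \ref{pr:3.2} and Theorem \ref{th:4.2}(i)).

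To establish (i), suppose $h$ is surjective. Every element $(0,g')$ with $g'\in G'^+$ lies in the image, so there is $(t,g)$ with $h(t,g)=(0,g')$; since $h$ respects the $(H,u)$-decompositions it must map $E_0$ onto $E_0'$, whence $f$ is surjective on positive cones, and because $G',G$ are directed this forces $f:G\to G'$ surjective. For (ii), if $h$ is injective then its restriction to $E_0=\{(0,g):g\in G^+\}$ is injective; since $h(0,g)=(0,f(g))$, injectivity of $h$ on this set gives injectivity of $f$ on $G^+$, and a directed-group homomorphism injective on the positive cone is injective on all of $G$ (as any $g\in G$ is a difference $g_1-g_2$ of positive elements and $f(g)=f(g_1)-f(g_2)$).

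The main obstacle is not any single deep computation but rather keeping the bookkeeping honest at the junction between the abstract adjunction and the explicit coordinatewise description of $f$: one must confirm that the functor $\mathcal P_{H,u}$ built from universal groups genuinely inverts $\mathcal E_{H,u}$ up to natural isomorphism, which rests on the uniqueness clause of Theorem \ref{th:5.1} and on the identification of the universal group of a lexicographic effect algebra. Once that identification is in hand, parts (i)--(ii) are the routine observation that for directed po-groups surjectivity and injectivity are detected on the positive cone, so the only care needed is to verify that an effect-algebra homomorphism $h$ necessarily respects the ideals $E_0$ and $E_0'$, which is where Theorem \ref{th:4.2}(i) and Proposition \ref{pr:3.2} do the work.
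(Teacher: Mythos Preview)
Your approach is essentially the same as the paper's: both invoke the standard criterion (Mac Lane, Thm IV.4.1) that a full and faithful functor which is essentially surjective on objects is an equivalence, with fullness and faithfulness supplied by Proposition~\ref{pr:5.2} and essential surjectivity by Theorem~\ref{th:5.1} (the paper phrases this last step via the universal group $(H\lex G,f)$, which amounts to the same thing). Your treatment of the ``In addition'' clause and of (i)--(ii) is in fact more explicit than the paper's, which simply states these without argument; your reduction of surjectivity/injectivity of $f$ to the corresponding property on positive cones is the correct way to fill this in. One small caution: your parenthetical justification that $h(E_0)\subseteq E_0'$ because ``$E_0$ and $E_0'$ are the respective radicals / infinitesimal parts'' is not quite right in general---Theorem~\ref{th:4.2}(i) only gives $E_0\subseteq\Infinit(E)$, with equality requiring $\Gamma(H,u)$ Archimedean---but this does not matter for your argument, since once you have $h=\mathcal E_{H,u}(f)$ from Proposition~\ref{pr:5.2} the form $h(t,g)=(t,f(g))$ makes the preservation of slices automatic.
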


\begin{proof}
According to \cite[Thm IV.4.1]{MaL}, it is
necessary to show that, for a strong $(H,u)$-perfect effect algebra $E$, there is an
object $G$ in ${\mathcal  G}$ such that ${\mathcal  E}_{H,u}(G)$ is isomorphic to $E$. To show that, we take a universal group $(H
\lex G, f)$. Then ${\mathcal  E}_{H,u}(G)$ and $E$ are isomorphic.
\end{proof}

An easy corollary of Theorem \ref{th:5.5} is the following result:

\begin{corollary}\label{co:5.6}
If $(H',u')$ is an arbitrary Abelian unital po-group with \RDP, then $\mathcal{SPEA}_{H',u'}$ and $\mathcal{SPEA}_{H,u}$ are categorically equivalent categories of effect algebras.
\end{corollary}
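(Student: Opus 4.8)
The plan is to route both categories through the common category $\mathcal{AG}$ of directed Abelian po-groups with \RDP\,, which is manifestly independent of any chosen unital po-group, and then to invoke the transitivity of categorical equivalence. The whole corollary rests on the observation that Theorem~\ref{th:5.5} was proved for $(H,u)$ using nothing more than the hypothesis that $(H,u)$ is an Abelian unital po-group with \RDP\,.

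First I would note that $(H',u')$ satisfies exactly this same hypothesis, so the entire development of Propositions~\ref{pr:5.2}--\ref{pr:5.4} and Theorem~\ref{th:5.5} applies verbatim with $(H',u')$ in place of $(H,u)$. This yields two categorical equivalences
$$
\mathcal{E}_{H,u}:\mathcal{AG}\to \mathcal{SPEA}_{H,u},\qquad
\mathcal{E}_{H',u'}:\mathcal{AG}\to \mathcal{SPEA}_{H',u'},
$$
where, by Proposition~\ref{pr:5.4}, the functor $\mathcal{P}_{H,u}$ is a quasi-inverse of $\mathcal{E}_{H,u}$ and, symmetrically, $\mathcal{P}_{H',u'}$ is a quasi-inverse of $\mathcal{E}_{H',u'}$.

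Then I would form the composite functor $\mathcal{E}_{H',u'}\circ \mathcal{P}_{H,u}:\mathcal{SPEA}_{H,u}\to \mathcal{SPEA}_{H',u'}$ and verify that it is a categorical equivalence. Since a composition of categorical equivalences is again a categorical equivalence, this is immediate; explicitly, $\mathcal{E}_{H,u}\circ \mathcal{P}_{H',u'}$ serves as a quasi-inverse, because the natural isomorphisms $\mathcal{E}_{H,u}\circ\mathcal{P}_{H,u}\cong \mathrm{id}_{\mathcal{SPEA}_{H,u}}$ and $\mathcal{P}_{H,u}\circ\mathcal{E}_{H,u}\cong \mathrm{id}_{\mathcal{AG}}$, together with their primed counterparts, paste together to give natural isomorphisms of the two composites with the respective identity functors. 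Equivalently, one simply uses that categorical equivalence is an equivalence relation on categories, so from $\mathcal{SPEA}_{H,u}\simeq \mathcal{AG}\simeq \mathcal{SPEA}_{H',u'}$ one concludes $\mathcal{SPEA}_{H,u}\simeq \mathcal{SPEA}_{H',u'}$.

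The only point that genuinely demands attention is the reusability of Theorem~\ref{th:5.5} for the arbitrary $(H',u')$ rather than for the fixed $(H,u)$; once this is recorded, nothing further remains. I therefore expect no real obstacle in this corollary: it is a formal consequence of the fact that both functors $\mathcal{E}_{H,u}$ and $\mathcal{E}_{H',u'}$ land in, and are inverse to, functors from the one and the same category $\mathcal{AG}$.
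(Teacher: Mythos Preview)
Your proposal is correct and is exactly the argument the paper has in mind: the corollary is stated with no explicit proof, as an ``easy corollary of Theorem~\ref{th:5.5}'', i.e.\ both $\mathcal{SPEA}_{H,u}$ and $\mathcal{SPEA}_{H',u'}$ are equivalent to the fixed category $\mathcal{AG}$, and one concludes by transitivity. You have merely spelled out what the paper leaves implicit.
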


\section{Effect Algebras with Retractive and Lexicographic Ideals}

In the section we describe lexicographic effect algebras studying retractive and lexicographic ideals. We describe some representation theorems, finishing with the result that every lexicographic effect algebra with RDP is a subdirect product of antilattice lexicographic effect algebras with RDP.

We remind that an effect algebra $E$ is {\it simple}, if $E$ has only trivial ideals, i.e. $\{0\}$ and $E$. From (2.2) we conclude that every simple effect algebra with RDP is Archimedean and in view of \cite[Cor 6.8]{177}, $E$ is an antilattice. If $E$ has the RDP, then  due to \cite[Prop 6.7]{177}, an ideal $I$ of $E$ is maximal iff $E/I$ is a simple effect algebra.

We recall that an {\it o-ideal} of a po-group $G$ is any directed convex subgroup of $G$. An o-ideal $I$ of a po-group $G$ is said to be (i) {\it maximal} if it is a proper subset of $G$
and it is not contained in any proper o-ideal of $G$, (ii) {\it prime} if, for all o-ideals $P$ and
$Q$ of $G$ with $P \cap Q \subseteq I$, we have $P \subseteq I$ or $J \subseteq I$.

If $E =\Gamma(G,u)$ and $E$ satisfies RDP, by \cite[Thm 6.11]{177}, there is a one-to-one correspondence between ideals of $E$ and o-ideals of $(G,u)$: every ideal of $E$ can be extended to a unique o-ideal of $G$, and the restriction of any o-ideal to $\Gamma(G,u)$ gives an ideal of $E$. In addition, maximal (prime) ideals correspond to maximal (prime) o-ideals of $G$. Hence, $E$ is simple iff $G$ is {\it simple}, i.e., $G$ has only two o-ideals: $\{0\}$ and $G$.
Moreover, by \cite[Thm 6.17]{177}, $E$ is an antilattice iff $G$ is an antilattice.

\begin{example}\label{ex:6.1}
Let $G$ be the additive group $\mathbb R^2$ with the positive cone of all $(x,y)\in \mathbb R^2$  such that either $x = y = 0$ or $x > 0$ and $y > 0$. Then $u =(1,1)$ is a strong unit  for $G$. The effect algebra $E = \Gamma(G,u)$ is an antilattice simple effect algebra with the
\RDP, but $E$ is not a lattice.
\end{example}

We note that according to Kadison's Antilattice Theorem, \cite[Thm 58.4]{LuZa}, the po-group $\mathcal B(H)$ is an antilattice, in addition RDP fails in $\mathcal B(H)$.

The relation between the Archimedean property of $E=\Gamma(G,u)$ and of $G$ is not straightforward. Of course, if $G$ is Archimedean, so is $E$. But for the converse, we know only partial results. If $E$ is a lattice with RDP, then due to \cite[Thm 4.6]{DvVe4}, both notions are equivalent. They are equivalent also in the case when $E$ satisfies RDP and is {\it supremum-homogeneous}, i.e. for any $a,
b_i \in E$, $i \in I$, such that either $\bigvee_i (a +
b_i)$ exists, also
$\bigvee_i b_i$ exists, cf. \cite[Thm 4.4]{DvVe4}.

We note that a po-group (or an effect algebra) $G$ is {\it monotone $\sigma$-complete} provided that every ascending sequence $x_1\le x_2 \le\cdots $ in $G$ which is bounded above in $G$ has a suprememum in $G$. By \cite[Prop 16.9]{Goo}, if $(G,u)$ satisfies RDP, then $G$ is monotone $\sigma$-complete iff $\Gamma(G,u)$ is monotone $\sigma$-complete. In addition, $G$ is Archimedean.

The following result speaks about maximality of the ideal $E_0$ in any directed and ordered  $(H,u)$-decomposition $(E_t: t \in [0,u]_H)$ of $E$ with RDP.

\begin{proposition}\label{pr:6.2}
Let $E$ be an effect algebra with \RDP\, and let $(E_t: t \in [0,u]_H)$ be an ordered and directed $(H,u)$-decomposition of $E$, where $(H,u)$ is an Abelian unital po-group. Then $E_0$ is a maximal ideal if and only if $H$ is a simple antilattice Abelian po-group. In such a case, $\Gamma(H,u)$ is Archimedean.
\end{proposition}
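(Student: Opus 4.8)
The plan is to reduce the whole statement to a simplicity criterion for the quotient $E/E_0$ and then transfer that property back and forth between the effect algebra $\Gamma(H,u)$ and the po-group $H$. First I would record the structural facts already at hand. Since the decomposition $(E_t:t\in[0,u]_H)$ is ordered and directed, Theorem \ref{th:4.3}(i) gives $E/E_0\cong\Gamma(H,u)$, and by Theorem \ref{th:4.2}(i) the set $E_0$ is a Riesz ideal of $E$, so $E/E_0$ is a genuine effect algebra. Because $E$ has RDP, the quotient $E/E_0$ has RDP as well (cf. \cite[Prop 4.1]{185}), and hence so does $\Gamma(H,u)$. The engine of the argument is the quoted fact that, for an effect algebra with RDP, an ideal $I$ is maximal if and only if $E/I$ is simple (\cite[Prop 6.7]{177}). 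Applying this with $I=E_0$ reduces the proposition to the equivalence: $\Gamma(H,u)$ is simple if and only if $H$ is a simple antilattice Abelian po-group.

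For the forward implication I would assume $E_0$ is maximal, so that $\Gamma(H,u)\cong E/E_0$ is simple. A simple effect algebra with RDP is Archimedean and, in view of \cite[Cor 6.8]{177}, an antilattice; this already yields the final assertion that $\Gamma(H,u)$ is Archimedean. Since $\Gamma(H,u)$ has RDP, the ideal/o-ideal correspondence \cite[Thm 6.11]{177} carries simplicity down to the group (so $H$ is simple), while \cite[Thm 6.17]{177} carries the antilattice property down (so $H$ is an antilattice). Thus $H$ is a simple antilattice Abelian po-group.

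For the converse I would suppose $H$ is a simple antilattice po-group. Because $\Gamma(H,u)$ has RDP, \cite[Thm 6.11]{177} shows $\Gamma(H,u)$ is simple exactly when $H$ is; hence $E/E_0\cong\Gamma(H,u)$ is simple, and \cite[Prop 6.7]{177} gives that $E_0$ is maximal. I would also remark that the antilattice clause in the hypothesis is in fact automatic: by the forward argument, whenever $\Gamma(H,u)$ is simple it is an antilattice, so $H$ is an antilattice by \cite[Thm 6.17]{177}. The Archimedean conclusion follows once more from the fact that the simple RDP effect algebra $\Gamma(H,u)$ is Archimedean.

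The one genuinely delicate point, and where I expect the main care is needed, is the passage between the effect algebra $\Gamma(H,u)$ and the po-group $H$ through \cite[Thm 6.11]{177} and \cite[Thm 6.17]{177}. These transfer results are available only because $\Gamma(H,u)$, being isomorphic to the quotient $E/E_0$ of the RDP effect algebra $E$, inherits RDP; this is precisely what lets one treat $H$ as the representing po-group and match maximal (prime) ideals of $\Gamma(H,u)$ with maximal (prime) o-ideals of $H$. I would therefore make a point of invoking the RDP of the quotient \emph{explicitly} before citing the correspondence, since without it neither the simplicity nor the antilattice equivalence between $\Gamma(H,u)$ and $H$ can be claimed. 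Once RDP of $\Gamma(H,u)$ is in place, the remainder is a direct chain of the quoted equivalences.
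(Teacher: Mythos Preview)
Your proposal is correct and follows essentially the same route as the paper's proof: use Theorem \ref{th:4.3}(i) to identify $E/E_0$ with $\Gamma(H,u)$, invoke \cite[Prop 6.7]{177} to translate maximality of $E_0$ into simplicity of $\Gamma(H,u)$, and then pass the simplicity and antilattice properties between $\Gamma(H,u)$ and $H$ via \cite[Thm 6.11]{177} and \cite[Thm 6.17]{177}. Your explicit check that $E/E_0$ inherits RDP before applying the transfer results, and your remark that the antilattice condition on $H$ is automatic once $H$ is simple, are welcome clarifications that the paper leaves implicit.
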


\begin{proof}
Let $E_0$ be a maximal ideal of $E$. By \cite[Prop 6.7]{177}, $E/E_0$ is a simple effect algebra. In view of Theorem \ref{th:4.3}(i), $E/E_0\cong \Gamma(H,u)$ which entails $\Gamma(H,u)$ is simple, Archimedean and an antilattice.  Consequently, $H$ is a simple antilattice po-group.

Conversely, assume that $H$ is a simple antilattice po-group, so that $\Gamma(G,u)$ is a simple antilattice. Since $E/E_0\cong \Gamma(G,u)$, by \cite[Prop 6.7]{177}, $E_0$ is a maximal ideal of $E_0$.
\end{proof}

Now we present an analogous result as Theorem \ref{th:5.1} which concerns retractive and strict ideals of local effect algebras. We note that if $E$ is a local MV-algebra, then $\Rad(E)$ is strict, see \cite[Lem 1.10]{DiLe2}; this can happen when $E$ is a lattice effect algebra with RDP. For general effect algebras such a property is unknown.

\begin{theorem}\label{th:6.3}
Let $E$ be an effect algebra with \RDP. The following statements are equivalent:

\begin{enumerate}
\item[{\rm (i)}] $E$ is local and $\Rad(E)$ is retractive and strict.
\item[{\rm (ii)}] $E$ is strong $(H,u)$-perfect for some antilattice, simple Abelian unital po-group $(H,u)$ such that $\Gamma(H,u)$ is Archimedean.
\item[{\rm (iii)}] There exist an antilattice, simple Abelian unital po-group $(H,u)$ such that $\Gamma(H,u)$ is Archimedean  and a directed po-group $G$ with \RDP\, such that $E \cong \Gamma(H\lex G, (u,0))$.
\end{enumerate}
\end{theorem}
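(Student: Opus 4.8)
The plan is to prove Theorem~\ref{th:6.3} as a cycle of implications (i)$\Rightarrow$(ii)$\Rightarrow$(iii)$\Rightarrow$(i), using the structure theorem for strong $(H,u)$-perfect effect algebras (Theorem~\ref{th:5.1}) to handle the passage between algebraic conditions and the lexicographic representation, and using Proposition~\ref{pr:6.2} to control when the ideal $E_0$ is maximal.

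For the implication (i)$\Rightarrow$(ii), I would start from the assumption that $E$ is local with a unique maximal ideal, which must be $\Rad(E)$, and that $\Rad(E)$ is retractive and strict. Set $I:=\Rad(E)$. Since $I$ is a maximal ideal of an effect algebra with RDP, by \cite[Prop 6.7]{177} the quotient $E/I$ is a simple effect algebra, hence Archimedean and an antilattice by \cite[Cor 6.8]{177}; write $E/I\cong\Gamma(H,u)$ for a suitable simple antilattice Abelian unital po-group $(H,u)$ with $\Gamma(H,u)$ Archimedean. Because $I$ is retractive, there is a section $\delta_I:E/I\to E$, and because $I$ is strict and prime (it is maximal, hence prime) it is a lexicographic ideal in the sense defined above. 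The key step is to manufacture an ordered and directed $(H,u)$-decomposition together with the distinguished system $(c_t)$: I would define $E_t:=\pi_I^{-1}(\{t\})$ for $t\in[0,u]_H$ under the identification $E/I\cong\Gamma(H,u)$, check it is an $(H,u)$-decomposition via Theorem~\ref{th:4.1}, verify it is ordered using strictness (the condition $x/I<y/I\Rightarrow x<y$ is exactly what makes $E_s\leqslant E_t$ for $s<t$, cf.\ Theorem~\ref{th:4.2}), confirm directedness (each fibre is directed because $I=E_0\subseteq\Infinit(E)$ is directed and the fibres are translates), and finally set $c_t:=\delta_I(t)$. The retraction property $\pi_I\circ\delta_I=\mathrm{id}$ guarantees $c_t\in E_t$ and the homomorphism property of $\delta_I$ gives $c_v+c_t=c_{v+t}$ and $c_u=1$, so $(E_t)$ is a strong $(H,u)$-decomposition and $E$ is strong $(H,u)$-perfect.

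The implication (ii)$\Rightarrow$(iii) is immediate from Theorem~\ref{th:5.1}: a strong $(H,u)$-perfect effect algebra with RDP is isomorphic to $\Gamma(H\lex G,(u,0))$ for a unique directed Abelian po-group $G$ with RDP, and the hypotheses on $(H,u)$ carry over verbatim. For (iii)$\Rightarrow$(i), suppose $E\cong\Gamma(H\lex G,(u,0))$ with $(H,u)$ simple antilattice and $\Gamma(H,u)$ Archimedean. Put $I=\{(0,g):g\in G^+\}$; this is an ideal with $E/I\cong\Gamma(H,u)$, and it is retractive through the embedding $\Gamma(H,u)\cong\Gamma(H\lex\{0\},(u,0))\subseteq E$, exactly as in the example preceding Proposition~\ref{pr:3.5}. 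Since $(H,u)$ is simple antilattice, Proposition~\ref{pr:6.2} shows $I=E_0$ is a maximal ideal, so by Proposition~\ref{pr:3.2} (a lexicographic ideal that is maximal forces locality) $E$ is local and $\Rad(E)=I$; strictness of $I$ follows from Proposition~\ref{pr:3.5} (or directly, since $E_0\subseteq\Infinit(E)$ and the $(H,u)$-decomposition is ordered), closing the cycle.

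The main obstacle I anticipate is the construction of the distinguished system $(c_t)$ in (i)$\Rightarrow$(ii) and the verification that the fibres $E_t=\pi_I^{-1}(\{t\})$ form a \emph{directed} decomposition. Directedness of the individual fibres does not follow from generalities about arbitrary ideals; here one must exploit that $\Rad(E)=E_0$ is directed (Theorem~\ref{th:4.2}(i)) and that each $E_t$ is an $E_0$-coset, so directedness propagates from $E_0$ to $E_t$ via translation by $c_t$. A subtle point is confirming the decomposition is \emph{ordered}: this is precisely where the strictness hypothesis on $\Rad(E)$ is indispensable, since strictness says $x/\Rad(E)<y/\Rad(E)$ implies $x<y$, which is the defining inequality $E_s\leqslant E_t$ for $s<t$. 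Once orderedness and directedness are in hand, Theorem~\ref{th:4.3}(i) reconciles the quotient with $\Gamma(H,u)$ and the retraction $\delta_I$ supplies the $c_t$ satisfying conditions (i)--(iii) of a strong $(H,u)$-decomposition.
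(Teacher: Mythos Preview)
Your cycle of implications matches the paper's proof almost exactly: the same construction of $E_t=\pi_I^{-1}(\{t\})$, the same use of strictness for orderedness, the same invocation of retractivity to produce the system $(c_t)$, and the same appeal to Theorem~\ref{th:5.1} and Proposition~\ref{pr:6.2} for the remaining implications.

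The one place where your argument is not yet a proof is the directedness of the fibres $E_t$ in (i)$\Rightarrow$(ii). You write that ``directedness propagates from $E_0$ to $E_t$ via translation by $c_t$'', but this is not how it works: for $0<t<u$ the fibre $E_t$ is \emph{not} $c_t+E_0$ (in the lexicographic model $E_t=\{t\}\times G$ while $c_t+E_0=\{t\}\times G^+$), and there is no reason for $c_t$ to lie below an arbitrary $a\in E_t$, so ``translating'' $E_0$ does not produce a common lower bound for $a,b\in E_t$. The correct mechanism is the one the paper uses and which is implicit in your phrase ``$E_0$-coset'': since $a,b\in E_t$ means $a\sim_I b$, the very definition of the congruence gives $e,f\in I=E_0$ with $e\le a$, $f\le b$ and $a-e=b-f$; then $a-e\in E_t$ is the desired common lower bound. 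This uses nothing about $c_t$ and should be established \emph{before} you introduce the $c_t$'s, not after. Once you replace the translation heuristic by this congruence argument, the proof is complete and coincides with the paper's.
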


\begin{proof}
(i) $\Rightarrow$ (ii)  Let $I$ be a unique maximal ideal of $E$ and let $(K,v)$ be a (unique up to isomorphism) unital po-group with RDP guaranteed by Theorem \ref{th:2.1} such that $E \cong \Gamma(K,v)$; without loss of generality we can assume that $E=\Gamma(K,v)$.

Since  $I=\Rad(E)$ is a retractive ideal, $E/I$ is an effect algebra with RDP that is isomorphic to some $\Gamma(H,u)$, where $(H,u)$ is an Abelian unital po-group with RDP.  Consequently, $\Gamma(H,u)$ can be injectively embedded into $K$ and $H$ is isomorphic to a subgroup of $K$.

In addition, let $\langle I\rangle$ be a subalgebra of $E$ generated by $I$. By Lemma \ref{le:2.2}, $\langle I\rangle = I \cup I^-$, and $\langle I\rangle$ is a perfect effect algebra with RDP. By \cite[Prop 5.3]{177} or Theorem \ref{th:5.1}, there is a unique (up to isomorphism) directed Abelian po-group $G$ with RDP such that $\langle I\rangle \cong \Gamma(\mathbb Z\lex G,(1,0))$.

In what follows, we prove that $E \cong \Gamma(H\lex G,(u,0))$.

Let $\pi_I$ be the canonical homomorphism from $E$ onto $E/I$ and $\phi:E/I\to \Gamma(H,u)$ be an isomorphism. Then $s=\phi\circ \pi_I$ is an $(H,u)$-valued measure. If we define  $E_t=s^{-1}(\{t\})$, $t \in [0,u]_H$, by Theorem \ref{th:4.1}, $(E_t:t \in [0,u]_H)$ is an $(H,u)$-decomposition of $E$, and since $E_w+E_v=E_{v+w}$ whenever $w+v<u$, we see by Theorem \ref{th:4.2} that $(E_t:t \in [0,u]_H)$ is an ordered $(H,u)$-decomposition.
It is evident that $E_0=I$.
Now we show that every $E_t$ is downwards directed. Let $a,b \in E_t$. Then $s(a)=s(b)$ so that $a\sim_I b$ which yields that there are two elements $e,f \in I$ such that $e\le a$, $f\le b$ and $a-e=b-f$. Then $a-e \in E_t$ and $a-e\le a,b$ which gives that every $E_t$ is also directed.

Since $E_0=I$, $E_0$ is a maximal ideal of $E$ and by Proposition \ref{pr:6.2}, $(H,u)$ is an antilattice Abelian simple unital po-group with RDP.

Being $I=\Rad(E)$ retractive, there is a unique effect subalgebra $E'$ of $E$ such that $s(E')=s(E)$. For any $t\in [0,u]_H$, there is a unique element $c_t \in E'$ such that $s(c_t)=t$. Then the system $(c_t \colon t \in [0,u]_H)$ satisfies the following properties (i) $c_t \in E_t$ for each $t \in [0,u]_H$, (ii) $c_{v+t}=c_v+c_t$ whenever $v+t\le u$, (iii) $c_u=u$.

In other words, we have proved that $(E_t: t \in [0,u]_H)$ is a strong $(H,u)$-perfect decomposition of $E$,  and consequently, $E$ is strong $(H,u)$-perfect which proves (ii).

(ii) $\Rightarrow$ (iii) By Theorem \ref{th:5.1}, $E \cong \Gamma(H\lex G',(u,0))$ for some unique (up to isomorphism) directed po-group $G'$ with RDP. Moreover, let $\iota: E \to \Gamma(H\lex G',(u,0))$ be an isomorphism. Then $\iota(I)=\iota(E_0)=\{0\} \times G'^+$. Since the po-groups $\{0\}\times G'^+$ and $\{0\}\times G^+$ are isomorphic, we have $G'$ and $G$ are also isomorphic, which proves $E \cong \Gamma(H\lex G,(u,0))$ as was asserted.

(iii) $\Rightarrow$ (i). By Proposition \ref{pr:6.2}, $E_0=\{0\}\times G^+$ is a maximal ideal of $E$. If there is another maximal ideal $J$ of $E$, then there is an element $x \in J\setminus E_0$ which yields $E_0\subseteq J$, which is absurd, and $E$ is local. Then $E_0$ is retractive  for $\Gamma(H\lex G,(u,0))$ as it was shown just after the definition of a retractive ideal.
\end{proof}

We note that the class of local effect algebras with RDP and with retractive $\Rad(E)$ is strictly included in the class of local effect algebras with RDP. Indeed, the effect algebra $E =\Gamma(\mathbb Z \lex \mathbb Z,(2,1))$ is linearly ordered, and it has a unique maximal ideal $I=\{0\}\times \mathbb Z^+$ that is not retractive as it was already shown.

In the case that $E$ is an MV-algebra, Theorem \ref{th:6.3} has the following formulation:

\begin{theorem}\label{th:6.4}
Let $E$ be an MV-algebra. The following statements are equivalent:

\begin{enumerate}
\item[{\rm (i)}] $E$ is local and $\Rad(E)$ is retractive.
\item[{\rm (ii)}] $E$ is strong $(\mathbb H,1)$-perfect for some subgroup $\mathbb H$ of $\mathbb R$ with $1 \in \mathbb H$.
\item[{\rm (iii)}] There exist a subgroup $\mathbb H$ of $\mathbb R$ with $1 \in \mathbb H$ and an $\ell$-group $G$ such that $E \cong \Gamma(\mathbb H\lex G, (1,0))$.
\end{enumerate}
\end{theorem}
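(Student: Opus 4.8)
The plan is to deduce Theorem~\ref{th:6.4} from the general Theorem~\ref{th:6.3}, exploiting two special features of the MV-algebra setting: the radical is automatically strict, and the abstract po-groups $(H,u)$ and $G$ appearing in Theorem~\ref{th:6.3} are forced into a sharper form once $E$ is lattice ordered.

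First I would treat (i) $\Rightarrow$ (ii). Since $E$ is a \emph{local} MV-algebra, the radical $\Rad(E)$ is automatically strict by \cite[Lem 1.10]{DiLe2}, so that the full hypotheses of Theorem~\ref{th:6.3}(i) are satisfied. Applying Theorem~\ref{th:6.3} then yields that $E$ is strong $(H,u)$-perfect for some antilattice simple Abelian unital po-group $(H,u)$ with $\Gamma(H,u)$ Archimedean, and it only remains to recognise $(H,u)$ as a subgroup of $\mathbb R$ containing $1$. Because $E$ is an MV-algebra, the quotient $E/\Rad(E)\cong\Gamma(H,u)$ is again a lattice ordered effect algebra with \RDP, hence an MV-algebra, so $(H,u)$ may be taken to be a unital $\ell$-group. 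An $\ell$-group that is an antilattice is linearly ordered, and since $\Gamma(H,u)$ is an Archimedean lattice effect algebra with \RDP, the group $H$ is itself Archimedean by \cite[Thm 4.6]{DvVe4}. Hölder's theorem then embeds the Archimedean linearly ordered group $H$ order-isomorphically into $(\mathbb R,+)$; rescaling so that $u\mapsto 1$ identifies $(H,u)$ with a subgroup $\mathbb H\subseteq\mathbb R$ with $1\in\mathbb H$, whence $E$ is strong $(\mathbb H,1)$-perfect.

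For (ii) $\Rightarrow$ (iii) I would invoke Theorem~\ref{th:5.1} to obtain a directed po-group $G$ with \RDP and an isomorphism $E\cong\Gamma(\mathbb H\lex G,(1,0))$; the only new point is that $G$ is actually an $\ell$-group. As $E$ is an MV-algebra, $E=\Gamma(K,v)$ for a unital $\ell$-group $K$, and both $(K,v)$ and $(\mathbb H\lex G,(1,0))$ represent $E$ as intervals in unital po-groups with \RDP, so by the uniqueness in Theorem~\ref{th:2.1} the po-group $\mathbb H\lex G$ is isomorphic to $K$ and is therefore an $\ell$-group. By the criterion recalled in Section~3, $\mathbb H\lex G$ is an $\ell$-group (equivalently, satisfies $\RDP_2$) iff $\mathbb H$ is linearly ordered and $G$ is an $\ell$-group; since $\mathbb H\subseteq\mathbb R$ is linear, this forces $G$ to be an $\ell$-group, which is precisely (iii).

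Finally, (iii) $\Rightarrow$ (i) follows by feeding the data back into Theorem~\ref{th:6.3}: a subgroup $\mathbb H\subseteq\mathbb R$ with $1\in\mathbb H$ is linearly ordered, hence an antilattice; it is Archimedean, so $\Gamma(\mathbb H,1)$ is Archimedean; and its only convex subgroups are $\{0\}$ and $\mathbb H$, so it is simple. Moreover an $\ell$-group $G$ is directed and has \RDP. Thus the hypotheses of Theorem~\ref{th:6.3}(iii) hold, and Theorem~\ref{th:6.3} gives that $E$ is local with $\Rad(E)$ retractive (and strict), yielding (i). I expect the main obstacle to lie in the two structural refinements, namely that the abstract $(H,u)$ from Theorem~\ref{th:6.3} is compelled to be a subgroup of $\mathbb R$, where Hölder's theorem is the decisive input, and that $G$ is compelled to be an $\ell$-group, where the $\RDP_2$/lattice criterion for lexicographic products together with the uniqueness of the representing po-group does the work.
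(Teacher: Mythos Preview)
Your proposal is correct and follows essentially the same route as the paper's proof: both deduce the result from Theorem~\ref{th:6.3} together with the MV-specific facts that $\Rad(E)$ is automatically strict for local MV-algebras \cite[Lem 1.10]{DiLe2}, that the quotient forces $(H,u)$ to be a linearly ordered Archimedean $\ell$-group so that H\"older's theorem applies, and that the lexicographic $\RDP_2$ criterion forces $G$ to be an $\ell$-group. Your version is more explicit in organising the three implications and in justifying why $\mathbb H\lex G$ inherits the $\ell$-group structure via the uniqueness in Theorem~\ref{th:2.1}, whereas the paper compresses these steps into a few lines and appeals to Proposition~\ref{pr:6.2} directly; but the substance is the same.
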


\begin{proof}
It follows from \cite[Thm 5.7]{275}, or using Theorem \ref{th:6.3}: An MV-algebra $E\cong \Gamma(H\lex G,(u,0))$ is lexicographic if and only if $H$ is linear and $G$ is an $\ell$-group. If $(E_t: t \in [0,u]_H)$ is an $(H,u)$-decomposition, then it is strong $(H,u)$-decomposition. In such a case $\Gamma(H,u)\cong E/E_0$ and in view of  Proposition \ref{pr:6.2}, $E_0$ is a maximal ideal iff $(H,u)$ is a linear, Archimedean and Abelian unital $\ell$-group. By H\"older's theorem, \cite[Thm XIII.12]{Bir} or \cite[Thm IV.1.1]{Fuc}, it is isomorphic to some $(\mathbb H,1)$, where $\mathbb H$ is a subgroup of $\mathbb R$ and $1 \in \mathbb H$.
\end{proof}

The following theorem shows a similar relation as Theorem \ref{th:6.4}, however, the next result does not deal with local effect algebras as it does in Theorem \ref{th:6.4}.

\begin{theorem}\label{th:6.5}
Let $E$ be an effect algebra with \RDP\, and let $I$ be a lexicographic ideal of $E$. Then there is an Abelian antilattice unital po-group $(H,u)$ with \RDP\, such that $E/I \cong \Gamma(H,u)$ and there is an Abelian nontrivial directed po-group $G$ with \RDP\, and with $\langle I\rangle \cong \Gamma(\mathbb Z \lex G,(1,0))$ such that $E \cong \Gamma(H\lex G,(u,0))$.

Conversely, if  $E \cong \Gamma(H\lex G,(u,0))$, where $(H,u)$ is an Abelian antilattice unital po-group with \RDP\, and $G$ is an Abelian nontrivial directed po-group $G$ with \RDP, then $E$ has a lexicographic ideal $I$ such that $E/I \cong \Gamma(H,u)$ and $\langle I\rangle \cong \Gamma(\mathbb Z\lex G,(1,0))$.
\end{theorem}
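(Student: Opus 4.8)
The plan is to prove the two implications separately, extracting the unital po-group $(H,u)$ from the quotient $E/I$ and the directed po-group $G$ from the ideal $I$ itself, and then tying both representations together through Theorem \ref{th:5.1}.

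For the forward direction I would first exploit primeness of $I$: since $E$ has \RDP\ and $I$ is prime, the quotient $E/I$ is an antilattice effect algebra with \RDP\ (by the characterization of prime ideals recalled in Section 2), so Theorem \ref{th:2.1} yields a unique Abelian unital po-group $(H,u)$ with \RDP, necessarily an antilattice, with $E/I \cong \Gamma(H,u)$; fix such an isomorphism $\phi$. Composing with the canonical projection gives an $(H,u)$-valued state $s = \phi\circ\pi_I$, and by Theorem \ref{th:4.1} the fibres $E_t := s^{-1}(\{t\})$ form an $(H,u)$-decomposition with $E_0 = I$. The two remaining axioms of a lexicographic ideal play the decisive roles: strictness forces the decomposition to be ordered (if $s<t$ then $x/I < y/I$ for $x\in E_s$, $y\in E_t$, hence $x<y$), while retractiveness supplies the distinguished family $c_t := \delta_I(\phi^{-1}(t))$ witnessing that the decomposition is \emph{strong} (conditions (i)--(iii) of a strong decomposition follow because $\delta_I$ is a homomorphism with $\pi_I\circ\delta_I = \mathrm{id}$). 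Directedness of each $E_t$ I would obtain exactly as in the proof of Theorem \ref{th:6.3}, using the congruence $\sim_I$ to produce a common lower bound and passing to complements for an upper bound. Thus $E$ is strong $(H,u)$-perfect, and Theorem \ref{th:5.1} delivers the unique directed Abelian po-group $G$ with \RDP\ such that $E \cong \Gamma(H\lex G,(u,0))$; nontriviality of $G$ is immediate since its positive cone is $E_0 = I \ne \{0\}$.

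The representation of $\langle I\rangle$ I would obtain by re-reading the very same decomposition through $(\mathbb Z,1)$. By Lemma \ref{le:2.2}, $\langle I\rangle = I\cup I^- = E_0\cup E_u$, and the pair $(E_0,E_u)$ is an ordered directed $(\mathbb Z,1)$-decomposition of $\langle I\rangle$ which is strong with $c_0 = 0$ and $c_1 = 1$. Applying Theorem \ref{th:5.1} a second time, now with $(H,u)=(\mathbb Z,1)$, gives $\langle I\rangle \cong \Gamma(\mathbb Z\lex G',(1,0))$. The essential point, and the step I expect to be the main obstacle to phrase cleanly, is that $G'$ coincides with $G$: in the proof of Theorem \ref{th:5.1} the directed po-group is reconstructed precisely as the one whose positive cone is the zero-part $E_0$ of the decomposition, and this zero-part is the same set $E_0 = I$ whether $E$ is viewed as strong $(H,u)$-perfect or $\langle I\rangle$ as strong $(\mathbb Z,1)$-perfect. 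Hence the identical $G$ appears in both representations, yielding $\langle I\rangle \cong \Gamma(\mathbb Z\lex G,(1,0))$.

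For the converse I would invoke Proposition \ref{pr:3.5}: with $E = \Gamma(H\lex G,(u,0))$ and $I = \{(0,g): g\in G^+\}$, that proposition already guarantees $I$ is a lexicographic ideal whenever $(H,u)$ is an antilattice and both po-groups are nontrivial. Then $E/I \cong \Gamma(H,u)$ follows from $I = E_0$ together with Theorem \ref{th:4.3}(i), and $\langle I\rangle \cong \Gamma(\mathbb Z\lex G,(1,0))$ follows either by applying the forward direction to this $I$ (invoking uniqueness of $G$ in Theorem \ref{th:5.1}) or by the direct verification that $I\cup I^- = \{(0,g): g\ge 0\}\cup\{(u,g): g\le 0\}$ is carried isomorphically onto $\Gamma(\mathbb Z\lex G,(1,0)) = \{(0,g): g\ge 0\}\cup\{(1,g): g\le 0\}$ via $(0,g)\mapsto(0,g)$ and $(u,g)\mapsto(1,g)$.
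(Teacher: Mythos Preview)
Your proposal is correct and follows essentially the same route as the paper's proof: both extract $(H,u)$ from the antilattice quotient $E/I$ via primeness and Theorem~\ref{th:2.1}, build the $(H,u)$-decomposition from the fibres of $\phi\circ\pi_I$, use strictness for orderedness, use $\sim_I$ for directedness, use retractiveness to supply the family $(c_t)$ making the decomposition strong, and then invoke Theorem~\ref{th:5.1}; the identification of $G$ with the po-group arising from $\langle I\rangle\cong\Gamma(\mathbb Z\lex G',(1,0))$ via the common positive cone $E_0=I$, and the converse via Proposition~\ref{pr:3.5}, also match the paper exactly.
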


\begin{proof}
In the same way as in the proof of Theorem \ref{th:6.3}, we can assume that $E=\Gamma(K,v)$ for some unital po-group $(K,v)$. Since $I$ is lexicographic, then $I$ is prime, so that $E/I$ is an antilattice effect algebra with RDP.  There is an Abelian antilattice unital po-group $(H,u)$ with RDP such that $E/I\cong \Gamma(H,u)$.

Let $\pi_I:E\to E/I$ be the canonical projection. For any $t \in [0,u]_H$, we set $E_t:=\pi_I^{-1}(\{t)\}$. We assert that $(E_t: t \in [0,u]_H)$ is an $(H,u)$-decomposition of $E$. Indeed, since $\pi_I$ is surjective, every $E_t$ is nonempty, and $\pi_I$ is in fact an $(H,u)$-valued state. By Theorem \ref{th:4.1}, $(E_t: t \in [0,u]_H)$ is an $(H,u)$-decomposition of $E$. In addition, let $x \in E_s$ and $y \in E_t$ for $s<t,$ $s,t \in [0,u]_H$. Then $\pi_I(x)=s <t<\pi(y)$ and $x<y$ because $I$ is strict. Therefore, the decomposition is ordered.

It is clear that $I=E_0$. Now let $x,y \in E_t$, then $\pi_I(x)=\pi_I(y)$ which means $x\sim_Iy$. There are $e,f \in E_0$ such that $e\le x$, $f \le y$, and $x-e=y-f$. Then $y-f \le x,y$ which proves that the decomposition is also downwards directed, and finally, directed.

Moreover, $\langle I\rangle = I \cup I^-$, and $\langle I\rangle$ is a perfect  effect algebra with RDP. By \cite[Prop 5.3]{177} or Theorem \ref{th:5.1}, there is a unique (up to isomorphism) directed Abelian po-group $G$ with RDP such that $\langle I\rangle \cong \Gamma(\mathbb Z\lex G,(1,0))$.

Now we show that $(E_t: t \in [0,u]_H)$ is a strong $(H,u)$-perfect decomposition of $E$. Being $I$ also retractive, there is a subalgebra $E'$ of $E$ such that $E'\cong E/I$ and $\pi_I(E')=\pi_I(E)$. In the same way as in the proof of implication (i) $\Rightarrow$ (ii) in Theorem \ref{th:6.3}, we find a family of elements $(c_t: t\in [0,u]_H)$ which proves that the decomposition is a strong $(H,u)$-perfect one. By Theorem \ref{th:5.1}, there is a directed Abelian po-group $G'$ with RDP such that $E \cong \Gamma(H\lex G',(u,0))$.

In the same way as in the proof of Theorem \ref{th:6.3}, we can show that $G\cong G'$ and finally, $E \cong \Gamma(H\lex G,(u,0))$.

The converse statement follows from Proposition \ref{pr:3.5} if we set $I=E_0=\{0\}\times G^+$.
\end{proof}

In the definition of a lexicographic ideal it was assumed that  it is a nonzero ideal. If we define a {\it weak lexicographic ideal} as an ideal $I$ of $E$ such $I\ne E$ and it satisfies all conditions (i)---(iii) of the lexicographic ideal, we see that if $I$ is the zero weak lexicographic ideal, then in Theorem \ref{th:6.5}, we can repeat its proof and we obtain $E\cong E/I\cong\Gamma(H,u)$ and $H$ and $E$ are an antilattice po-group and an antilattice effect algebra, respectively, with RDP, and $\langle I\rangle =\{0,1\}$ so that $G=O$, $\langle I\rangle \cong \Gamma(\mathbb Z \lex O,(1,0))$, and $E\cong \Gamma(H\lex O,(u,0))$ holds. Therefore, we have the following result where it is not necessary to assume that $E_0$ and $G$ are nontrivial.

In  Example \ref{ex:3.1}, we have three weak lexicographic ideals, the third one is the zero ideal. Of course, the set of weak lexicographic ideals is linearly ordered with respect to the set theoretical inclusion, see Proposition \ref{pr:3.2}.

\begin{theorem}\label{th:6.6}
Let $E$ be an effect algebra with \RDP\, and let $I$ be a weak lexicographic ideal of $E$. Then there is an Abelian antilattice unital po-group $(H,u)$ with \RDP\, such that $E/I \cong \Gamma(H,u)$ and there is an Abelian directed po-group $G$ with \RDP\, and with $\langle I\rangle \cong \Gamma(\mathbb Z \lex G,(1,0))$ such that $E \cong \Gamma(H\lex G,(u,0))$.

Conversely, if  $E \cong \Gamma(H\lex G,(u,0))$, where $(H,u)$ is an Abelian antilattice unital po-group with \RDP\, and $G$ is an Abelian directed po-group $G$ with \RDP, then $E$ has a weak lexicographic ideal $I$ such that $E/I \cong \Gamma(H,u)$ and $\langle I\rangle \cong \Gamma(\mathbb Z\lex G,(1,0))$.
\end{theorem}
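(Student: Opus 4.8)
The plan is to reduce everything to Theorem \ref{th:6.5} by splitting on whether the weak lexicographic ideal $I$ is trivial. If $\{0\}\ne I\ne E$, then $I$ is an ordinary lexicographic ideal in the sense of the definition preceding Example \ref{ex:3.1}, so both directions follow verbatim from Theorem \ref{th:6.5}, with $G$ automatically nontrivial. Hence the only genuinely new content concerns the degenerate case $I=\{0\}$ in the forward direction and, dually, the case $G=O$ in the converse, exactly as anticipated in the paragraph just before the statement.

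For the forward direction with $I=\{0\}$, I would first observe that $E/I\cong E$, so the hypothesis that $I$ is prime means, by the characterization of prime ideals through antilattice quotients (\cite[Prop 6.5]{177}), that $E$ itself is an antilattice effect algebra with \RDP. Applying Theorem \ref{th:2.1}, I obtain an Abelian unital po-group $(H,u)$ with \RDP\ such that $E\cong\Gamma(H,u)$, and since $E$ is an antilattice, \cite[Thm 6.17]{177} forces $H$ to be an antilattice po-group as well. It then remains only to record the two isomorphisms explicitly: by Lemma \ref{le:2.2}, $\langle I\rangle=\{0\}\cup\{0\}^-=\{0,1\}\cong\Gamma(\mathbb Z\lex O,(1,0))$, so $G=O$; and $E\cong\Gamma(H,u)\cong\Gamma(H\lex O,(u,0))$, which is the required form with the trivial directed po-group $G=O$.

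For the converse I would split on whether $G$ is nontrivial. When $G\ne O$, Proposition \ref{pr:3.5} already shows that $I=\{(0,g):g\in G^+\}$ is a genuine lexicographic ideal, and Theorem \ref{th:6.5} supplies $E/I\cong\Gamma(H,u)$ and $\langle I\rangle\cong\Gamma(\mathbb Z\lex G,(1,0))$. When $G=O$, I set $I=\{0\}$ and verify directly that it is a weak lexicographic ideal: it is trivially strict, the canonical projection onto $E/\{0\}\cong E$ is an isomorphism and hence retractive, and primeness of $\{0\}$ is equivalent—again by \cite[Prop 6.5]{177}—to $E\cong\Gamma(H,u)$ being an antilattice, which holds because $H$ is an antilattice (\cite[Thm 6.17]{177}). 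The remaining isomorphisms $E/I\cong E\cong\Gamma(H,u)$ and $\langle I\rangle=\{0,1\}\cong\Gamma(\mathbb Z\lex O,(1,0))$ are then immediate.

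The only delicate point is the verification that the zero ideal qualifies as a weak lexicographic ideal precisely when the ambient effect algebra is an antilattice; this is where the primeness condition does all the work, since strictness and retractivity are automatic for $\{0\}$. I expect no other obstacle, as every nontrivial case is absorbed directly into Theorem \ref{th:6.5}.
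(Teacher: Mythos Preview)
Your proposal is correct and matches the paper's approach in substance. The paper's own proof is even terser: for the forward direction it simply says to repeat all steps of the proof of Theorem~\ref{th:6.5} (the machinery there does not actually use $I\ne\{0\}$, so it runs uniformly), and for the converse it takes $I=\{0\}\times G^+$ as the weak lexicographic ideal in one stroke, without splitting on whether $G$ is trivial. Your explicit case split on $I=\{0\}$ versus $I\ne\{0\}$ (and dually $G=O$ versus $G\ne O$) is exactly the content of the paragraph preceding the theorem, so the only difference is organizational: you invoke the \emph{statement} of Theorem~\ref{th:6.5} as a black box and handle the degenerate case separately, whereas the paper reruns the \emph{proof} of Theorem~\ref{th:6.5} uniformly.
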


\begin{proof}
For the first part, we repeat all steps of the proof of Theorem \ref{th:6.5}, and for the converse, if $E \cong \Gamma(H\lex G,(u,0))$, then $I=\{0\}\times G^+$ is a weak lexicographic ideal.
\end{proof}

The condition ``$I$ is a prime ideal" in the definition of a weak lexicographic ideal $I$ of $E$ means that $\Gamma(H,u)\cong E/I$ is an antilattice Abelian po-group. If we cancel this property, Theorem \ref{th:6.5} can be reformulated as follows:

\begin{theorem}\label{th:6.7}
Let $E$ be an effect algebra with \RDP\, and let $I\ne E$ be an ideal of $E$ that is strict and retractive. Then there is an Abelian unital po-group $(H,u)$ with \RDP\, such that $E/I \cong \Gamma(H,u)$ and there is an Abelian directed po-group $G$ with \RDP\, and with $\langle I\rangle \cong \Gamma(\mathbb Z \lex G,(1,0))$ such that $E \cong \Gamma(H\lex G,(u,0))$.

Conversely, if $E \cong \Gamma(H\lex G,(u,0))$, where $(H,u)$ is an Abelian unital po-group with \RDP\, and $G$ is an Abelian directed po-group $G$ with \RDP, then $E$ has a strict and retractive ideal $I$ such that $E/I \cong \Gamma(H,u)$ and $\langle I\rangle \cong \Gamma(\mathbb Z\lex G,(1,0))$.

In addition, an effect algebra $E$ with \RDP\, is strong $(H,u)$-perfect if and only if $E$ has a strict and retractive ideal $I$ such that $E/I \cong \Gamma(H,u)$.
\end{theorem}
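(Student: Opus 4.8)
The plan is to recognise that Theorem \ref{th:6.7} is exactly Theorem \ref{th:6.5} with the primeness requirement on $I$ removed, and that in the proof of Theorem \ref{th:6.5} primeness of $I$ was invoked only to conclude that $E/I$ (and hence $(H,u)$) is an antilattice. Consequently I would run the very same argument, replacing each use of the antilattice property by the weaker fact that, by Theorem \ref{th:2.1}, $E/I \cong \Gamma(H,u)$ for some Abelian unital po-group $(H,u)$ with \RDP. The strictness and retractivity of $I$ --- not its primeness --- are what actually drive the construction, so nothing essential is lost.

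For the forward direction I would first invoke Theorem \ref{th:2.1} to write $E = \Gamma(K,v)$ for a unital po-group $(K,v)$ with \RDP. Since $I$ is retractive, $E/I$ is an effect algebra with \RDP, and fixing an isomorphism $\phi\colon E/I \to \Gamma(H,u)$ the composite $s = \phi \circ \pi_I$ is an $(H,u)$-valued state; setting $E_t = s^{-1}(\{t\})$ yields an $(H,u)$-decomposition by Theorem \ref{th:4.1}. Strictness of $I$ gives $x/I < y/I \Rightarrow x < y$, hence $E_s \leqslant E_t$ whenever $s<t$, i.e.\ the decomposition is ordered, while the congruence $\sim_I$ together with $E_0 = I$ shows every fiber $E_t$ is directed. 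Next, $\langle I \rangle = I \cup I^-$ by Lemma \ref{le:2.2} is a perfect effect algebra with \RDP, so Theorem \ref{th:5.1} (or \cite[Prop 5.3]{177}) produces a unique directed Abelian po-group $G$ with \RDP\ and $\langle I \rangle \cong \Gamma(\mathbb Z \lex G,(1,0))$. Retractivity now supplies a subalgebra $E'$ with $\pi_I(E') = \pi_I(E)$, yielding a family $(c_t : t \in [0,u]_H)$ obeying the strong-decomposition axioms; hence the decomposition is strong $(H,u)$-perfect, and a second application of Theorem \ref{th:5.1} gives $E \cong \Gamma(H \lex G',(u,0))$ for some directed Abelian $G'$ with \RDP. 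Identifying the images of $E_0$ exactly as in the proofs of Theorems \ref{th:6.3} and \ref{th:6.5} forces $G \cong G'$, completing the forward implication.

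For the converse I would take $I = E_0 = \{0\} \times G^+$ inside $E = \Gamma(H \lex G,(u,0))$: this ideal is retractive by the observation recorded just after the definition of a retractive ideal, it is strict because the lexicographic order makes $x/I < y/I$ imply $x < y$, one has $E/I \cong \Gamma(H,u)$, and $\langle I \rangle = I \cup I^- \cong \Gamma(\mathbb Z \lex G,(1,0))$; note that here $G = O$ is permitted, so no nontriviality hypothesis is needed. Finally, the concluding equivalence follows by combining the two directions with Theorem \ref{th:5.1}: if $E$ is strong $(H,u)$-perfect then Theorem \ref{th:5.1} gives $E \cong \Gamma(H \lex G,(u,0))$ and the converse just proved furnishes a strict retractive ideal $I$ with $E/I \cong \Gamma(H,u)$; conversely such an ideal yields $E \cong \Gamma(H \lex G,(u,0))$ by the forward direction, whence $E$ is strong $(H,u)$-perfect by Theorem \ref{th:5.1}.

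There is no serious obstacle here; the work is bookkeeping to check that the earlier argument never genuinely used primeness. The one point deserving care is confirming that strictness alone (rather than primeness) yields orderedness of $(E_t)$ and that retractivity alone produces the section $(c_t)$ --- once these are isolated, the antilattice hypothesis is seen to be inessential and the proof of Theorem \ref{th:6.5} transfers almost verbatim. The only genuinely new remark is that the present weaker hypotheses admit the degenerate case $G = O$ (equivalently $I = \{0\}$), so the statement must be read without any nontriviality demand on $G$.
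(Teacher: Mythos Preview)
Your proposal is correct and matches the paper's own argument essentially verbatim: the paper's proof of Theorem \ref{th:6.7} simply states that the first two statements have the same proof as Theorem \ref{th:6.6} (which in turn repeats the steps of Theorem \ref{th:6.5}), and that the third statement follows from the first two together with Theorem \ref{th:5.1}. You have correctly identified that primeness in the proof of Theorem \ref{th:6.5} is used only to obtain the antilattice property of $E/I$, and that strictness and retractivity alone drive the construction of the strong $(H,u)$-decomposition; your handling of the converse via $I=\{0\}\times G^+$ and of the degenerate case $G=O$ also agrees with the paper's treatment.
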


\begin{proof}
The first two statements have the same proof as that of Theorem \ref{th:6.6}. The third statement follows from the first two ones and Theorem \ref{th:5.1}.
\end{proof}

Let $I$ be a lexicographic ideal of an effect algebra $E$. We say that $E$ is $I$-{\it representable} if $E\cong \Gamma(H\lex G, (u,0))$, where $(H,u)$ is an Abelian unital po-group with RDP such that $E/I \cong \Gamma(H,u)$ and $G$ is a directed po-group with RDP such that $\langle I\rangle \cong \Gamma(\mathbb Z \lex G,(1,0))$;  the existence of $(H,u)$ and $G$ is guaranteed by Theorem \ref{th:6.5}. In view of Theorem \ref{th:6.6} and Theorem \ref{th:6.7}, this notion can be extended also for any weak lexicographic ideal $I$ or for any ideal $I$ that strict and retractive.

We say that a homomorphism $f:E \to F$ (i) has the $\sim$-{\it property} if $f(x)=f(y)$ iff there are $e,g\in \Ker(f):=\{x\in E: f(x)=0\}$ with $e\le x$ and $g\le y$ such that $x-e=y-g$, and (ii) is {\it full} if $f(a)+f(b)$ is defined in $F$, there are $a_1,b_1\in E$ such that $f(a_1)=f(a)$, $f(b_1)=f(b)$ and $a_1+b_1$ is defined in $E$.

For example, if $I$ is a Riesz ideal of $E$  and $f=\pi_I$, then $f$ has the $\sim$-property, \cite[Prop 4.1]{185}, and $\Ker(f)\subseteq I$. We note that if $f$ is surjective and full, the relation $\sim_f$ on $E$ defined by $a\sim_f b$ iff $f(a)=f(b)$ is a congruence on $E$ such that $E/\sim_f$ is an effect algebra, see \cite[Prop 3.3]{DvVe3}.

\begin{lemma}\label{le:6.8}
If a surjective homomorphism $f:E\to F$ has the $\sim$-property, then $f$ is full. Moreover, if $f(x)+f(y)$ exists in $F$, there are $x_1,y_1\in E$ with $f(x_1)=f(x)$, $f(y_1)=f(y)$ and $x_1+y_1$ exists in $E$.

In addition, if $E$ has the \RDP, then $E/\sim_f=E/\Ker(f)$ has also the \RDP.
\end{lemma}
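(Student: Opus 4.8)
The plan is to treat the first two sentences as a single assertion: being \emph{full} is exactly the statement that whenever $f(x)+f(y)$ is defined in $F$ one can find $x_1,y_1\in E$ with $f(x_1)=f(x)$, $f(y_1)=f(y)$ and $x_1+y_1$ defined, so I would prove that one claim and then deduce the $\RDP$ part separately. First I would recast the $\sim$-property in a ``common core'' form: if $f(a)=f(b)$, choose $e,g\in\Ker(f)$ with $e\le a$, $g\le b$ and $a-e=b-g=:m$; then $m\le a$, $m\le b$ and, since $f(e)=f(g)=0$, also $f(m)=f(a)=f(b)$. Thus elements with equal image always admit a common lower bound carrying the same image, and each $\sim_f$-class is obtained from such a core by adding kernel elements.

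For fullness, suppose $f(x)+f(y)$ is defined, so $f(x)\le f(y)^-=f(y^-)$. Using surjectivity, pick $w\in E$ with $f(w)=f(x)+f(y)$; then $f(x)\le f(w)$. The key reduction is that it suffices to produce $x_1\le w$ with $f(x_1)=f(x)$, for then $y_1:=w-x_1$ satisfies $f(y_1)=f(w)-f(x_1)=f(y)$ and $x_1+y_1=w$ is defined. Hence everything reduces to the \emph{inequality-lifting} claim: $f(a)\le f(b)$ in $F$ yields some $a_1\le b$ in $E$ with $f(a_1)=f(a)$. When $f(a)=f(b)$ this is immediate from the common-core form (take $a_1=m$). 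For the general case I would use surjectivity to choose $c\in E$ with $f(c)=f(b)-f(a)$, so that $f(a)+f(c)=f(b)$, and then try to realize this decomposition of $f(b)$ as an honest sum in $E$ lying below $b$, adjusting the chosen preimages inside their $\sim_f$-classes so as to make the relevant sum defined. \textbf{The main obstacle} is precisely this last point: lifting a \emph{strict} inequality, equivalently arranging that an auxiliary sum such as $a+c$ becomes defined in $E$. This is where surjectivity and the $\sim$-property must be used together, since neither the order nor the definedness of sums transfers from $F$ to $E$ automatically. As a consistency check, when $E$ has $\RDP$ fullness also drops out of the factorization described below, so the genuinely delicate case is the $\RDP$-free one.

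For the final statement I would observe that the $\sim$-property says precisely that the relation $a\sim_f b\iff f(a)=f(b)$ coincides with the ideal congruence $\sim_{\Ker(f)}$ attached to the ideal $\Ker(f)$, namely $a\sim_{\Ker(f)}b$ iff $a-e=b-g$ for some $e,g\in\Ker(f)$ with $e\le a$ and $g\le b$. Now assume $E$ has $\RDP$. Then every ideal of $E$ is a Riesz ideal, so $\Ker(f)$ is a Riesz ideal and $E/\Ker(f)$ is a well-defined effect algebra which, since $E$ has $\RDP$, again has $\RDP$ by \cite[Prop 4.1]{185}. Because $\sim_f=\sim_{\Ker(f)}$ as congruences, the two quotient effect algebras coincide, $E/\sim_f=E/\Ker(f)$, and therefore $E/\sim_f$ has $\RDP$. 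Fullness of $f$, established above, is exactly what guarantees via \cite[Prop 3.3]{DvVe3} that $E/\sim_f$ is an effect algebra to begin with, so the two halves of the lemma fit together.
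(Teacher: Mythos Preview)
Your plan is essentially the paper's, and your treatment of the \RDP\ part matches it. The only real differences are in how the fullness argument is routed and in how your flagged obstacle is discharged.

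The paper does not pass through an auxiliary $w$ with $f(w)=f(x)+f(y)$. From $f(x)+f(y)$ defined it simply writes $f(x)\le f(y^-)$ and then, using the identification $\sim_f=\sim_{\Ker(f)}$, invokes \cite[Prop~4.1]{185} to produce $x_1\in E$ with $f(x_1)=f(x)$ and $x_1\le y^-$; taking $y_1=y$ finishes, so no second adjustment is needed. Your reduction ``find $x_1\le w$ and set $y_1=w-x_1$'' is correct but a detour: lifting the inequality against $y^-$ already yields a partner for $y$ itself.

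More to the point, the strict-inequality lifting you isolate as the hard step is not argued from scratch in the paper either---it is exactly what the cited \cite[Prop~4.1]{185} supplies for the relation $\sim_I$ attached to an ideal $I$ (namely, that $[a]\le[b]$ forces some $a'\sim_I a$ with $a'\le b$). Once you have observed $\sim_f=\sim_{\Ker(f)}$, that external result closes your gap immediately. Your attempt to realize $f(a)+f(c)=f(b)$ as an honest sum in $E$ by adjusting preimages is heading toward reproving that proposition and, as you suspected, does not go through by bare hands; the missing ingredient is a citation, not a new trick.
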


\begin{proof}
If $f$ has the $\sim$-property, then $a\sim_f b$ iff $a\sim_{Ker(f)}b$. Let $f(x)+f(y)$ be defined in $F$. Then $f(x)\le f(y^-)$. Since the classes $a/\sim_f = a/\sim_{Ker(f)}$, by \cite[Prop 4.1]{185}, there is $x_1 \in E$ such that $f(x_1)=f(x)$ and $x_1\le y^-$. Hence, $f$ is full.

Now let $E$ have the RDP. Then $E/\Ker(f)$ has the RDP. But $E/\Ker(f)=E/\sim_f$, so that $E$ has RDP, too.
\end{proof}

In the next theorem we show that the class of lexicographic effect algebras with \RDP\, is closed under homomorphic images.

\begin{theorem}\label{th:6.9}
Let $I$ be a  (weak) lexicographic ideal of an effect algebra with \RDP, and let $f$ be a surjective homomorphism with the $\sim$-property from $E$ onto an effect algebra $F$ with \RDP\, such that $\Ker(f)\subset I$ $(\Ker(f)\subseteq I)$. Then $f(I)$ is a (weak) lexicographic ideal of $F$.

In addition, $F\cong \Gamma(H_1\lex G_1,(u_1,0))$, where $(H_1,u_1)$ and $G_1$ is an Abelian unital po-group and an Abelian directed po-group, respectively, that are homomorphic images of $(H,u)$ and $G$, respectively.
\end{theorem}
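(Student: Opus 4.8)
The plan is to funnel the whole statement through the single isomorphism $F/f(I)\cong E/I$ and then read off the conclusion from Theorem~\ref{th:6.5} (or Theorem~\ref{th:6.6} in the weak case). First I would isolate two bookkeeping facts coming from $\Ker(f)\subseteq I$ together with the $\sim$-property: (a) if $x\sim_{\Ker(f)}y$ and $y\in I$, then $x\in I$ (from $x-k_1=y-k_2\le y\in I$ with $k_1,k_2\in\Ker(f)\subseteq I$ one gets $x-k_1\in I$, hence $x=(x-k_1)+k_1\in I$); and (b) $f^{-1}(f(I))=I$ (immediate from (a), since $f(x)\in f(I)$ gives $x\sim_{\Ker(f)}i$ for some $i\in I$). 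Using (a), (b) and the fullness of $f$ from Lemma~\ref{le:6.8} (which also lets one lift a defined sum in $F$ to a defined sum in $E$), I would check that $f(I)$ is an ideal of $F$: for downward closure, lift $a\le f(i)$ to $a_1+c_1$ defined in $E$ with $a_1\in I$; additive closure is analogous. As $F$ has \RDP, $f(I)$ is automatically a Riesz ideal, so $F/f(I)$ is an effect algebra with \RDP.

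The technical core is to show that the surjection $\psi:=\pi_{f(I)}\circ f\colon E\to F/f(I)$ satisfies $\Ker(\psi)=I$ (which is exactly (b)) and $\psi(x)=\psi(y)\iff x\sim_I y$; combined with Lemma~\ref{le:6.8} this yields $E/I=E/{\sim_\psi}\cong F/f(I)$. The direction $x\sim_I y\Rightarrow\psi(x)=\psi(y)$ is routine (apply $f$ to $x-e=y-g$). The reverse implication is the main obstacle. Starting from $f(x)\sim_{f(I)}f(y)$, I would write $f(x)=\bar z+\bar e$ and $f(y)=\bar z+\bar g$ with $\bar e,\bar g\in f(I)$, lift $\bar z,\bar e,\bar g$ through surjectivity, and use the lifting clause of Lemma~\ref{le:6.8} to realise these two equalities by genuinely defined sums $z_1+e_1$ and $z_2+g_1$ in $E$ with $e_1,g_1\in I$ (membership via (a)). Since $\sim_{\Ker(f)}$ refines $\sim_I$ and $z+e\sim_I z$ for $e\in I$, this gives $x\sim_I z_1\sim_I z_2\sim_I y$, hence $x\sim_I y$. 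This fullness bookkeeping is where the real work sits.

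With $F/f(I)\cong E/I\cong\Gamma(H,u)$ established, the defining properties of a (weak) lexicographic ideal transfer immediately. Since $(H,u)$ is an antilattice po-group, $\Gamma(H,u)$ is an antilattice, so $F/f(I)$ is an antilattice and $f(I)$ is prime. For strictness, if $\bar x/f(I)<\bar y/f(I)$ then, writing $\bar x=f(x)$, $\bar y=f(y)$ and transporting across the order isomorphism $E/I\cong F/f(I)$, we get $x/I<y/I$, whence $x<y$ by strictness of $I$, so $f(x)\le f(y)$; and $f(x)\ne f(y)$ as the classes differ, giving $\bar x<\bar y$. For retractiveness, if $\delta_I$ splits $\pi_I$ then $\delta:=f\circ\delta_I\circ\Phi^{-1}$ splits $\pi_{f(I)}$, where $\Phi\colon E/I\to F/f(I)$ is the isomorphism above and $\pi_{f(I)}\circ f=\Phi\circ\pi_I$. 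Properness $f(I)\ne F$ follows because $1=f(i)$ with $i\in I$ would force $1\in I$ by (a), contradicting $I\ne E$; in the lexicographic case $\Ker(f)\subsetneq I$ additionally yields $f(I)\ne\{0\}$. Thus $f(I)$ is a (weak) lexicographic ideal of $F$.

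Finally I would apply Theorem~\ref{th:6.5} (resp. Theorem~\ref{th:6.6}) to $F$ and $f(I)$, obtaining an antilattice Abelian unital po-group $(H_1,u_1)$ with \RDP\ and a directed Abelian po-group $G_1$ with \RDP\ such that $F/f(I)\cong\Gamma(H_1,u_1)$, $\langle f(I)\rangle\cong\Gamma(\mathbb Z\lex G_1,(1,0))$ and $F\cong\Gamma(H_1\lex G_1,(u_1,0))$. Because $\Gamma(H_1,u_1)\cong F/f(I)\cong E/I\cong\Gamma(H,u)$, the uniqueness in Theorem~\ref{th:2.1} forces $(H_1,u_1)\cong(H,u)$, so $(H_1,u_1)$ is a homomorphic (indeed isomorphic) image of $(H,u)$. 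For $G_1$, under the representations $\langle I\rangle\cong\Gamma(\mathbb Z\lex G,(1,0))$ and $\langle f(I)\rangle\cong\Gamma(\mathbb Z\lex G_1,(1,0))$ the ideals $I$ and $f(I)$ become the positive cones $G^+$ and $G_1^+$ (addition being total on each, as they are strict; Propositions~\ref{pr:3.2}--\ref{pr:3.3}), and the restriction $f|_I\colon I\to f(I)$ is a surjective homomorphism of these cones; by the universal property of the group of differences it extends to a surjective po-group homomorphism $G\to G_1$. Hence $G_1$ is a homomorphic image of $G$, which finishes the proof.
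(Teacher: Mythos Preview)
Your argument is correct and follows a genuinely different route from the paper. The paper verifies each defining property of a (weak) lexicographic ideal for $f(I)$ separately: it shows $f(I)$ is an ideal, then proves $f(I_0(a))=I_0(f(a))$ and uses this to check primeness directly from the definition, then checks strictness and retractiveness by hand, and only at the end introduces the surjection $\hat f\colon E/I\to F/f(I)$, $x/I\mapsto f(x)/f(I)$, to conclude that $(H_1,u_1)$ and $G_1$ are homomorphic images of $(H,u)$ and $G$. You instead make the map $\hat f$ (your $\Phi$) the centerpiece, prove it is an \emph{isomorphism} using $\Ker(f)\subseteq I$ and the lifting clause of Lemma~\ref{le:6.8}, and then read off primeness, strictness and retractiveness of $f(I)$ simultaneously from the corresponding properties of $I$ via $E/I\cong F/f(I)$. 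This is cleaner and in fact yields a sharper conclusion: your argument gives $(H_1,u_1)\cong(H,u)$, not merely a homomorphic image.

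One trade-off is worth noting. The paper deliberately proves strictness and retractiveness of $f(I)$ \emph{without} invoking $\Ker(f)\subseteq I$ (it remarks after the proof that this hypothesis is used only for primeness and for $f(I)\ne\{0\}$), and this is exactly what allows the immediate corollary Theorem~\ref{th:6.10}, where the inclusion $\Ker(f)\subseteq I$ is dropped. Your approach, which routes everything through the isomorphism $E/I\cong F/f(I)$ (whose proof uses $\Ker(f)\subseteq I$ essentially), does not directly yield that corollary; you would need to supplement it with a separate direct check of strictness and retractiveness along the paper's lines to recover Theorem~\ref{th:6.10}.
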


\begin{proof}
Let $I$ be a lexicographic ideal of $E$ such that $E$ is $I$-representable.

Take an effect algebra $F$ with RDP and let $f:E\to F$ be a surjective homomorphism with the $\sim$-property. By Lemma \ref{le:6.8}, we can show that $f(I):=\{f(x): x\in I\}$ is an ideal of $F$. Indeed, (i) let $f(x)\le f(y)\in f(I)$. There exists $x_1\in E$ with $f(x_1)=f(x)$ and $x_1 \le y$ and there exist $a,b\in \Ker(f)$ with $a\le x_1$, $b\le x$ such that $x_1-a=x-b$. Then $x_1=(x-b)+a\le y$ which yields $x_1\in I$ and $f(x)=f(x_1)\in f(I)$.
(ii) Let $f(x)+f(y)$ be defined in $F$ for $x,y \in I$. By Lemma \ref{le:6.8}, there are $x_1,y_1\in E$ such that $f(x_1)=f(x)$, $f(y_1)=f(y)$ and $x_1+y_1$ is defined in $E$. Then there are $a,b \in \Ker(f)$ with $a\le x_1$, $b\le x$ such that $x_1-a=x-b$. Then $x-b \in I$ as well as $x_1-a\in I$. In the same way, there are $c,d \in \Ker(f)$ with $c\le y_1$, $d\le y$ such that $y_1-c=y-d$ giving $y_1-d\in I$ and $(x_1-a)+(y_1-c)$ exists in $I$ so that $f(x)+f(y)=f(x_1-a)+f(y_1-c)\in f(I)$.

We assert that $f(I_0(a))=I_0(f(a))$. In fact, due to definition of $I_0(a)$, see (2.1), we see that $f(I_0(a))\subseteq I_0(f(a))$. Conversely, let $f(x)\in I_0(f(a))$. We have $f(x)=f(a_1)+\cdots +f(a_n)$ for $f(a_i)\le f(a)$. By Lemma \ref{le:6.8}, we can assume that $a_i \le a$ and $a_1+\cdots +a_n $ is defined in $E$. Then $f(x)=f(a_1+\cdots+a_n)$. There is $x_1 \le x$ such that $f(x_1)=f(x)$ so that $x_1 \in I_0(a)$ and $f(x)=f(x_1)\in f(I_0(a)).$

Now we show that $f(I)$ is a prime ideal. To show that, we have to prove that if $I_0(f(a))\cap I_0(f(b)) \subseteq f(I)$, then $f(a)\in f(I)$ or $f(b)\in f(I)$. We have $f(I_0(a)\cap I_0(b))\subseteq f(I_0(a))\cap f(I_0(b))= I_0(f(a))\cap I_0(f(b))\subseteq f(I)$.

Choose $x \in I_0(a)\cap I_0(b),$ then $f(x)=f(y)$ for some $y \in I$. There is $x_1\le y$ such that $f(x_1)=f(y)$. There are $e,g\in \Ker(f)$ such that $e\le x_1$, $g\le x$ and $x_1-e=x-g$. On the other hand, there is $z \in \Ker(f)$ with $z \le y$ such that $x_1=y-z$. Then $z \in I$ and $(x-g) +e = x_1 =y-z$ and $x = ((y-z)-e)+g \in I$ while $g \in \Ker(f)\subseteq I$ which entails $I_0(a)\cap I_0(b)\subseteq I$. Since $I$ is prime, $a\in I$ or $b \in I$ so that $f(a)\in f(I)$ or $f(b)\in f(I)$ proving that $f(I)$ is prime.

Now we show that $f(I)$ is strict. Let $f(x)/f(I)< f(y)/f(I)$. There exists $x_1\in I$ such that $f(x_1)/f(I)=f(x)/f(I)$ and $f(x_1)< f(y)$. Then there exists $x_2 \in E$ such that $f(x_2)=f(x_1)$ and $x_2<y$. But $x/I=x_2/I <y/I$ which yields $x<y$.

Finally, we show that (i) $f(I)\ne F$ and (ii) $f(I)\ne \{0\}$ whenever $I$ is lexicographic.  Indeed, if $1 \in f(I)$, there is an element $x\in I$ such that $f(1)=f(x)=1$. There is an element $e\in \Ker(f)\subseteq I$ such that $1-e\le x$, so that $1=(1-e)+e \in I$, absurd. Now if $f(x)=0$ for every $x\in I$. There is an element $x\in I\setminus \Ker(f)$. Then $f(x)=f(0)$. We can find an element $e\in \Ker(f)$ such that $0\le x-e\le 0$, i.e. $x=e$, absurd.

We claim that $f(I)$ is a retractive ideal. Let $\pi_I: E\to E/I$ be the canonical projection and let $\delta_I:E/I\to E$ be a homomorphism such that $\pi_I \circ \delta_I = id_{E/I}$. Let $E_0=\delta_I(E/I)$ be a subalgebra of $E$ that is isomorphic to $E/I$. If we define $\hat f: E/I \to F/f(I)$  by $\hat f(x/I)= f(x)/f(I)$, then $\hat f$ is a well-defined surjective homomorphism such that $\hat f \circ \pi_I= \pi_{f(I)}\circ f$. Set $F_0=f(E_0)$ and let $f_{E_0}$ be the restriction of $f$ onto $E_0$. We define $\delta_{f(I)}: F/f(I) \to F$ via $\delta_{f(I)}(f(x)/f(I)):= f_{E_0}(\delta_I(x/I))$; then $\delta_{f(I)}$ is a well-defined homomorphism such that $\delta_{f(I)}(F/f(I))=F_0$ and $f_{E_0} \circ \delta_I = \delta_{f(I)} \circ \hat f$. Hence,

\begin{eqnarray*}
\pi_{f(I)}\circ \delta_{f(I)}(f(x)/f(I)) &=& \pi_{f(I)} \circ f_{E_0} \circ \delta_I(x/I)\\
&=& \hat f \circ \pi_I \circ \delta_I(x/I)= \hat f(x/I)\\
&=& f(x)/f(I)
\end{eqnarray*}
that proves $f(I)$ is a retractive ideal of $E$.

We have just proved that $f(I)$ is a (weak) lexicographic ideal of $F$. By Theorems \ref{th:6.5}--\ref{th:6.6}, there are Abelian unital po-groups $(H,u)$ and $(H_1,u_1)$ with RDP  such $E/I\cong \Gamma(H,u)$ and $F/f(I)\cong \Gamma(H_1\lex G_1, (u_1,0))$, and there are Abelian directed po-groups $G$ and $G_1$ with RDP and with $\langle I \rangle \cong \Gamma(\mathbb Z\lex G,(1,0))$ and $\langle f(I)\rangle \cong \Gamma(\mathbb Z \lex G_1,(1,0))$ such that $E \cong \Gamma(H\lex G,(u,0))$ and $F=\Gamma(H_1\lex G_1,(u_1,0)$. We define a mapping $f_I: E/I \to F/f(I)$ by $f_I(x/I):=f(x)/f(I)$ $(x \in E)$; it is a well-defined mapping because if $x/I=y/I$, there are $e,g \in I$ with $e\le x$, $g \le y$ and $x-e=y-g$. Then $f(x)-f(e)=f(y)-f(g)$ and $f(e),f(g)\in f(I)$, $f(e)\le f(x)$, $f(g)\le f(y)$ proving $f(x)/f(I)=f(y)/f(I)$. Then $f_I$ is a surjective homomorphism, so that $H_1$ is a surjective homomorphism of $H$. In addition, $I$ and $\phi(I)$ are associative cancellative semigroups satisfying conditions of Birkhoff's Theorem  \cite[Thm XIV.2.1]{Bir}, \cite[Thm II.4]{Fuc}. Then they are positive cones of po-groups $G$ and $G_1$ $(f(G^+)=f(I)=G_1^+)$, respectively, consequently, $G_1$ is a homomorphic image of $G$.
\end{proof}

We note that the condition ``$\Ker(f) \subseteq I$" in Theorem {\rm \ref{th:6.9}} was used only to show that $f(I)$ is a prime ideal of $F$ and  that $f(I)\ne \{0\}$ whenever $\Ker(f)\subset f(I)$. Hence, if we need only that $I$ is strict and retractive, we have the following result.

\begin{theorem}\label{th:6.10}
Let $E=\Gamma(H\lex G,(u,0))$, where $(H,u)$ is an Abelian unital po-group with \RDP\, and $G$ is an Abelian directed po-group with \RDP, and let $F$ be an effect algebra with \RDP. If $f:E\to F$ is a surjective homomorphism with the $\sim$-property, then $F\cong \Gamma(H_1\lex G_1,(u_1,0))$, where $(H_1,u_1)$ and $G_1$ is an Abelian unital po-group and an Abelian directed po-group, respectively, that are homomorphic images of $(H,u)$ and $G$, respectively.
\end{theorem}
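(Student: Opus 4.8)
The plan is to follow the proof of Theorem~\ref{th:6.9} essentially verbatim, discarding the two places where the hypothesis ``$\Ker(f)\subseteq I$'' was invoked (primeness of $f(I)$ and $f(I)\neq\{0\}$), and replacing the one remaining use of it — the verification that $f(I)\neq F$ — by an argument through infinitesimals. Since $E\cong\Gamma(H\lex G,(u,0))$, the converse part of Theorem~\ref{th:6.7} furnishes a strict and retractive ideal $I$ of $E$ with $I\neq E$, $E/I\cong\Gamma(H,u)$ and $\langle I\rangle\cong\Gamma(\mathbb{Z}\lex G,(1,0))$; concretely $I=\{0\}\times G^+$. I fix this $I$ and aim to show that $f(I)$ is again a strict and retractive ideal of $F$ with $f(I)\neq F$, after which Theorem~\ref{th:6.7} applied to $F$ delivers the representation $F\cong\Gamma(H_1\lex G_1,(u_1,0))$.

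First I would check that $f(I)$ is an ideal of $F$ and that it is strict and retractive, reusing the corresponding paragraphs of the proof of Theorem~\ref{th:6.9}. The ideal property and strictness rely only on Lemma~\ref{le:6.8} and on the $\sim$-property of $f$, and the retraction $\delta_{f(I)}\colon F/f(I)\to F$ is manufactured from the retraction $\delta_I$ of $I$: setting $\delta_{f(I)}(f(x)/f(I)):=f\bigl(\delta_I(x/I)\bigr)$, with $f$ restricted to the subalgebra $\delta_I(E/I)$, and running the same diagram chase $\pi_{f(I)}\circ\delta_{f(I)}=\mathrm{id}_{F/f(I)}$. None of these steps refers to $\Ker(f)\subseteq I$; the well-definedness of the induced map $x/I\mapsto f(x)/f(I)$ uses only that $x/I=y/I$ yields $e,g\in I$ with $e\le x$, $g\le y$, $x-e=y-g$, whence $f(e),f(g)\in f(I)$.

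The one step that must genuinely be redone is $f(I)\neq F$. Here I would argue via \emph{infinitesimals}. By Proposition~\ref{pr:3.3} every strict ideal lies in $\Infinit(E)$, so $I\subseteq\Infinit(E)$. A homomorphism preserves infinitesimals: if $x\in\Infinit(E)$ then $nx$ is defined for all $n\ge 1$, so $nf(x)=f(nx)$ is defined for all $n\ge 1$, i.e.\ $f(x)\in\Infinit(F)$. Hence $f(I)\subseteq\Infinit(F)$, while $1\notin\Infinit(F)$ for the nondegenerate $F$ (if $0=1$ in $F$ the assertion is trivial). Therefore $1\notin f(I)$ and $f(I)\neq F$; this replaces the computation ``$1=(1-e)+e\in I$'' of Theorem~\ref{th:6.9}, which had exploited $e\in\Ker(f)\subseteq I$.

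With $f(I)$ now a strict and retractive proper ideal of the RDP effect algebra $F$, Theorem~\ref{th:6.7} yields an Abelian unital po-group $(H_1,u_1)$ with RDP and an Abelian directed po-group $G_1$ with RDP such that $F/f(I)\cong\Gamma(H_1,u_1)$, $\langle f(I)\rangle\cong\Gamma(\mathbb{Z}\lex G_1,(1,0))$ and $F\cong\Gamma(H_1\lex G_1,(u_1,0))$. It then remains to identify $H_1$ and $G_1$ as homomorphic images of $H$ and $G$, which is precisely the closing paragraph of the proof of Theorem~\ref{th:6.9}: the assignment $x/I\mapsto f(x)/f(I)$ is a well-defined surjective homomorphism $E/I\to F/f(I)$, i.e.\ a surjection $\Gamma(H,u)\to\Gamma(H_1,u_1)$, which lifts by $\Gamma$-functoriality to a surjective homomorphism $H\to H_1$; likewise $f$ restricts to a surjection of positive cones $I\to f(I)$ of the Birkhoff po-groups $G$ and $G_1$, giving a surjective po-group homomorphism $G\to G_1$. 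I expect the only genuine (and modest) obstacle to be exactly these two points — the infinitesimal argument for $f(I)\neq F$ and the well-definedness of the induced maps on the quotients and on the cones — the moral being that it is the $\sim$-property, rather than any inclusion of kernels, that carries the weight throughout.
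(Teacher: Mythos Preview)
Your proposal is correct and follows the same route as the paper: invoke Theorem~\ref{th:6.7} to obtain the strict retractive ideal $I=\{0\}\times G^+$, transport it along $f$ using the relevant paragraphs of the proof of Theorem~\ref{th:6.9}, and apply Theorem~\ref{th:6.7} once more to $F$.

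Your treatment is in fact slightly more careful than the paper's. The remark preceding Theorem~\ref{th:6.10} asserts that the hypothesis $\Ker(f)\subseteq I$ in Theorem~\ref{th:6.9} was used \emph{only} for primeness of $f(I)$ and for $f(I)\neq\{0\}$; but the paper's argument for $f(I)\neq F$ also relies on it (the step ``$1=(1-e)+e\in I$'' needs $e\in\Ker(f)\subseteq I$). The paper's proof of Theorem~\ref{th:6.10} then simply cites Theorem~\ref{th:6.7}, which requires $f(I)\neq F$, without revisiting this point. Your infinitesimal argument --- $I\subseteq\Infinit(E)$ by Proposition~\ref{pr:3.3}, homomorphisms preserve infinitesimals, and $1\notin\Infinit(F)$ --- cleanly fills this small gap and is independent of any kernel inclusion.
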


\begin{proof}
Due to Theorem \ref{th:6.7}, there is a strict and retractive ideal $I$ of $E$ such that $E/I=\Gamma(H,u)$ and $\langle I \rangle \cong \Gamma(\mathbb Z \lex G,(1,0)).$ Using the proof of Theorem \ref{th:6.9}, we see that $f(I)$ is a strict and retractive ideal of $F$. Applying again Theorem \ref{th:6.7}, we have the desired statement.
\end{proof}


We say that an effect algebra $E$ is a {\it subdirect product} of a system of effect algebras $(E_i: i\in I)$ if there is an injective homomorphism $f: E \to \prod_{i\in I}E_i$ such that $f(a)\le f(b)$ if and only if $a\le b$ ($a,b \in E$), and for every $j \in J$, $\pi_j \circ f$ is a surjective homomorphism from $E$ onto $E_j$, where $\pi_j$ is the projection of $\prod_{i\in I}E_i$ onto $E_j$.

The following subdirect representation theorem of effect algebras with RDP was proved in \cite[Thm 7.2]{177}:

\begin{theorem}\label{th:6.11}
Every effect algebra $E$ with the \RDP\, is a subdirect product of antilattice effect algebras with the \RDP, and all existing meets and joins in $E$ are preserved in the subdirect product.
\end{theorem}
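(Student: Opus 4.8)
The plan is to embed $E$ into the product of all of its prime quotients. By Theorem \ref{th:2.1} I may write $E=\Gamma(G,u)$ with $G$ an Abelian interpolation group, and I let $\mathcal P$ denote the family of all prime ideals of $E$. Define $f\colon E\to \prod_{P\in\mathcal P}E/P$ by $f(x)=(x/P)_{P\in\mathcal P}$. Since $E$ has \RDP, every ideal of $E$ is a Riesz ideal, so each quotient $E/P$ is again an effect algebra with \RDP\ by \cite[Prop 4.1]{185}, and by \cite[Prop 6.5]{177} each $E/P$ is an antilattice. The map $f$ is a homomorphism of effect algebras, and each composite $\pi_P\circ f$ is just the canonical surjection $E\to E/P$, so every projection is onto. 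Hence it remains only to prove that $f$ reflects the order (whence $f$ is injective) and that $f$ preserves all existing meets and joins.

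The decisive step is a separation lemma: if $a\not\le b$ in $E$, then $a/P\not\le b/P$ for some $P\in\mathcal P$. I would prove it by a Zorn argument. Consider the family of ideals $I$ of $E$ with $a/I\not\le b/I$; it is nonempty, since $E/\{0\}\cong E$ shows $\{0\}$ belongs to it, and it is closed under unions of chains because the relation $a/I\le b/I$ is witnessed by finitely many elements of $I$, which already lie in a single member of the chain. Let $P$ be a maximal such ideal. The heart of the proof is to show that $P$ is prime: assuming $I_0(x)\cap I_0(y)\subseteq P$ with $x,y\notin P$, the ideals $J_x$ and $J_y$ generated by $P\cup\{x\}$ and $P\cup\{y\}$ properly contain $P$, so by maximality $a/J_x\le b/J_x$ and $a/J_y\le b/J_y$; an interpolation (\RDP) refinement then recombines these two witnesses into witnesses modulo $P$, forcing $a/P\le b/P$ and contradicting the choice of $P$. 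This recombination, which is exactly where \RDP\ is indispensable, is the main obstacle of the whole argument. Granting the lemma, whenever $a\not\le b$ some coordinate of $f$ separates them, so $f(a)\le f(b)$ forces $a\le b$; thus $f$ is an order embedding and, in particular, injective.

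Finally I treat preservation of meets and joins, using that meets in the product are computed coordinatewise. Suppose $m=a\wedge b$ exists in $E$. Translating by $-m$ inside $G$ (an order isomorphism) gives $(a-m)\wedge(b-m)=0$ with $a-m,\,b-m\in E$. Writing $x=a-m$ and $y=b-m$, every element of $I_0(x)\cap I_0(y)$ is positive and, by the description $(2.1)$, dominated both by a multiple of $x$ and by a multiple of $y$; iterating the \RDP\ decomposition and using $x\wedge y=0$ shows that each such element is $0$, so $I_0(x)\cap I_0(y)=\{0\}\subseteq P$. Primeness then yields $x\in P$ or $y\in P$ for every $P\in\mathcal P$, that is $a/P=m/P$ or $b/P=m/P$; in either case $a/P$ and $b/P$ are comparable in the antilattice $E/P$ and $m/P=a/P\wedge b/P$ there. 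Coordinatewise this says $f(m)=f(a)\wedge f(b)$, so $f$ preserves the meet. The case of an existing join $a\vee b$ is dual, obtained by applying the meet case to $a^-$ and $b^-$ through the order-reversing involution $z\mapsto z^-$, which $f$ respects. Together with the separation lemma, this exhibits $f$ as the required subdirect embedding of $E$ into antilattice effect algebras with \RDP, preserving all existing meets and joins.
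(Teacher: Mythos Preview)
The paper does not give its own proof of this theorem; it is quoted from \cite[Thm~7.2]{177}, with only the consequence~(6.1) recorded. Your sketch is the standard route to such a subdirect representation and is correct in outline.

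The one step you yourself flag as the ``main obstacle''---showing that the Zorn-maximal ideal $P$ is prime---deserves to be written out, since everything hinges on it. Passing to the interpolation group $G$ with $E=\Gamma(G,u)$ and to the corresponding o-ideals (cf.\ \cite[Thm~6.11]{177}), the quotient order reads $a/I\le b/I$ iff $a\le b+i$ in $G$ for some $i$ in the o-ideal $\tilde I$ generated by $I$. With \RDP\ one has $J_x=P+I_0(x)$ and $J_y=P+I_0(y)$; a single \RDP\ refinement of $p_1+w_1=p_2+w_2$ (with $p_i\in P$, $w_1\in I_0(x)$, $w_2\in I_0(y)$) puts the cross term into $I_0(x)\cap I_0(y)\subseteq P$, whence $J_x\cap J_y=P$ and therefore $\tilde J_x\cap\tilde J_y=\tilde P$. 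Now from $a\le b+j_1$ and $a\le b+j_2$ with $j_1\in\tilde J_x$, $j_2\in\tilde J_y$, interpolate between $\{a-b,0\}$ and $\{j_1,j_2\}$ to obtain $c$ with $a-b\le c$, $0\le c\le j_1,j_2$; then $c\in\tilde J_x\cap\tilde J_y=\tilde P$ and $a\le b+c$ yields $a/P\le b/P$, contradicting the choice of $P$. This completes the primeness argument.

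Your treatment of meet and join preservation is correct. The only point worth making explicit is that the infimum $(a-m)\wedge(b-m)=0$ is taken in $E$, not in $G$; this suffices because every witness used in the $I_0(x)\cap I_0(y)$ computation already lies in $E$.
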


The proof of this theorem shoved that $\mathcal P(E)$ has a lot of proper prime ideals of $E$ so that
$$\bigcap \{P: P\in \mathcal P(E)\setminus\{E\}\}=\{0\}. \eqno(6.1)
$$

Combining Theorem \ref{th:6.11} with the previous representation theorems, we can prove the following subdirect product representability result:

\begin{theorem}\label{th:6.12}
Let $E=\Gamma(H\lex G,(u,0))$, where $(H,u)$ is an Abelian unital po-group with \RDP\, and $G$ is an Abelian directed po-group with \RDP. Then $E$ is a subdirect product of a family $(\Gamma(H_i\lex G_i,(u_i,0)))$ of antilattice effect algebras with \RDP, where each $(H_i,u_i)$ is a unital Abelian po-group with \RDP\, and $G_i$ is a directed Abelian po-group with \RDP.
\end{theorem}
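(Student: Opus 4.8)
The plan is to bolt together the two machines already built in this section: the general subdirect representation of Theorem \ref{th:6.11} supplies the factors, and the transfer result of Theorem \ref{th:6.10} identifies each factor as a lexicographic effect algebra. First I would apply Theorem \ref{th:6.11} to $E$. Since $E$ has RDP, it is a subdirect product of the antilattice effect algebras $E/P$, where $P$ ranges over the proper prime ideals in $\mathcal P(E)\setminus\{E\}$; concretely, the diagonal map $a\mapsto (a/P)_P$ into $\prod_P E/P$ is injective and order-reflecting by (6.1), and each coordinate projection $\pi_P\colon E\to E/P$ is a surjective homomorphism. Each quotient $E/P$ is an antilattice precisely because $P$ is prime (a proper ideal $P$ of an effect algebra with RDP is prime iff $E/P$ is an antilattice, \cite[Prop 6.5]{177}), and $E/P$ inherits RDP since $P$ is a Riesz ideal, $E$ having RDP.

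Next, for each such $P$, I would verify the hypotheses of Theorem \ref{th:6.10} for $F=E/P$ and $f=\pi_P$. The map $\pi_P$ is surjective by construction, and because $E$ satisfies RDP, the ideal $P$ is a Riesz ideal, whence $\pi_P$ has the $\sim$-property by \cite[Prop 4.1]{185}. These are exactly the standing assumptions of Theorem \ref{th:6.10}, so it applies and yields $E/P\cong \Gamma(H_P\lex G_P,(u_P,0))$, where $(H_P,u_P)$ is an Abelian unital po-group with RDP that is a homomorphic image of $(H,u)$, and $G_P$ is a directed Abelian po-group with RDP that is a homomorphic image of $G$.

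Combining the two steps, $E$ is a subdirect product of the family $\bigl(\Gamma(H_P\lex G_P,(u_P,0)) : P\in \mathcal P(E)\setminus\{E\}\bigr)$, each member of which is, by the previous paragraph, an antilattice lexicographic effect algebra with RDP of exactly the asserted form. This is the desired conclusion.

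The only genuinely delicate point is the passage from the abstract quotients $E/P$ furnished by Theorem \ref{th:6.11} to their concrete lexicographic presentations; this is precisely where the $\sim$-property of $\pi_P$ is indispensable, since Theorem \ref{th:6.10} cannot be invoked without it. Everything else is bookkeeping already licensed by RDP: the diagonal embedding and its order-reflection come straight from (6.1), primeness of each $P$ delivers the antilattice property of the factor, and the automatic Riesz-ideal structure supplies both the RDP of $E/P$ and the $\sim$-property of $\pi_P$.
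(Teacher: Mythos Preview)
Your proposal is correct and follows essentially the same route as the paper: use Theorem~\ref{th:6.11} (via (6.1)) to write $E$ as a subdirect product of the antilattice quotients $E/P$ for $P\in\mathcal P(E)\setminus\{E\}$, and then apply Theorem~\ref{th:6.10} to each canonical projection $\pi_P$ (surjective with the $\sim$-property because every ideal of an RDP effect algebra is Riesz) to recognize each factor as $\Gamma(H_P\lex G_P,(u_P,0))$. Your write-up is in fact more explicit than the paper's about why $\pi_P$ has the $\sim$-property and why $E/P$ is an antilattice, but the strategy is identical.
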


\begin{proof}
By Theorem \ref{th:6.7}, the ideal $I=\{0\}\times G^+$ is strict and retractive. Let $P$ be a prime proper ideal of $E$; due to (6.1), we can find a lot of prime ideals. The mapping $f_P: E \to E/P$ is a surjective and has the $\sim$-property. In addition, $E/P$ is an antilattice. By Theorem \ref{th:6.11}, $E/P\cong \Gamma(H_P\lex G_P,(u_p,0))$ for some Abelian unital po-group $(H_P,u_P)$ with RDP and for some Abelian directed po-group $G_P$ with RDP. Due to (6.1), $E$ is a subdirect product of the family $(\Gamma(H_P\lex G_P,(u_P,0)): P \in \mathcal P(E)\setminus \{E\})$.
\end{proof}

\section{Conclusion}

In the paper we have found conditions when an effect algebra with RDP is of the form of lexicographic product $\Gamma(H\lex G,(u,0))$, where $(H,u)$ is an Abelian unital po-group with RDP and $G$ is an Abelian directed po-group with RDP. We have shown that the crucial notion was a strong $(H,u)$-perfect effect algebra which can be split into slices $(E_t: t \in  [0,u]_H)$ indexed by the elements of $\Gamma(H,u)$ with some natural properties. This notion  established in Theorem \ref{th:5.1} a representation theorem and allowed us to show that the category of strong $(H,u)$-perfect effect algebras is categorically equivalent to the category od directed po-groups with RDP, Theorem \ref{th:5.5}.

Another important notion was a retractive ideal a lexicographic ideal. Also in this case we have proved representation theorems of local effect algebras and cases when $(H,u)$ is an antilattice po-group, see Theorems \ref{th:6.4}--\ref{th:6.5}. Finally, we have showed that every lexicographic effect is a subdirect product of antilattice lexicographic effect algebras with RDP, Theorem \ref{th:6.12}.

The paper showed an important class of effect algebras connected with lexicographic product of two Abelian po-groups with RDP. It solved some interesting questions and stimulated a new research on this topic, mainly on lexicographic product of pseudo effect algebras.


\begin{thebibliography}{DvGr2}

\bibitem[Bir]{Bir}
G. Birkhoff, {\it ``Lattice Theory"}, Amer.
Math. Soc. Coll. Publ., Vol. {\bf 25},
Providence, Rhode Island, 1967.

\bibitem[CiTo]{CiTo}
R. Cignoli, A. Torrens, {\it Retractive MV-algebras}, Mathware Soft Comput. {\bf 2} (1995), 157--165.

\bibitem[DFL]{DFL}
D. Diaconescu, T. Flaminio, I. Leu\c{s}tean, {\it Lexicographic MV-algebras and lexicographic states}, Fuzzy Sets and System {\bf 244}  (2014), 63--85. DOI         10.1016/j.fss.2014.02.010

\bibitem[DiLe1]{DiLe1}
A. Di Nola, A. Lettieri, {\it Perfect MV-algebras are categorical
equivalent to abelian $\ell$-groups}, Studia Logica {\bf 53} (1994),
417--432.

\bibitem[DiLe2]{DiLe2}
A. Di Nola, A. Lettieri, {\it Coproduct MV-algebras, nonstandard reals and Riesz spaces}, J. Algebra {\bf 185} (1996), 605--620.

\bibitem[Dvu0]{151}
A. Dvure\v censkij,    {\it  Pseudo MV-algebras
are intervals in $\ell$-groups}, J. Austral. Math. Soc. {\bf 72} (2002), 427--445.

\bibitem[Dvu1]{185}
A. Dvure\v censkij,    {\it Ideals of pseudo-effect algebras
and their applications}, Tatra Mt. Math. Publ.    {\bf 27} (2003),
45--65.

\bibitem[Dvu2]{177}
A. Dvure\v censkij, {\it  Perfect effect algebras are
categorically equivalent with Abelian interpolation po-groups},
J. Austral. Math. Soc. {\bf 82} (2007), 183--207.

\bibitem[Dvu3]{Dv08}   A. Dvure\v{c}enskij,  {\it On $n$-perfect GMV-algebras},
J. Algebra {\bf 319} (2008), 4921--4946.


\bibitem[Dvu4]{264}
A. Dvure\v censkij,    {\it $\mathbb H$-perfect pseudo MV-algebras and their representations},  Math. Slovaca   {\bf } http://arxiv.org/abs/1304.0743

\bibitem[Dvu5]{275}
A. Dvure\v censkij, {\it Pseudo MV-algebras and lexicographic product}, http://arxiv.org/abs/1406.2339

\bibitem[DvKo]{DvKo}
A. Dvure\v censkij, M.  Kola\v{r}\'\i k, {\it Lexicographic product vs $\mathbb Q$-perfect and $\mathbb H$-perfect pseudo effect algebras}, Soft Computing {\bf 17} (2014), 1041--1053.  DOI: 10.1007/s00500-014-1228-6

\bibitem[DvKr]{DvKr}
A. Dvure\v censkij, J. Kr\v{n}\'avek,  {\it The lexicographic product of po-groups and $n$-perfect pseudo effect algebras}, Inter. J. Theor. Phys. {\bf 52} (2013), 2760--2772. DOI:10.1007/s10773-013-1568-5

\bibitem[DvPu]{DvPu}
A. Dvure\v censkij, S. Pulmannov\'a, {\it ``New Trends in Quantum
Structures"}, Kluwer Acad. Publ., Dordrecht, Ister Science,
Bratislava, 2000.

\bibitem[DvVe1]{DvVe1} A. Dvure\v censkij, T. Vetterlein,   {\it
Pseudoeffect algebras. I. Basic properties},  Inter. J. Theor.
Phys. {\bf 40} (2001), 685--701.

\bibitem[DvVe2]{DvVe2} A. Dvure\v censkij, T. Vetterlein,   {\it
Pseudoeffect algebras. II. Group representation}, Inter. J. Theor.
Phys. {\bf 40} (2001), 703--726.

\bibitem[DvVe3]{DvVe3}
A. Dvure\v censkij, T. Vetterlein,   {\it
Congruences and states on pseudo-effect algebras,} Found. Phys.
Letters {\bf 14} (2001), 425--446.

\bibitem[DvVe4]{DvVe4}
A. Dvure\v censkij, T. Vetterlein,    {\it
Archimedeanicity and the MacNeill completion of pseudoeffect
algebras and po-groups}, Algebra Universalis {\bf 50} (2003),
207--230.



\bibitem[DXY]{DXY}
A. Dvure\v censkij, Y. Xie, Aili  Yang,  {\it Discrete $(n+1)$-valued states and $n$-perfect  pseudo-effect algebras,} Soft Computing {\bf 17} (2013), 1537–-1552. DOI: 10.1007/s00500-013-1001-2

\bibitem[FoBe]{FoBe}  D.J. Foulis, M.K. Bennett,
{\it  Effect algebras and unsharp quantum logics}, Found. Phys. {\bf
24} (1994), 1331--1352.

\bibitem[Fuc]{Fuc} L. Fuchs, {\it ``Partially Ordered Algebraic Systems"}, Pergamon Press, Oxford-New York, 1963.

\bibitem[GeIo]{GeIo} G. Georgescu, A. Iorgulescu, {\it
Pseudo-MV algebras}, Multiple Val. Logic {\bf 6} (2001), 95--135.


\bibitem[Gla]{Gla}
A.M.W. Glass {\it ``Partially Ordered Groups"}, World Scientific,
Singapore, 1999.

\bibitem[Go]{Goo}
 K.R. Goodearl,
{\it ``Partially Ordered Abelian Groups with Interpolation"},
 Math. Surveys and Monographs No. 20, Amer. Math. Soc.,
 Providence, Rhode Island, 1986.

\bibitem[LuZa]{LuZa}
W.A.J. Luxemburg, A.C. Zaanen,
{\it ``Riesz Spaces I"},
 North-Holland, Amsterdam, London, 1971.


\bibitem[MaL]{MaL}
S. Mac Lane, {\it ``Categories for
the Working Mathematician"}, Springer-Verlag,   New York, Heidelberg,
Berlin, 1971.

\bibitem[Mun]{Mun}
D. Mundici, {\it   Interpretation of AF $C^*$-algebras in \L
ukasiewicz sentential calculus}, J. Funct. Anal. {\bf 65} (1986),
15--63.

\bibitem[Pul]{Pul}
S. Pulmannov\'a,   {\it Compatibility and decomposition of effects}, J. Math. Phys. {\bf 43} (2002), 2817--2830.

\bibitem[Rav]{Rav} K. Ravindran,
{\it  On a structure theory of effect algebras}, PhD thesis, Kansas
State Univ., Manhattan, Kansas, 1996.

\end{thebibliography}
\end{document}